\Crefname{paragraph}{Section}{Sections}
\newcommand{\ensemblenombre}[1]{\mathbb{#1}}
\newcommand{\R}{} % Probleme LateXML
\renewcommand{\R}{\ensemblenombre{R}}
\newcommand\D{\displaystyle}
\newcommand{\norme}[1]{\left\lVert#1\right\rVert}
\newcommand{\compemb}{\mathrel{\mathpalette\comp@emb\relax}}
\newcommand{\comp@emb}[2]{%
  \vcenter{%
    \offinterlineskip\m@th
    \ialign{$#1##$\cr\hookrightarrow\cr\noalign{\vskip0.5pt}\hookrightarrow\cr}%
  }%
}
\theoremstyle{plain} 
\newtheorem{prop}{Proposition}[section] 
\newtheorem{thm}[prop]{Theorem}
\newtheorem{lem}[prop]{Lemma}
\theoremstyle{definition}
\newtheorem{rmk}[prop]{Remark}
\newtheorem{ass}[prop]{Assumption}
\def\dx{\,\textnormal{d}x}
\def\dt{\textnormal{d}t}
\def\d{\,\textnormal{d}}
\def\XXint#1#2#3{{\setbox0=\hbox{$#1{#2#3}{\int}$ }
\vcenter{\hbox{$#2#3$ }}\kern-.6\wd0}}
\let\original@addcontentsline\addcontentsline
\newcommand{\dummy@addcontentsline}[3]{}
\newcommand{\DeactivateToc}{\let\addcontentsline\dummy@addcontentsline}
\newcommand{\ActivateToc}{\let\addcontentsline\original@addcontentsline}
\begin{document}

\title{Local-controllability of the one-dimensional nonlocal Gray-Scott model with moving controls}
\author{V\'ictor Hern\'andez-Santamar\'ia \and Kévin Le Balc'h}

\maketitle

\begin{abstract}
In this paper, we prove the local-controllability to positive constant trajectories of a nonlinear system of two coupled ODE equations, posed in the one-dimensional spatial setting, with nonlocal spatial nonlinearites, and using only one localized control with a moving support. The model we deal with is derived from the well-known nonlinear reaction-diffusion Gray-Scott model when the diffusion coefficient of the first chemical species $d_u$ tends to $0$ and the diffusion coefficient of the second chemical species ${d_v}$ tends to $+ \infty$. The strategy of the proof consists in two main steps. First, we establish the local-controllability of the reaction-diffusion ODE-PDE derived from the Gray-Scott model taking $d_u=0$, and uniformly with respect to the diffusion parameter ${d_v} \in (1, +\infty)$. In order to do this, we prove the (uniform) null-controllability of the linearized system thanks to an observability estimate obtained through adapted Carleman estimates for ODE-PDE. To pass to the nonlinear system, we use a precise inverse mapping argument and, secondly, we apply the shadow limit ${d_v} \rightarrow + \infty$ to reduce to the initial system.
\end{abstract}
\small
\tableofcontents
\normalsize

\section{Introduction}\label{sec:intro}

\subsection{The Gray-Scott model}

The irreversible Gray-Scott model governs the chemical reaction 
\begin{align}
\label{eq:chemicalcubic}
\mathcal{U} + 2 \mathcal{V}& \rightarrow 3 \mathcal{V}, \\
\mathcal{V} &\rightarrow \mathcal{P},\label{eq:chemicalinert}
\end{align}
in a gel reactor, where $\mathcal{V}$ catalyses its own reaction with $\mathcal{U}$ and $\mathcal{P}$ is an inert product. The gel reactor is coupled to a reservoir in which the concentrations of $\mathcal{U}$ and $\mathcal{V}$ are maintained constant.

Let $T>0$ be a (positive) time. For $t$ in the time interval $[0,T]$ and $x$ in the spatial interval $[0,1]$, i.e. the gel reactor, we denote $u=u(t,x)$ and $v(t,x)$ the concentrations of the two chemical species $\mathcal{U}$ and $\mathcal{V}$, $d_u, {d_v} >0$ their (constant) diffusion coefficients. Then, the pair of coupled reaction-diffusion equations governing theses reactions are
\begin{equation}\label{eq:classical_GSS}
\begin{cases}
\partial_t u - d_u \partial_{xx}u = -uv^2 + F(1-u) & \text{in } (0,T)\times(0,1), \\
\partial_t v -{d_v}  \partial_{xx}v = uv^2 - (F+k) v & \text{in } (0,T)\times(0,1),  \\
\partial_x u=\partial_x v=0 &\text{on } (0,T)\times\{0,1\}, \\
u(0,x)=u_0(x), \quad v(0,x)=v_0(x) &\text{in }(0,1).
\end{cases}
\end{equation}
In \eqref{eq:classical_GSS}, the cubic terms $uv^2$ and $-uv^2$ correspond to the chemical reaction \eqref{eq:chemicalcubic}, seeing $u$ as a reactant and $v$ as a product, by the law of mass action. The linear term $k v$ comes from the chemical reaction \eqref{eq:chemicalinert} at rate $k>0$ and the positive constant $F>0$ denotes the rate at which $\mathcal{U}$ is fed from the reservoir into the reactor (and this same feed process takes $\mathcal{U}$ and $\mathcal{V}$ out in a concentration-dependent way). We impose homogeneous Neumann boundary conditions on $u$ and $v$ to guarantee that the environment is closed. See \cite{DKZ97} and the references therein for more details on the Gray-Scott model.\\
\indent Note that the global existence of classical solutions for \eqref{eq:classical_GSS} follows from classical bootstrap argument because the spatial dimension is one. To obtain the global existence of classical solutions for \eqref{eq:classical_GSS} in all spatial dimension, one can use for instance \cite[Theorem 3.1]{Pie10}.\\
\indent The steady states of the local dynamics (i.e. setting $d_u={d_v}=0$) give us the uniform steady-state solutions to \eqref{eq:classical_GSS}. The point $(1,0)$ is an uniform steady state. Moreover, if $F\geq 4(F+k)^2$ or $F=\frac{1}{2}[(1/4)-2k\pm\sqrt{1/16-k}]$, there exits two steady states for \eqref{eq:nonlinear_shadow}, $(u_{+},v_{-})$ and $(u_{-},v_{+})$, characterized as follows
\begin{equation}\label{eq:stationary}
u_{\pm}=\frac{1}{2}(1\pm\sqrt{1-4\gamma^2F}), \ v_{\mp}=\frac{1}{2\gamma}(1\mp\sqrt{1-4\gamma^2F}), \quad \text{with}\ \gamma=\frac{F+k}{F}.
\end{equation}
The proof of this fact can be found, for instance, in \cite[Section 3]{MGR04}.\\
%Arguing as in \cite[Appendix A]{MCKS13}, the same analysis hold for the steady-states $(u_{\pm},w_{\mp})$ (with obvious notation) of \eqref{eq:nonlinear_shadow_SC}. 
\indent For some chemical species $\mathcal{U}$ and $\mathcal{V}$, one can have $d_u << 1 << d_v$. If we send $(d_u, {d_v}) \rightarrow (0,+\infty)$, we obtain formally the following nonlocal Gray-Scott model
\begin{equation}\label{eq:nonlinear_shadow_SC}
\begin{cases}
\partial_t u=-uw^2 + F(1-u) & \text{in } (0,T)\times(0,1), \\
\D w^\prime=\left(\int_{0}^{1} u(t,x)\dx\right) w^2 - (F+k) w &\text{in }(0,T), \\
u(0,x)=u_0(x)\ \text{in }(0,1), \quad w(0)=w_0.
\end{cases}
\end{equation}

The main goal of this paper is to study some controllability properties for \eqref{eq:nonlinear_shadow_SC}. More precisely, we introduce the following distributed controlled system
\begin{equation}\label{eq:nonlinear_shadow}
\begin{cases}
\partial_t u=-uw^2 + F(1-u)+ h \mathbf{1}_{\omega(t)} & \text{in } (0,T)\times(0,1), \\
\D w^\prime=\left(\int_{0}^{1} u(t,x)\dx\right) w^2 - (F+k) w&\text{in }(0,T), \\
u(0,x)=u_0(x)\ \text{in }(0,1), \quad w(0)=w_0.
\end{cases}
\end{equation}
In \eqref{eq:nonlinear_shadow}, at $(t,x) \in (0,T) \times (0,1)$, $(u(t,x), w(t))$ is the \textit{state} while $h(t,x)$ is the \textit{control input}, whose support is localized in a moving subset $\omega(t)$ of $(0,1)$.\\
\indent Typically, we shall consider controls sets $\omega(t)$ determined by the evolution of a given reference $\omega$ of $(0,1)$ through a smooth flow $X(t,x,0)$. This type of support for the control variable $h$ is justified by the fact that there is no diffusion in the variable $u(t,x)$ in \eqref{eq:nonlinear_shadow_SC} so for obtaining controllability, at least in the variable $u$, one needs to consider moving control support as in the articles \cite{MRR13,CSRZ14,CSZZ17,KSD18} for instance.

\subsection{Main results}

The goal of this part is to present the main results of the paper. Among others, we prove the local-controllability to constant positive trajectories for \eqref{eq:nonlinear_shadow_SC}.\\
\indent First, we introduce some assumption for the moving support $\omega(t)$ of the control variable $h$ in \eqref{eq:nonlinear_shadow_SC}.
\begin{ass}\label{ass:moving_control}
There exist a subset $\omega_0 \subset \omega$, two times $t_1$, $t_2$ with $0<t_1<t_2<T$ such that
\begin{enumerate}
\item[a)] $\omega_0(t)\neq (0,1)$ for all $t\in(0,T)$,
\item[b)] $\bigcup_{t\in(0,T)} \omega_0(t)=(0,1)$,
\item[c)] $(0,1)\setminus \omega_0(t)$ is nonempty and connected in $(0,1)$ for any $t\in(0,t_1]\cup[t_2,T)$,
\item[d)] $(0,1)\setminus \omega_0(t)$ has two nonempty connected components in $(0,1)$ for any $t\in(t_1,t_2)$.
\end{enumerate}
\end{ass}
Typically, for every $m \in (0,1) $, the set $\omega_m = \{(0,m) + t (T-m)/T :  t \in (0,T)\}$ satisfies \Cref{ass:moving_control}.\\
\indent The first main result of the paper is the following one.
\begin{thm}
\label{th:mainresult1}
We suppose that \Cref{ass:moving_control} holds. Let $(u_{\pm}, v_{\mp}) \in (0,+\infty)^2$ as in \eqref{eq:stationary}. Then there exists $\delta >0$ such that for every $(u_0, w_0) \in L^2(0,1) \times \R$, verifying $$\norme{(u_0-u_{\pm}, w_0 - v_{\mp})}_{L^2(0,1) \times \R} \leq \delta,$$ one can find a control $h \in L^2((0,T)\times(0,1))$ such that the solution $(u, w) \in H^1(0,T;L^2(\Omega)) \times L^{\infty}(0,T)$ of \eqref{eq:nonlinear_shadow} satisfies
\begin{equation}
\label{eq:finaluw}
(u, w)(T) = (u_{\pm}, v_{\mp}).
\end{equation}
\end{thm}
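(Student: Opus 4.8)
The plan is to follow the two-step scheme announced in the abstract: first control the intermediate ODE--PDE system obtained by setting $d_u = 0$ in the Gray--Scott model (keeping $d_v$ finite), \emph{uniformly} in $d_v \in (1,+\infty)$, and then pass to the shadow limit $d_v \to +\infty$ to recover \eqref{eq:nonlinear_shadow}. Concretely, introduce the auxiliary controlled system
\begin{equation*}
\begin{cases}
\partial_t u = -uv^2 + F(1-u) + h\mathbf{1}_{\omega(t)} & \text{in } (0,T)\times(0,1),\\
\partial_t v - d_v\,\partial_{xx}v = uv^2 - (F+k)v & \text{in } (0,T)\times(0,1),\\
\partial_x v = 0 & \text{on } (0,T)\times\{0,1\},\\
u(0,\cdot) = u_0, \quad v(0,\cdot) = v_0 & \text{in } (0,1),
\end{cases}
\end{equation*}
and prove local null-controllability of its linearization around the constant trajectory $(u_\pm, v_\mp)$ with constants uniform in $d_v$. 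Writing $(y,z) = (u - u_\pm, v - v_\mp)$, the linearized system is a $2\times 2$ reaction--diffusion ODE--PDE with constant coefficients and a scalar control acting only on the ODE component $y$ through the moving set $\omega(t)$; the coupling $z \leftrightsquigarrow y$ is what makes the ODE component observable at all.

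The technical heart is the observability inequality for the adjoint system, of the form
\begin{equation*}
\norme{\varphi(0)}_{L^2}^2 + \norme{\psi(0)}_{L^2}^2 \;\le\; C \int_0^T\!\!\int_{\omega(t)} \abs{\varphi}^2 \,\dx\,\dt,
\end{equation*}
with $C$ independent of $d_v \ge 1$. The plan is to derive this from adapted Carleman estimates. For the PDE component $\psi$ one uses a standard parabolic Carleman estimate, but with a weight built from the flow $X(t,\cdot,0)$ so that the observation region tracks $\omega(t)$; Assumption~\ref{ass:moving_control}(b)--(d) is exactly what is needed for such a moving weight to exist (the splitting into one or two connected components of the complement at different times is the classical obstruction for moving-control Carleman weights, cf.\ \cite{MRR13,CSRZ14}). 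For the ODE component $\varphi$, which has \emph{no} smoothing, one cannot get a Carleman estimate directly; instead one exploits the ODE relation to express $\varphi$ in terms of $\psi$ and its time derivative (or a localized version thereof) and absorbs the resulting terms, taking care that all constants remain uniform as $d_v \to +\infty$ — this uniformity, rather than the existence of the estimate for fixed $d_v$, is the main obstacle. Once observability is in hand, the standard duality/HUM argument gives a uniformly bounded control operator for the linearized problem.

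To pass from the linear to the nonlinear auxiliary system, the plan is to use a precise inverse (or local surjectivity) mapping argument in well-chosen Hilbert spaces — e.g.\ a Liusternik-type implicit function theorem — rather than a fixed-point scheme, so as to keep track of the smallness threshold $\delta$ uniformly in $d_v$. Here one must check that the nonlinearity $(u,w)\mapsto uw^2$ is $C^1$ between the relevant spaces (which in 1D follows from Sobolev embeddings, and is where the regularity class $H^1(0,T;L^2) \times L^\infty(0,T)$ in the statement comes from) and that the linearized operator is onto with a bounded right inverse uniformly in $d_v$. Finally, having a $d_v$-uniform family of controlled trajectories $(u_{d_v}, v_{d_v}, h_{d_v})$ reaching $(u_\pm, v_\mp)$ at time $T$, one invokes the shadow-limit analysis: as $d_v \to +\infty$, $v_{d_v}$ becomes spatially homogeneous, $v_{d_v} \to w$ where $w$ solves the ODE with $\int_0^1 u$, and $(u_{d_v}, h_{d_v}) \to (u,h)$ solving \eqref{eq:nonlinear_shadow} with the same final condition; passing the constraint $(u_{d_v},v_{d_v})(T) = (u_\pm,v_\mp)$ to the limit gives \eqref{eq:finaluw}. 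The main obstacle throughout is the double bookkeeping — getting every estimate (Carleman, observability, inverse mapping, a priori bounds for the shadow limit) uniform in $d_v$ simultaneously — since a non-uniform bound at any stage would break the limit passage.
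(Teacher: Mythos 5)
Your proposal follows essentially the same route as the paper: prove the $d_v$-uniform local controllability of the auxiliary ODE--PDE system \eqref{eq:semilinear_diffusion} (\Cref{th:mainresult2}) via a uniform Carleman/observability estimate for the adjoint system with moving observation set and a precise inverse mapping argument, and then pass to the shadow limit $d_v\to+\infty$, using the uniform bound \eqref{eq:boundedsigmauvh} and compactness to identify the limit as a solution of \eqref{eq:nonlinear_shadow} reaching \eqref{eq:finaluw}. One minor descriptive slip: in the observability step the paper does apply a Carleman estimate directly to the ODE adjoint component $\phi$ (borrowed from \cite{CSRZ14}) and then removes the remaining local term in $\psi$ by substituting $\psi=(-\phi_t-a_{11}\phi-g_1)/a_{21}$ (possible since $a_{21}\neq0$), i.e.\ the substitution goes in the opposite direction to the one you describe, but this does not alter the architecture of the argument.
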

\begin{rmk}
Let us make some comments on \Cref{th:mainresult1}.
\begin{itemize}
\item For a given control $h \in L^2((0,T)\times(0,1))$, note that the solutions $(u, w)$ to \eqref{eq:nonlinear_shadow} belonging to $ H^1(0,T;L^2(\Omega)) \times L^{\infty}(0,T)$ are necessarily unique by using classical Gronwall's argument. The existence of such a solution, associated to some specific control $h$, actually comes from the proof of \Cref{th:mainresult1}.
\item From a modelling point of view, \Cref{th:mainresult1} states that for diffusion coefficients $0 < d_u << 1 << {d_v}$ associated to the chemical species $\mathcal{U}$ and $\mathcal{V}$, by starting from chemical concentrations closed to a chemical equilibrium, there exists a strategy of control, i.e. by adding or withdrawing some chemical product at some moving place of the gel reactor, such that the chemical components $\mathcal{U}$ and $\mathcal{V}$ exactly reach the chemical equilibrium. This is particularly relevant when $(u_{\pm}, v_{\mp})$ is an unstable equilibrium of \eqref{eq:classical_GSS}, see \cite[Section 3]{MGR04}.
\item  \Cref{ass:moving_control} is a natural hypothesis for dealing with the controllability of systems of the form \eqref{eq:nonlinear_shadow_SC}. Indeed, this ODE-ODE system has a finite speed of propagation so the time $T>0$ is taken sufficiently large such that the initial support of control $\omega(0)$ spreads the whole interval $(0,1)$. This is exactly \Cref{ass:moving_control}, b).
\item Let us remark that $(u_{*}, v_{*}) = (1,0)$ is also a constant stationary state of \eqref{eq:classical_GSS}. But \eqref{eq:nonlinear_shadow} is not locally controllable around $(1,0)$. This comes from the fact that all solution $(u,w) \in H^1(0,T;L^2(\Omega)) \times L^{\infty}(0,T)$ to \eqref{eq:nonlinear_shadow}, reaching $(1,0)$ in time $T$, satisfies necessarily $w \equiv 0$ in $(0,T)$. Indeed, setting $a(t) = \int_{0}^1 u(t,x) \dx \in L^{\infty}(0,T)$,  rewriting the second equation, we obtain
$$ w' = a(t) w^2(t) - (F+k) w(t),\ w(T) = 0.$$
So, by the Cauchy-Lipschitz theorem, we obtain that $w \equiv 0$ in $(0,T)$.
\item As far as we know, \Cref{th:mainresult1} is the first result in the literature which deals with the controllability of nonlinear system of coupled ODE equations, posed in the one-dimensional spatial setting, with nonlocal spatial nonlinearites. For results on the controllability of linear and nonlinear parabolic PDEs with spatially nonlocal terms, see \cite{FCLZ16}, \cite{LZ18}, \cite{BHZ19}, \cite{FCLNHNC19} and the recent article of the authors \cite{HSLB20}.
\end{itemize}
\end{rmk}
\indent Actually, a byproduct of the proof of \Cref{th:mainresult1} is a local-controllability result for the following reaction-diffusion ODE-PDE model
\begin{equation}\label{eq:semilinear_diffusion}
\begin{cases}
\partial_t u=-uv^2 + F(1-u)+ h \mathbf{1}_{\omega(t)} & \text{in } (0,T)\times(0,1), \\
\D \partial_t v- {d_v}\partial_{xx}v  =u v^2 - (F+k) v &\text{in }(0,T)\times(0,1), \\
\partial_x v=0 &\text{on } (0,T)\times \{0,1\}, \\
(u,v)(0,\cdot)=(u_0,v_0) &\text{in }(0,1),
\end{cases}
\end{equation}
for ${d_v} \in (1,+ \infty)$.
\begin{thm}
\label{th:mainresult2}
We suppose that \Cref{ass:moving_control} holds. Let $(u_{\pm}, v_{\mp}) \in (0,+\infty)^2$ as in \eqref{eq:stationary}. Then there exist $\delta >0$, $C>0$ such that for every ${d_v} \in (1, + \infty)$, $(u_0, v_0) \in L^2(\Omega) \times H^1(\Omega)$, verifying $$\norme{(u_0-u_{\pm}, v_0 - v_{\mp})}_{L^2(\Omega) \times H^1(\Omega)} \leq \delta,$$ one can find a control $h \in L^2((0,T)\times(0,1))$ such that the solution $(u,v)$ of \eqref{eq:semilinear_diffusion} satisfies
\begin{equation}
\label{eq:boundedsigmauvh}
\norme{u}_{H^1(0,T;L^2(0,1))} + \norme{v}_{ L^{\infty}(0,T;H^1(0,1)) \cap H^1(0,T;L^2(0,1))} + \norme{h} _{L^2(0,T;L^2(0,1))} \leq C,
\end{equation}
and
\begin{equation}
\label{eq:finaluv}
(u, v)(T) = (u_{\pm}, v_{\mp}).
\end{equation}
\end{thm}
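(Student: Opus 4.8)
\emph{Proof strategy for \Cref{th:mainresult2}.}
The natural plan is to linearize around the (space- and time-constant) trajectory $(u_{\pm},v_{\mp})$, to prove the null-controllability of the linearized system \emph{uniformly in ${d_v}\in(1,+\infty)$}, and then to pass to the nonlinear system by a precise inverse mapping argument. Setting $\widetilde u=u-u_{\pm}$, $\widetilde v=v-v_{\mp}$ and $Q=(0,T)\times(0,1)$, and using that $(u_{\pm},v_{\mp})$ is a steady state, \eqref{eq:semilinear_diffusion} rewrites as
\begin{equation*}
\begin{cases}
\partial_t \widetilde u = -(v_{\mp}^2+F)\widetilde u - 2u_{\pm}v_{\mp}\widetilde v + h\mathbf{1}_{\omega(t)} + f_1(\widetilde u,\widetilde v) & \text{in } Q,\\
\partial_t \widetilde v - {d_v}\partial_{xx}\widetilde v = v_{\mp}^2\widetilde u + \bigl(2u_{\pm}v_{\mp}-(F+k)\bigr)\widetilde v + f_2(\widetilde u,\widetilde v) & \text{in } Q,\\
\partial_x \widetilde v = 0 & \text{on }(0,T)\times\{0,1\},
\end{cases}
\end{equation*}
where $f_1,f_2$ gather the quadratic and cubic remainders in $(\widetilde u,\widetilde v)$, with small data $(\widetilde u_0,\widetilde v_0)$ and target $(\widetilde u,\widetilde v)(T)=0$.

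\emph{Step 1: uniform observability of the adjoint system.}
By the Hilbert Uniqueness Method, the null-controllability of the linear part above, with states and controls bounded uniformly in ${d_v}$, is equivalent to a ${d_v}$-uniform observability inequality
\begin{equation*}
\norme{(\varphi,\psi)(0)}_{L^2(0,1)^2}^2 \le C\int_0^T\!\!\int_{\omega_0(t)}|\varphi|^2
\end{equation*}
for the adjoint system
\begin{equation*}
\begin{cases}
-\partial_t \varphi = -(v_{\mp}^2+F)\varphi + v_{\mp}^2\psi & \text{in } Q,\\
-\partial_t\psi - {d_v}\partial_{xx}\psi = -2u_{\pm}v_{\mp}\varphi + \bigl(2u_{\pm}v_{\mp}-(F+k)\bigr)\psi & \text{in } Q,\\
\partial_x\psi=0 & \text{on }(0,T)\times\{0,1\}.
\end{cases}
\end{equation*}
I would establish this through a global Carleman estimate adapted to ODE--PDE systems with a moving observation region, combining three ingredients: (i) a parabolic Carleman inequality for $\psi$, written with the large parameter $s{d_v}$ and a time-dependent weight $e^{-2s\eta(t,x)}$ whose spatial profile has no critical point in $(0,1)\setminus\omega_0(t)$ — this is precisely where \Cref{ass:moving_control}(c)--(d) enters, since the weight $\eta$ must accommodate the change, at $t=t_1$ and $t=t_2$, of the number of connected components of the uncontrolled set; (ii) an ODE-type estimate for $\varphi$ (which, for a.e.\ fixed $x$, solves a linear ODE in $t$), using \Cref{ass:moving_control}(b), so that every $x$ lies in $\omega_0(t)$ for a nondegenerate time interval, which lets one control $\varphi(0,\cdot)$ by the observation of $\varphi$ plus $\norme{\psi}_{L^2(Q)}$; (iii) a ``one observation for two equations'' argument: since $v_{\mp}^2\neq0$, the $\varphi$-equation expresses $\psi$ as an algebraic combination of $\partial_t\varphi$ and $\varphi$, so the local integral of $\psi$ appearing in the Carleman estimate of (i) can be replaced, after a cutoff and an integration by parts in time absorbing the lower-order $\psi$-terms, by a local integral of $\varphi$ on a slightly enlarged moving set. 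Chaining (i)--(iii) with the usual dissipation estimate yields the inequality, with $C$ independent of ${d_v}\in(1,+\infty)$: in the Carleman estimate the coupling term $\varphi$ enters the right-hand side with the favorable weight $(s{d_v})^{-1}$ and, since ${d_v}\ge1$, a large diffusion only reinforces the dissipative terms, so no constant degenerates as ${d_v}\to+\infty$.

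\emph{Step 2: from the linear to the nonlinear system.}
With the uniform observability at hand, the source-term method (Fursikov--Imanuvilov penalization) provides, for the linear system with a source, states and controls in weighted spaces with weights blowing up exponentially as $t\to T$, with norms bounded by $C\norme{(\widetilde u_0,\widetilde v_0)}_{L^2(0,1)\times H^1(0,1)}$ uniformly in ${d_v}$. The quadratic/cubic perturbation $(f_1,f_2)$ is then handled by an inverse mapping theorem: the map sending $(\widetilde u,\widetilde v,h)$ to the triple (equation residuals, initial data) is $C^1$ between the relevant weighted spaces — using that in one space dimension $H^1(0,1)$ is a Banach algebra continuously embedded in $L^{\infty}(0,1)$, so the polynomial nonlinearities act boundedly from the state space $H^1(0,T;L^2(0,1))\times\bigl(L^{\infty}(0,T;H^1(0,1))\cap H^1(0,T;L^2(0,1))\bigr)$ into $L^2(Q)$ with the appropriate weights — and its differential at the origin is onto with a bounded right inverse by Step~1. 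Local inversion then gives, for $\norme{(\widetilde u_0,\widetilde v_0)}_{L^2(0,1)\times H^1(0,1)}\le\delta$, a control $h$ and a solution $(u,v)$ satisfying \eqref{eq:finaluv} together with \eqref{eq:boundedsigmauvh}; the smallness $\delta$ and the constant $C$ are uniform in ${d_v}$ because every ingredient is.

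\emph{Main difficulty.}
The crux is Step~1: constructing and proving the hybrid, ${d_v}$-uniform Carleman estimate — reconciling a parabolic weight for $\psi$ with the pure transport-in-time (ODE) nature of $\varphi$, recovering the missing observation of $\psi$ through the coupling, making the weight compatible with the moving and topology-changing observation region prescribed by \Cref{ass:moving_control}, and keeping all constants bounded as ${d_v}\to+\infty$ — the last point being exactly what makes the shadow limit in \Cref{th:mainresult1} possible.
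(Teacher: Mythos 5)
Your proposal follows essentially the same route as the paper: linearization around $(u_{\pm},v_{\mp})$, a $d_v$-uniform Carleman/observability estimate for the adjoint ODE--PDE system exploiting the coupling $a_{21}=v_{\mp}^2\neq 0$ together with the moving-weight construction, and then the source-term (Fursikov--Imanuvilov) method in weighted spaces followed by a precise inverse mapping argument using the embedding of $L^{\infty}(0,T;H^1)\cap H^1(0,T;L^2)$ into $L^{\infty}$ to handle the quadratic and cubic terms. The only point you treat more lightly than the paper is the $d_v$-uniformity of the local estimate for $\psi$: substituting the parabolic equation produces a term of the form $d_v\iint \psi_{xx}\phi$, which does not disappear by mere dissipativity of large diffusion and requires a dedicated uniform global bound on $\psi_{xx}$ (the paper's Step~4 in the proof of its uniform Carleman estimate).
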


\begin{rmk}
Let us make some comments on \Cref{th:mainresult2}.
\begin{itemize}
\item For a given control $h \in L^2((0,T)\times(0,1))$, note that the solutions $(u, v)$ to \eqref{eq:semilinear_diffusion} belonging to $ H^1(0,T;L^2(\Omega)) \times L^{\infty}((0,T)\times (0,1))$ are necessarily unique by using classical Gronwall's argument. The existence of such a solution, associated to some specific control $h$, actually comes from a precise inverse mapping argument performed in the proof of \Cref{th:mainresult2}.
\item \Cref{th:mainresult2} is a uniform local-controllability result with respect to the parameter ${d_v} \rightarrow + \infty$.
\item By adding a diffusion term $-d_u\partial_{xx}u$, $d_u >0$, in the first equation of \eqref{eq:semilinear_diffusion}, fixing $d_v >0$ and setting $\omega(t) = \omega$, any arbitrary nonempty open set contained in $(0,1)$, we can easily adapt the proof of \Cref{th:mainresult2} to obtain a local-controllability result to positive constant trajectories for the classical reaction-diffusion Gray-Scott model (cf. \cite{AKBD06}).
\item As for \eqref{eq:nonlinear_shadow}, system \eqref{eq:semilinear_diffusion} is not locally controllable around $(1,0)$, at least for smooth solutions. This comes from the fact that all solutions $(u,v) \in L^{\infty}((0,T)\times(0,1)) \times L^{\infty}(0,T)$ to \eqref{eq:nonlinear_shadow}, reaching $(1,0)$ in time $T$, satisfies necessarily $v \equiv 0$ in $(0,T)\times(0,1)$. Indeed, setting $a(t,x) = uv - (F+k) \in L^{\infty}((0,T)\times(0,1))$,  rewriting the second equation, we obtain
$$ \partial_t v - d_{v} \partial_{xx} v  = a(t,x) v,\ v(T) = 0.$$
So, by backward uniqueness for parabolic equation, we obtain that $v \equiv 0$ in $(0,T)\times(0,1)$.
\end{itemize}
\end{rmk}

\subsection{Strategy of the proof}

In order to prove \Cref{th:mainresult1}, a natural strategy would be to linearize \eqref{eq:nonlinear_shadow} around $(u_{\pm}, v_{\mp})$ to obtain
\begin{equation}\label{eq:linear_shadow}
\begin{cases}
\partial_t u=(-v_{\mp}^2-F) u - 2u_{\pm} v_{\mp} w+ h \mathbf{1}_{\omega(t)} & \text{in } (0,T)\times(0,1), \\
\D w^\prime= v_{\mp}^2 \left(\int_{0}^{1} u(t,x)\dx\right)  + ( 2 u_{\pm} v_{\mp} -( F+k)) w&\text{in }(0,T), \\
u(0,x)=u_0(x)\ \text{in }(0,1), \quad w(0)=w_0.
\end{cases}
\end{equation}
Heuristically, \eqref{eq:linear_shadow} seems to be null-controllable because the control $h$ controls the first component $u$ and the nonlocal coupling term $v_{\mp}^2 \left(\int_{0}^{1} u(t,x)\dx\right)$ indirectly controls the second component $w$. But, as far as we know, they do not exist classical tools in the literature to deal with the null-controllability of such a nonlocal system. That is why we follow a different approach to prove \Cref{th:mainresult1}.

\smallskip
The method we employ for proving \Cref{th:mainresult1} is based on two key points.

First, we prove \Cref{th:mainresult2}. In order to do this, we mainly follow \cite{KSD18} which establish the local-controllability to trajectories for a nonlinear system of ODE-PDE in $1$-D. However, two main differences appear comparing \Cref{th:mainresult2} and \cite[Theorem 1.1]{KSD18}. The first one is the uniformity of the local-controllability of \eqref{eq:semilinear_diffusion} with respect to the parameter ${d_v} \in (1, +\infty)$. The second one is the localization of the control in the ODE equation of \eqref{eq:semilinear_diffusion}, instead of the parabolic equation for \cite[System (7)]{KSD18}.\\
\indent  We give the main steps of the proof of \Cref{th:mainresult2}. 
\begin{itemize}
\item We first linearize \eqref{eq:semilinear_diffusion} around $(u_{\pm}, v_{\mp})$, this leads us to study the uniform null-controllability of the linearized system satisfied by the variable $(U,V) = (u - u_{\pm}, v - v_{\mp})$.  This is done in \Cref{sec:nulllin}.
\item We prove a uniform observability estimate for the adjoint system of the linearized equations obtained in the previous step. This is done thanks to a uniform Carleman estimate, which is inspired in the arguments of \cite{CSRZ14, KSD18}. We highlight the fact that the restriction to the one spatial dimensional case appears in this part because Carleman estimates with similar weights for ODE-PDE have only been proved in $1$-D when considering homogeneous Neumann boundary conditions. This is done in \Cref{sec:obs}
\item We deduce from the observability estimate and classical duality arguments, the null-controllability of the linearized system with a source term, exponentially decreasing at $t = T$, see for instance \cite[Theorem 2.44]{Cor07a} when the source is equal to zero. We also prove some extra regularity results on the controlled trajectory, this part is actually crucial to pass to the nonlinear result. This type of argument is inspired from \cite[Chapter I, Section 4]{fursi} and \cite{CSG15}. This is done in \Cref{sec:nullsource}.
\item We use a precise inverse mapping argument to deduce from the (global) null-controllability of the linearized system a local null-controllability result for the nonlinear system satisfied by $(U,V) = (u - u_{\pm}, v - v_{\mp})$. Note that the regularity of the nonlinear mapping is obtained thanks to the extra regularity of the linear controlled trajectory proved in the previous step. This is done in \Cref{sec:inversemappingarg}.
\end{itemize}

\indent Secondly, we prove \Cref{th:mainresult1} by using \Cref{th:mainresult2} and the shadow limit method. Roughly, we obtain that the solution $(u_{{d_v}}, v_{{d_v}}, h_{{d_v}})$ of \eqref{eq:semilinear_diffusion} converges in some sense to $(u,w,h)$ the solution of \eqref{eq:nonlinear_shadow} as ${d_v} \rightarrow + \infty$. This method relies on an adaptation of the arguments presented in \cite[Appendix A]{MCHKS18}. For the use of such a method in the context of control theory, see \cite{HSZ20} and \cite{HSLB20}.\\

\section{Null-controllability of the linearized system}
\label{sec:nulllin1}
\subsection{Change of variable and linearized system}
\label{sec:nulllin}
By setting $(U,V) = (u - u_{\pm}, v - v_{\mp})$, where $(u,v)$ is the solution to \eqref{eq:semilinear_diffusion}, we obtain that $(U,V)$ satisfies
\begin{equation}\label{eq:nonlinear_diffusionUV}
\begin{cases}
\partial_t U= a_{11} U + a_{12} V + N_1(U,V) +   h \mathbf{1}_{\omega(t)} & \text{in } (0,T)\times(0,1), \\
\partial_t V- {d_v}\partial_{xx}V  = a_{21} U + a_{22} V + N_2(U,V)  &\text{in }(0,T)\times(0,1), \\
\partial_x V=0 &\text{on } (0,T)\times \{0,1\}, \\
(U,V)(0,\cdot)=(U_0,V_0) &\text{in }(0,1),
\end{cases}
\end{equation}
with 
\begin{equation}
\label{eq:defaij}
a_{11} = - v_{\mp}^2 - F,\ a_{12} = - 2 u_{\pm} v_{\mp},\ a_{21} = v_{\mp}^2,\ a_{22} = 2 u_{\pm} v_{\mp} - (F+k),
\end{equation}
\begin{equation}
N(U,V) := \begin{pmatrix}
N_1(U,V) \\
N_2(U,V) 
\end{pmatrix} := \begin{pmatrix}
-(UV^2 + 2 v_{\mp} UV + u_{\pm} V^2) \\
 UV^2 + 2 v_{\mp} UV + u_{\pm} V^2
\end{pmatrix}.\label{eq:NonlinearN}
\end{equation}

The goal of \Cref{sec:nulllin1} is to prove the null-controllability of the linearized system
\begin{equation}\label{eq:linearized_diffusion}
\begin{cases}
\partial_t U=a_{11} U + a_{12} V + F_1 +   h \mathbf{1}_{\omega(t)} & \text{in } (0,T)\times(0,1), \\
\partial_t V- {d_v}\partial_{xx}V  = a_{21} U + a_{22} V + F_2 &\text{in }(0,T)\times(0,1), \\
\partial_x V=0 &\text{on } (0,T)\times \{0,1\}, \\
(U,V)(0,\cdot)=(U_0,V_0) &\text{in }(0,1),
\end{cases}
\end{equation}
where $F_1$, $F_2$ are source terms belonging to an appropriate Banach space $X$ and exponentially decreasing at $t=T$, see \Cref{prop:NullControllabilityLinear} below. This would be indeed possible thanks to the fact that 
\begin{equation}
\label{eq:couplage}
a_{21} \neq 0,
\end{equation}
using \eqref{eq:defaij} because $v_{\mp} \neq 0$. Heuristically, the control $h$ directly controls the component $U$ thanks to the first equation of \eqref{eq:linearized_diffusion} and $U$ indirectly controls the component $V$ thanks to the coupling term $a_{21} V$, appearing in the second equation of \eqref{eq:linearized_diffusion}.

Our objective then will be to prove that we can find $h$, bounded independently of ${d_v} \in (1, +\infty)$ such that the solution $(U,V)$ of \eqref{eq:linearized_diffusion} satisfies $(U,V)(T) = 0$. Moreover, we want that the nonlinear quantity $N(U,V)$ belongs to $X$
to employ at the end of the day an inverse mapping argument to obtain the controllability of \eqref{eq:semilinear_diffusion} around $(u_{\pm}, v_{\mp})$, see \Cref{sec:inversemappingarg} below.\\
\indent In the sequel, we will use the following notations
$$\Omega := (0,1),\  Q_T := (0,T)\times (0,1)\ \text{and}\ \Sigma_T = (0,T)\times\{0,1\}.$$

The following  standard proposition, stated without proof, guarantees the well-posedness of \eqref{eq:linearized_diffusion}. It can be established for instance using Galerkin approximations.
\begin{prop}
For every $(F_1,F_2) \in L^2(Q_T)^2$, $(U_0,V_0) \in L^2(\Omega)^2$, the system \eqref{eq:linearized_diffusion} admits a unique weak solution $(U,V) \in [H^1(0,T;L^2(\Omega))] \times [L^2(0,T;H^1(\Omega)) \cap H^1(0,T;H^1(\Omega)')]$.
\end{prop}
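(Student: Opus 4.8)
The statement is standard, so the plan is to run a Galerkin scheme in the orthonormal basis $(e_k)_{k\ge 0}$ of $L^2(\Omega)$ consisting of the eigenfunctions of $-\partial_{xx}$ on $(0,1)$ with homogeneous Neumann boundary conditions (so $e_0\equiv 1$, $e_k(x)=\sqrt 2\cos(k\pi x)$ and $-\partial_{xx}e_k=\lambda_k e_k$ with $\lambda_k=k^2\pi^2$), after absorbing the given term $h\mathbf 1_{\omega(t)}\in L^2(Q_T)$ into $F_1$. Writing $U^n=\sum_{k=0}^n U_k(t)e_k$, $V^n=\sum_{k=0}^n V_k(t)e_k$ and projecting \eqref{eq:linearized_diffusion} onto $\mathrm{span}(e_0,\dots,e_n)$, the fact that $-\partial_{xx}$ is diagonal in this basis and acts only on the $V$-component splits the projected problem into the decoupled family of $2\times 2$ linear systems
\begin{equation*}
\begin{cases}
U_k' = a_{11}U_k + a_{12}V_k + F_{1,k}, \\
V_k' + d_v\lambda_k V_k = a_{21}U_k + a_{22}V_k + F_{2,k}, \\
U_k(0)=(U_0,e_k)_{L^2(\Omega)},\quad V_k(0)=(V_0,e_k)_{L^2(\Omega)},
\end{cases}
\end{equation*}
$0\le k\le n$, where $F_{i,k}=(F_i,e_k)_{L^2(\Omega)}\in L^2(0,T)$ and the $a_{ij}$ are the constants \eqref{eq:defaij}. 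Each such system has constant coefficients and an $L^2(0,T)$ right-hand side, hence a unique solution $(U_k,V_k)\in H^1(0,T)^2$ by the Carath\'eodory theorem; this defines the approximation $(U^n,V^n)$.

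Next I would derive $n$-uniform a priori estimates. Testing the first equation of the $k$-th system with $U_k$ and the second with $V_k$, summing over $k$, and using $\sum_{k=0}^n\lambda_k V_k^2=\|\partial_x V^n\|_{L^2(\Omega)}^2$ (Neumann integration by parts), one obtains
\begin{equation*}
\frac12\frac{d}{dt}\bigl(\|U^n\|_{L^2(\Omega)}^2+\|V^n\|_{L^2(\Omega)}^2\bigr)+d_v\|\partial_x V^n\|_{L^2(\Omega)}^2 \le C\bigl(\|U^n\|_{L^2(\Omega)}^2+\|V^n\|_{L^2(\Omega)}^2\bigr)+C\bigl(\|F_1\|_{L^2(\Omega)}^2+\|F_2\|_{L^2(\Omega)}^2\bigr),
\end{equation*}
where the cross terms $a_{12}(U^n,V^n)_{L^2(\Omega)}$ and $a_{21}(U^n,V^n)_{L^2(\Omega)}$ are controlled by Young's inequality and $C$ depends only on the $a_{ij}$. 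Gronwall's lemma, together with $\|U^n(0)\|_{L^2(\Omega)}\le\|U_0\|_{L^2(\Omega)}$ and $\|V^n(0)\|_{L^2(\Omega)}\le\|V_0\|_{L^2(\Omega)}$, then bounds $(U^n,V^n)$ in $L^\infty(0,T;L^2(\Omega))^2$ and $V^n$ in $L^2(0,T;H^1(\Omega))$ uniformly in $n$ (the constants may depend on $d_v$, which is irrelevant here). Feeding these bounds back into the equations gives $\partial_t U^n=a_{11}U^n+a_{12}V^n+F_1^n$ bounded in $L^2(Q_T)$, hence $U^n$ bounded in $H^1(0,T;L^2(\Omega))$, and $\partial_t V^n$ bounded in $L^2(0,T;H^1(\Omega)')$, using that $\phi\mapsto d_v\int_\Omega\partial_x V^n\,\partial_x\phi$ has $H^1(\Omega)'$-norm at most $d_v\|\partial_x V^n\|_{L^2(\Omega)}$ by the Neumann boundary condition.

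Finally I would extract a subsequence with $U^n\rightharpoonup U$ in $H^1(0,T;L^2(\Omega))$ and $V^n\rightharpoonup V$ in $L^2(0,T;H^1(\Omega))\cap H^1(0,T;H^1(\Omega)')$, and pass to the limit in the projected equations — immediate, as they are linear — to conclude that $(U,V)$ satisfies the first equation of \eqref{eq:linearized_diffusion} in $L^2(Q_T)$ and the second in the variational sense in $L^2(0,T;H^1(\Omega)')$ (the Neumann condition being built into the formulation), i.e. $(U,V)$ is a weak solution in the asserted spaces. The continuity of the evaluation at $t=0$ from $H^1(0,T;L^2(\Omega))$ to $L^2(\Omega)$ and from $L^2(0,T;H^1(\Omega))\cap H^1(0,T;H^1(\Omega)')$ to $L^2(\Omega)$, combined with the strong convergences $U^n(0)\to U_0$ and $V^n(0)\to V_0$ in $L^2(\Omega)$, yields $U(0)=U_0$ and $V(0)=V_0$. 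Uniqueness follows by applying the very same energy estimate to the difference of two solutions with vanishing data, the identity $\frac{d}{dt}\|V\|_{L^2(\Omega)}^2=2\langle\partial_t V,V\rangle_{H^1(\Omega)',H^1(\Omega)}$ being legitimate for $V\in L^2(0,T;H^1(\Omega))\cap H^1(0,T;H^1(\Omega)')$, so that Gronwall's lemma forces $U\equiv V\equiv 0$.

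There is no real obstacle here beyond bookkeeping; the only point deserving attention is the hybrid ODE--PDE structure. The $U$-component carries no spatial regularization, so one should not seek (nor need) more than $H^1(0,T;L^2(\Omega))$ for $U$; and the off-diagonal zeroth-order coupling between the non-parabolic $U$-equation and the parabolic $V$-equation is harmless precisely because those terms are dominated by $\|U\|_{L^2(\Omega)}^2+\|V\|_{L^2(\Omega)}^2$ in the joint $L^2(\Omega)\times L^2(\Omega)$ energy. Incidentally, this part of the argument is dimension-free; the restriction to $d=1$ used elsewhere in the paper plays no role at this stage.
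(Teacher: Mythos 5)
Your Galerkin argument is correct and is exactly the route the paper indicates: the proposition is stated there without proof, with only the remark that it can be established using Galerkin approximations, which is what you carry out. The single point you gloss over is the routine step that the bound on $\partial_t V^n$ in $L^2(0,T;H^1(\Omega)')$ is obtained by testing against $P_n\varphi$ and using that the spectral projection $P_n$ is bounded on $H^1(\Omega)$ — harmless with your Neumann eigenfunction basis.
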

\subsection{Observability of the adjoint system}
\label{sec:obs}
In order to prove the null-controllability of the linearized system \eqref{eq:linearized_diffusion}, we will first prove an observability inequality for the following adjoint system
\begin{equation}\label{eq:linear_adj}
\begin{cases}
-\partial_t\phi =a_{11} \phi + a_{21}\psi + g_1 & \text{in } Q_T, \\
-\partial_t\psi={d_v}\partial_{xx}\psi +a_{12}\phi+a_{22} \psi +g_2 &\text{in }Q_T, \\
\partial_x \psi=0 &\text{on } \Sigma_T, \\
(\phi,\psi)(T,\cdot)=(\phi_T,\psi_T) &\text{in }(0,1).
\end{cases}
\end{equation}
where $g_1, g_2 \in L^2(0,T;L^2(0,1))$ are given source terms. More precisely, the goal is to prove that there exists a positive constant $C >0$ such that for every $(\phi_T, \psi_T) \in L^2(\Omega)^2$, 
\begin{equation}
\label{eq:ObsDesiredL2}
\norme{(\phi,\psi)(0, \cdot)}_{L^2(\Omega)^2} \leq C \int_{0}^T\int_{\omega(t)} |\phi(t,x)|^2 \dt \dx.
\end{equation} 
Note that the null-controllability of \eqref{eq:linearized_diffusion} when $F_1 = F_2 = 0$ is a direct consequence of \eqref{eq:ObsDesiredL2}, thanks to a duality argument, called Hilbert Uniqueness Method, see \cite[Theorem 2.44]{Cor07a}.\\
\indent For proving such an observability inequality \eqref{eq:ObsDesiredL2}, we will use Carleman estimates.

\subsubsection{Preliminaries on Carleman inequalities with moving controls}

The ideas presented below were mainly developed in \cite{CSZZ17,CSRZ14,KSD18}. In particular, the results presented in \cite{KSD18} are at the heart of our methodology. 

We begin by recalling that $\omega$ satisfies \Cref{ass:moving_control}.  Let us consider $\omega_{i}:[0,T]\to 2^{(0,1)}$, $i=1,2$, subsets of $\omega$, such that
\begin{equation}
\label{eq:subsetomega}
\overline{\omega}_0\subset\mathring{\overline{\omega}}_1, \quad  \overline{\omega}_1\subset \mathring{\overline{\omega}}_2, \quad\text{and } \overline{\omega}_2\subset \mathring{\overline{\omega}},
\end{equation}
where $\mathring{\overline{\omega}}_i$, $i=1,2$ and $\mathring{\overline{\omega}}$ stand for the relative interiors with respect to $[0,1]\times[0,T]$ of $\overline{\omega_i}$ and $\overline{\omega}$, respectively. 

As usual in the framework of Carleman estimates, the first step is to construct a suitable weight function. The function introduced below allows to obtain a Carleman inequality for parabolic equations coupled to ordinary differential equations, which is exactly the structure of the adjoint system \eqref{eq:linear_adj}.

\begin{lem}\label{lem:weight_function}
 There exist a positive number $\tau\in(0,\min\{1,T/2\})$, a positive constant $C_0>0$, and a function $\eta\in C^\infty([0,1]\times[0,T])$ such that
\begin{align}\label{eq:deriv_eta}
\eta_x(x,t)\neq 0 &\qquad \forall x\in\overline{(0,1)\setminus \omega_0(t)}, \ \forall t\in[0,T],  \\
\eta_t(x,t)\neq 0 &\qquad \forall x\in\overline{(0,1)\setminus \omega_0(t)}, \ \forall t\in[0,T],\label{eq:dtetaneq}   \\
\eta_t(x,t)> 0 &\qquad \forall x\in\overline{(0,1)\setminus \omega_0(t)}, \ \forall t\in[0,\tau],\label{eq:dtetapos}   \\
\eta_t(x,t)< 0 &\qquad \forall x\in\overline{(0,1)\setminus \omega_0(t)}, \ \forall t\in[T-\tau,T], \label{eq:dtetaneg} \\ \label{eq:deriv_w_0}
\eta_x(0,t)\geq C_0 &\qquad \forall t\in[0,T], \\ \label{eq:deriv_w_1}
\eta_x(1,t)\leq -C_0 & \qquad \forall t\in [0,T], \\
\min_{(x,t)\in[0,1]\times[0,T]}\{\eta(x,t)\}&=\frac{3}{4}\|\eta\|_{L^\infty([0,1]\times(0,T))}
\end{align}
\end{lem}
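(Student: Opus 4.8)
The plan is to build $\eta$ out of two ingredients: a spatial profile that is monotone on each connected component of the complement of the moving control, and a time-dependent modulation that makes $\eta_t$ change sign at the right moments. First I would invoke Assumption~\ref{ass:moving_control}, together with the nesting \eqref{eq:subsetomega}, to select an auxiliary moving set $\omega_0$ strictly between the reference $\omega_0$ and $\omega_1$. Because of items (c)--(d), for $t\in[0,t_1]\cup[t_2,T)$ the set $(0,1)\setminus\omega_0(t)$ is a single interval touching one endpoint of $[0,1]$, while for $t\in(t_1,t_2)$ it splits into two intervals, one touching $0$ and one touching $1$. On each such interval one can use (a smoothed version of) the signed distance to the moving control, or directly a function of the form $\pm(x-\theta(t))$ with $\theta$ a smooth curve staying inside $\omega_0(t)$, so that $\eta_x$ never vanishes on $\overline{(0,1)\setminus\omega_0(t)}$; this gives \eqref{eq:deriv_eta}. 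Near $x=0$ the slope is positive and near $x=1$ it is negative, which after normalising the amplitude yields \eqref{eq:deriv_w_0}--\eqref{eq:deriv_w_1} with a uniform constant $C_0>0$.

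Next I would handle the time derivative. The idea is to write $\eta(x,t)=A(t)\,\beta(x,t)+B(t)$ where $\beta$ carries the spatial monotonicity above and $A,B$ are chosen so that $\eta_t=A'\beta+A\beta_t+B'$ is bounded away from zero on $\overline{(0,1)\setminus\omega_0(t)}$ for all $t$, and in addition is strictly positive on $[0,\tau]$ and strictly negative on $[T-\tau,T]$, for $\tau\in(0,\min\{1,T/2\})$ small. Concretely, one can take $A$ increasing rapidly on $[0,\tau]$, roughly constant on $[\tau,T-\tau]$ (but still with a controlled sign of $A'\beta+B'$ so that \eqref{eq:dtetaneq} holds on the whole interval — here one exploits that the topological change at $t_1,t_2$ lets the two intervals move in opposite directions, so a single $\beta_t$ cannot vanish on both components simultaneously), and decreasing on $[T-\tau,T]$. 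This produces \eqref{eq:dtetaneq}, \eqref{eq:dtetapos}, \eqref{eq:dtetaneg}. Finally, for the last normalisation $\min\eta=\tfrac34\|\eta\|_{L^\infty}$, I would add a large positive constant $K$ to the whole function: replacing $\eta$ by $\eta+K$ does not affect \eqref{eq:deriv_eta}--\eqref{eq:dtetaneg}, and since $\min(\eta+K)=\min\eta+K$ and $\|\eta+K\|_{L^\infty}=\max\eta+K$ for $K$ large enough, one can solve $\min\eta+K=\tfrac34(\max\eta+K)$ for $K=4\min\eta-3\max\eta$ possibly after first shifting so that this is positive; smoothness is preserved since we only added a constant and the original $\beta$, $A$, $B$ were taken in $C^\infty$.

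The main obstacle is reconciling the global non-vanishing of $\eta_t$ (condition \eqref{eq:dtetaneq} on all of $[0,T]$, in particular on the middle regime $(t_1,t_2)$ where the complement has two components) with the prescribed signs of $\eta_t$ at the two ends and the simultaneous constraint that $\eta_x$ be nonvanishing on a moving, topologically changing domain. This is precisely the point where Assumption~\ref{ass:moving_control} (c)--(d) is essential: the connectedness structure for small and large $t$ lets $(0,1)\setminus\omega_0(t)$ be pushed to a single endpoint, so the sign of $\eta_t$ there can be dictated freely, while for intermediate $t$ the two components, pinned respectively at $0$ and $1$, can be steered so that a common choice of modulation keeps $\eta_t\neq0$ on both. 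Making these choices compatible and smooth across $t=t_1$ and $t=t_2$ is the delicate part; once the geometry is set up, verifying \eqref{eq:deriv_eta}--\eqref{eq:dtetaneg} and then the amplitude normalisation is routine. This construction follows the scheme of \cite{CSRZ14,KSD18}, adapted here to Neumann boundary conditions (which is why \eqref{eq:deriv_w_0}--\eqref{eq:deriv_w_1} are imposed, to absorb the boundary terms in the forthcoming Carleman estimate).
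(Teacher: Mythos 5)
The paper does not prove \Cref{lem:weight_function} directly: it invokes \cite[Lemma 4.3]{CSRZ14} together with the boundary-slope observation of \cite[Lemma 1]{KSD18}, so your plan—a spatially monotone moving profile plus a time modulation plus an additive normalisation—is in the same spirit as the construction the paper points to. As a proof, however, it leaves the crux open. Conditions \eqref{eq:dtetaneq}--\eqref{eq:dtetaneg} force, for each fixed $x$, the sign change of $t\mapsto\eta_t(x,t)$ (which must occur, since $\eta_t(x,\cdot)>0$ near $t=0$ and $<0$ near $t=T$ whenever $x$ lies in the complement at those times) to happen only at instants when $x$ lies outside $\overline{(0,1)\setminus\omega_0(t)}$, i.e. when the moving control covers $x$; this is exactly where \Cref{ass:moving_control}, b) enters, and the working mechanism is a ``crest'' curve $\theta(t)$ kept strictly inside $\omega_0(t)$, with $\eta_t$ of one sign ahead of the crest and of the opposite sign behind it, so the zero set of $\eta_t$ is swept along inside the control region. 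Your stated reason for \eqref{eq:dtetaneq} in the middle regime (``a single $\beta_t$ cannot vanish on both components simultaneously'') is not the relevant point and does not by itself give the global nonvanishing, and you explicitly defer the matching across $t_1,t_2$ (``the delicate part''), which is the entire content of the lemma: in the ansatz $\eta=A(t)\beta(x,t)+B(t)$ with $A$ roughly constant in the middle, all of the difficulty is hidden in the unspecified $\beta$.

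Two further slips should be corrected. First, the crest/auxiliary region must be taken strictly \emph{inside} $\omega_0(t)$, at positive distance from $\overline{(0,1)\setminus\omega_0(t)}$: your choice of an auxiliary set ``between $\omega_0$ and $\omega_1$'' lies outside $\omega_0$, and at the crest one has $\eta_x=0$, which would then violate \eqref{eq:deriv_eta}; relatedly, \Cref{ass:moving_control} does not force the components of the complement to touch the endpoints of $[0,1]$ (e.g. $\omega_0(t)$ may abut both endpoints with a connected complement in the interior), so the geometry is more general than you describe, while \eqref{eq:deriv_w_0}--\eqref{eq:deriv_w_1} must hold for every $t$ regardless of the regime. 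Second, in the normalisation the additive constant solving $\min\eta+K=\tfrac34(\max\eta+K)$ is $K=3\max\eta-4\min\eta$, not $4\min\eta-3\max\eta$; this is harmless but should be fixed. With the crest mechanism made explicit (and the profile smoothed only inside $\omega_0(t)$), the rest of your outline does go through and reproduces the scheme of \cite{CSRZ14,KSD18}.
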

The proof of Lemma \ref{lem:weight_function} can be obtained as in \cite[Lemma 4.3]{CSRZ14} with the observation of \cite[Lemma 1]{KSD18} stating that precise values for the derivative of the weight at the boundary \eqref{eq:deriv_w_0}--\eqref{eq:deriv_w_1} have a prescribed sign. 

Now, let us introduce a function $r\in C^\infty(0,T)$, symmetric with respect to $t=\frac{T}{2}$ (more precisely, $r(t)=r(T-t)$ for any $t\in(0,T)$) and such that
\begin{equation}\label{eq:def_r}
r(t)=\begin{cases}
\frac{1}{t} &\text{for } 0<t\leq \tau/2, \\
\textnormal{strictly decreasing} &\text{for } \frac{\tau}{2}<t<\tau, \\
1 &\text{for } \tau\leq t\leq T/2.
\end{cases}
\end{equation}

For any parameter $\lambda>0$, let us define the weights
\begin{equation}
\label{eq:defweights}
\alpha(x,t):=r(t)(e^{2\lambda\|\eta\|_\infty}-e^{\lambda\eta(x,t)}) \quad\text{and}\quad \xi(x,t):=r(t)e^{\lambda\eta(x,t)}, \quad\forall(x,t)\in Q_T
\end{equation}

We have the following uniform Carleman estimate for the heat equation with homogeneous Neumann boundary conditions.
\begin{lem}
\label{lem:CarlPDE}
For any $0<\epsilon\leq 1$, there exists positive constants $\lambda_1$, $s_1$ and $C$, depending on $\omega_1$, such that for any $\lambda\geq\lambda_1$, $s\geq s_1(\lambda)$, the solution $\psi$ to 
\begin{equation*}
\begin{cases}
\D -\partial_t\psi-\frac{1}{\epsilon}\partial_{xx}\psi = f &\textnormal{in } Q_T, \\
\psi_x=0 &\textnormal{on } \Sigma_T, \\
\psi(T,\cdot)=\psi_T &\textnormal{in } (0,1).
\end{cases}
\end{equation*}
with $\psi_T\in L^2(0,1)$ and $f\in L^2(0,T;L^2(0,1))$ verifies
\begin{align}\label{eq:car_PDE_unif}
I(\psi;\epsilon)\leq C\left(\epsilon^2 \iint_{Q_T}|f|^2e^{-2s\alpha}\dx\dt+s^3\lambda^4\iint_{\omega_1(t)\times(0,T)}\xi^3|\psi|^2\dx\dt\right)
\end{align}
where $I(\psi,\epsilon)$ stands for
\begin{align*}
I(\psi;\epsilon):=&\ s^{-1}\iint_{Q_T}\xi^{-1}(\epsilon^2 |\partial_t\psi|^2+|\partial_{xx}\psi|^2)e^{-2s\alpha}\dx\dt \\ &+s\lambda^2\iint_{Q_T}\xi|\partial_x\psi|^2e^{-2s\alpha}\dx\dt +s^3\lambda^4\iint_{Q_T}\xi^3|\psi|^2e^{-2s\alpha}\dx\dt \\
&+\left. s^3\lambda^3\int_{0}^{T}(\xi^2|\psi|^2e^{-2s\alpha})\dt\right|_{x=1}+\left. s^3\lambda^3\int_{0}^{T}(\xi^2|\psi|^2e^{-2s\alpha})\dt\right|_{x=0}.
\end{align*}
\end{lem}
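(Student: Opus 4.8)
The plan is to prove Lemma~\ref{lem:CarlPDE} by reducing the $\epsilon$-dependent heat equation to the classical $\epsilon=1$ case through a rescaling in time, and then invoking a standard Fursikov--Imanuvilov Carleman estimate for the heat equation with homogeneous Neumann boundary conditions. The key observation is that the operator $\partial_t + \tfrac{1}{\epsilon}\partial_{xx}$ is, after a time dilation $t \mapsto t/\epsilon$, conjugate to the standard heat operator; the subtlety is that such a rescaling would change the time interval $(0,T)$ to $(0,T/\epsilon)$ and hence the weight function, so instead I would work directly with the $\epsilon$-weighted estimate by keeping $\epsilon$ as an explicit bookkeeping parameter throughout the integration-by-parts computation, exactly as in \cite[Lemma 4.2]{CSRZ14} (for the space-dependent control case) and \cite{KSD18}.

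\textbf{Step 1: Conjugation and the pointwise identity.} Set $\varphi = e^{-s\alpha}\psi$ and compute the equation satisfied by $\varphi$: applying $e^{-s\alpha}$ to $-\partial_t\psi - \tfrac{1}{\epsilon}\partial_{xx}\psi = f$ yields $-\partial_t\varphi - \tfrac{1}{\epsilon}\partial_{xx}\varphi + (\text{first-order and zeroth-order terms in }\varphi) = e^{-s\alpha}f$, where the extra terms involve $s\alpha_t$, $s\lambda \xi \eta_x$, $s^2\lambda^2\xi^2\eta_x^2/\epsilon$, etc. Split the operator into its formally self-adjoint part $P_1\varphi$ and skew-adjoint part $P_2\varphi$, take $L^2(Q_T)$ norms of $P_1\varphi + P_2\varphi = (\text{rest})$, and expand $\|P_1\varphi\|^2 + \|P_2\varphi\|^2 + 2(P_1\varphi, P_2\varphi)_{L^2(Q_T)}$. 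The double product $(P_1\varphi,P_2\varphi)$ produces, after integration by parts in $x$ and $t$, the dominant positive terms $s^3\lambda^4\iint \xi^3|\varphi|^2$, $s\lambda^2\iint\xi|\partial_x\varphi|^2$, together with boundary contributions at $x=0,1$ which, thanks to the sign conditions \eqref{eq:deriv_w_0}--\eqref{eq:deriv_w_1} on $\eta_x$ at the boundary (this is precisely where the Neumann condition $\psi_x = 0$ and \cite[Lemma~1]{KSD18} enter), have the favourable sign $+s^3\lambda^3\int_0^T (\xi^2|\varphi|^2)|_{x=0,1}$. The point is to track all powers of $\epsilon$: the coefficient $\tfrac1\epsilon$ in front of $\partial_{xx}$ forces the self-adjoint part to carry a $\tfrac1\epsilon$, which after Young's inequality is what produces the $\epsilon^2$ in front of $\iint|f|^2 e^{-2s\alpha}$ and the $\epsilon^2$ weighting on the $|\partial_t\psi|^2$ and $|\partial_{xx}\psi|^2$ terms in $I(\psi;\epsilon)$; since $\epsilon \le 1$ these extra $\epsilon$-factors are harmless and in fact improve the estimate.

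\textbf{Step 2: Absorbing lower-order terms and re-inserting $\psi$.} Choose $\lambda \ge \lambda_1$ large and then $s \ge s_1(\lambda)$ large so that the positive terms dominate the lower-order remainders coming from $\alpha_t$, $\eta_{xx}$, $r'(t)$, and the crossed terms; here one uses the standard facts $|r'|\lesssim r^2$, $|\alpha_t|\lesssim \xi^2$ and $|\eta_x|, |\eta_{xx}| \lesssim 1$ from Lemma~\ref{lem:weight_function} and \eqref{eq:def_r}. Crucially, $\lambda_1$, $s_1$ and $C$ can be chosen independently of $\epsilon \in (0,1]$, because all the $\epsilon$-dependence is confined to the favourable $\epsilon^2$ prefactors. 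Then one adds back the local term: the integral $\iint \xi^3|\varphi|^2$ over $Q_T$ dominates the same over $\omega_1(t)\times(0,T)$, so moving that local piece to the right-hand side (at the cost of the $s^3\lambda^4\iint_{\omega_1(t)\times(0,T)}\xi^3|\psi|^2$ term, after undoing the conjugation on the support of $\omega_1$) gives the global estimate for $\varphi$. Finally, translate back from $\varphi = e^{-s\alpha}\psi$ to $\psi$: for the gradient and second-derivative terms one uses $\partial_x\varphi = e^{-s\alpha}(\partial_x\psi - s\alpha_x\psi)$, so $|\partial_x\psi|^2 e^{-2s\alpha} \lesssim |\partial_x\varphi|^2 + s^2\lambda^2\xi^2|\varphi|^2$, and similarly for $\partial_{xx}$ and $\partial_t$; all resulting extra terms are already controlled by the left-hand side for $s,\lambda$ large. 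This reconstitutes $I(\psi;\epsilon)$ and completes the proof.

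\textbf{Main obstacle.} The delicate point is not the algebra of the Carleman computation per se --- which is by now routine --- but verifying uniformity in $\epsilon$: one must be scrupulous that no step introduces a hidden negative power of $\epsilon$ (for instance, the term $\tfrac{1}{\epsilon}\iint \xi^{-1}|\partial_{xx}\varphi|^2$ coming from $\|P_1\varphi\|^2$ must be handled so that it contributes with the $\epsilon^2$ weighting claimed in $I(\psi;\epsilon)$ rather than an $\epsilon^{-1}$ one). This is exactly why the statement carries the $\epsilon^2$ factors where it does, and keeping them balanced throughout Young's inequalities --- always pairing $\tfrac1\epsilon$ against $\epsilon^2$ to land on $\epsilon$ or better --- is the part that requires care. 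A secondary check is that the boundary terms generated by the crossed product $(P_1\varphi, P_2\varphi)$, which are the only place the one-dimensional Neumann structure is genuinely used, indeed come with the sign dictated by \eqref{eq:deriv_w_0}--\eqref{eq:deriv_w_1}; this is inherited verbatim from \cite[Lemma~1]{KSD18} and \cite[Lemma~4.2]{CSRZ14} once Lemma~\ref{lem:weight_function} is in hand.
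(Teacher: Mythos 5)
Your proposal follows essentially the same route as the paper's own proof in Appendix~\ref{sec:uniformcarlHeat}: conjugate with the weight ($w=e^{-s\alpha}\psi$), split the conjugated operator into its symmetric and antisymmetric parts, expand the cross product to extract the dominant distributed terms and the boundary terms whose favourable sign comes from \eqref{eq:deriv_w_0}--\eqref{eq:deriv_w_1} together with the Neumann condition, absorb the lower-order terms for $\lambda$ and $s$ large uniformly in $\epsilon\in(0,1]$, recover the $w_t$ and $w_{xx}$ terms from the expressions of the two parts, and finally clean up the local terms (the paper additionally removes the local gradient term by a cutoff integration by parts, enlarging $\omega_0$ to $\omega_1$, a routine step your sketch only implicitly covers). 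Apart from minor bookkeeping imprecisions on exactly which terms carry the $\epsilon^2$ factor (in the paper the estimate is first obtained with $\epsilon^{-2}$ weights and then multiplied by $\epsilon^2$), the argument is correct and matches the paper's strategy.
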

The proof of this result follows the methodology of \cite[Appendix A]{KSD18} and pays special attention to the dependency of $\epsilon$ during the computations. We give a sketch of the proof in \Cref{sec:uniformcarlHeat}. Note that the important properties of the weights $\eta$ for obtaining the parabolic Carleman estimate \eqref{eq:car_PDE_unif} are \eqref{eq:deriv_eta}, \eqref{eq:deriv_w_0}, \eqref{eq:deriv_w_1}.\\

We have the following Carleman estimate for ODE, coming from \cite[Lemma 4.5]{CSRZ14}.
\begin{lem}
There exist some numbers $\lambda_1 \geq \lambda_0$, $s_1 \geq s_0$ and $C_1>0$ such that for all $\lambda \geq \lambda_1$, all $s \geq s_1$ and all $q \in H^1(0,T;L^2(0,1))$, the following holds
\begin{align}\notag
I(q):=s\lambda^2 \iint_{Q_T} & \xi |q|^2 e^{-2s \alpha} \dx \dt \\ \label{eq:car_ODE}
&\leq C_1 \left(\iint_{Q_T} |q_t|^2 e^{-2 s \alpha} \dx\dt + \lambda^2 \iint_{\omega_1(t)\times(0,T)} (s \xi)^2 |q|^2 e^{-2 s \alpha} \dx \dt \right).
\end{align}
\end{lem}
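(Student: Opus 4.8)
The plan is to derive this purely from the structure of the weights, since $q$ solves no equation and the inequality is really a weighted one‑dimensional (in $t$) estimate. First I would conjugate, setting $P:=e^{-s\alpha}q$. Because $r(t)=1/t$ near $t=0$ and near $t=T$ and $e^{2\lambda\|\eta\|_\infty}-e^{\lambda\eta}$ is bounded below by a positive constant (by \eqref{eq:defweights}), $\alpha(x,t)\to+\infty$ as $t\to0^+$ and as $t\to T^-$, uniformly in $x$; hence $P\in H^1(0,T;L^2(0,1))$ and the boundary‑in‑time quantities $\alpha_t|P|^2$ vanish at $t=0,T$ (if one prefers, prove the estimate first for $q\in C^\infty([0,T];L^2(0,1))$ and pass to the limit by density). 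From the identity $e^{-s\alpha}q_t=P_t+s\alpha_t P$, squaring, integrating over $Q_T$, and integrating by parts in time in the cross term $2s\iint_{Q_T}\alpha_t P P_t\dx\dt=s\iint_{Q_T}\alpha_t\,\partial_t(|P|^2)\dx\dt$, the time‑boundary terms vanishing, I would obtain
\[
\iint_{Q_T}|q_t|^2e^{-2s\alpha}\dx\dt=\iint_{Q_T}|P_t|^2\dx\dt+\iint_{Q_T}\big(s^2\alpha_t^2-s\alpha_{tt}\big)|P|^2\dx\dt\;\geq\;\iint_{Q_T}\big(s^2\alpha_t^2-s\alpha_{tt}\big)|P|^2\dx\dt .
\]

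The heart of the proof is then a pointwise lower bound for the factor $s^2\alpha_t^2-s\alpha_{tt}$. Differentiating \eqref{eq:defweights}, $\alpha_t=r'\big(e^{2\lambda\|\eta\|_\infty}-e^{\lambda\eta}\big)-r\lambda\eta_te^{\lambda\eta}$. I would first show that there are $\lambda_1\geq\lambda_0$ and $c_0>0$ so that, for every $\lambda\geq\lambda_1$, $|\alpha_t(x,t)|\geq c_0\lambda\,\xi(x,t)$ for all $t\in(0,T)$ and all $x\in\overline{(0,1)\setminus\omega_0(t)}$. Indeed, on $[\tau,T-\tau]$ one has $r\equiv1$, $r'\equiv0$, so $\alpha_t=-\lambda\eta_te^{\lambda\eta}$, bounded below in modulus by $c\lambda e^{\lambda\eta}=c\lambda\xi$ thanks to \eqref{eq:dtetaneq} and compactness; on $(0,\tau]$ the term $r'(e^{2\lambda\|\eta\|_\infty}-e^{\lambda\eta})$ is $\leq0$ (since $r'\leq0$ by \eqref{eq:def_r}) while $-r\lambda\eta_te^{\lambda\eta}$ is strictly negative by \eqref{eq:dtetapos}, so there is no cancellation and $|\alpha_t|\geq r\lambda\eta_te^{\lambda\eta}\geq c\lambda\xi$ (using $r\geq1$); the interval $[T-\tau,T)$ is symmetric, now using \eqref{eq:dtetaneg}. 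Combining this with the fact that near $t=0$ (resp.\ $t=T$) the first term of $\alpha_t$ is of order $t^{-2}$ (resp.\ $(T-t)^{-2}$) and dominates, whereas $\alpha_{tt}$ is at most of order $t^{-3}$ (resp.\ $(T-t)^{-3}$), I would deduce that, for $\lambda\geq\lambda_1$ and $s\geq s_1(\lambda)$ large,
\[
s^2\alpha_t^2-s\alpha_{tt}\geq c_0\,s\lambda^2\xi\qquad\text{on }\{(x,t)\in Q_T:x\notin\omega_0(t)\},
\]
whereas $s^2\alpha_t^2-s\alpha_{tt}\geq0$ for all $x\in(0,1)$ whenever $t$ lies in a sufficiently small neighbourhood of $\{0,T\}$, and $|s^2\alpha_t^2-s\alpha_{tt}|\leq C\lambda^2(s\xi)^2$ on the complementary compact sub‑interval of time (there $r,r',r''$ and $\eta$ and its derivatives are bounded and $\xi$ is bounded above and below by positive constants).

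To conclude, I would split $Q_T=Q_T^{\mathrm{loc}}\cup Q_T^{\mathrm{in}}$ with $Q_T^{\mathrm{loc}}:=\{(x,t)\in Q_T:x\in\omega_1(t)\}$ and $Q_T^{\mathrm{in}}:=Q_T\setminus Q_T^{\mathrm{loc}}$; by \eqref{eq:subsetomega} one has $\omega_0(t)\subset\omega_1(t)$, hence $Q_T^{\mathrm{in}}\subset\{(x,t)\in Q_T:x\notin\omega_0(t)\}$ and the first bound above applies on $Q_T^{\mathrm{in}}$. On $Q_T^{\mathrm{loc}}$, near $t=0$ and $t=T$ the integrand $(s^2\alpha_t^2-s\alpha_{tt})|P|^2$ is nonnegative and is simply discarded, while on the remaining compact time‑interval it is bounded from below by $-C\lambda^2\iint_{\omega_1(t)\times(0,T)}(s\xi)^2|P|^2\dx\dt$; since $|P|^2=e^{-2s\alpha}|q|^2$, this last integral is a fixed multiple of the local term in \eqref{eq:car_ODE}. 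Adding $c_0\iint_{Q_T^{\mathrm{loc}}}s\lambda^2\xi|P|^2\dx\dt$ to both sides and using $s\xi\geq1$ (for $s\geq s_1(\lambda)$) to absorb it into the local term, I recover $c_0\,I(q)$ on the left‑hand side and obtain \eqref{eq:car_ODE} with an appropriate constant $C_1$.

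The main obstacle — and essentially the only non‑routine point — is the behaviour of the weights near the temporal endpoints $t=0$ and $t=T$, where $\alpha$, $\alpha_t$ and $\alpha_{tt}$ all blow up: one must check with care that the sign prescriptions \eqref{eq:dtetapos}--\eqref{eq:dtetaneg} on $\eta_t$ forbid any cancellation in $\alpha_t$ there, and that taking $s$ large enough (depending on $\lambda$) makes $s^2\alpha_t^2$ dominate the singular $t^{-3}$‑part of $\alpha_{tt}$. The interior estimate, the recombination step, and the density justification are routine. This lemma is in fact \cite[Lemma 4.5]{CSRZ14}, whose proof follows these lines.
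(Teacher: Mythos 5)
The paper offers no proof of this lemma at all: it is imported verbatim from \cite[Lemma 4.5]{CSRZ14}, so there is no internal argument to match, and your sketch is essentially a reconstruction of the standard proof behind that cited result — and it is correct. The conjugation $P=e^{-s\alpha}q$, the exact identity obtained after one integration by parts in time (the temporal boundary terms do vanish, since $\alpha\to+\infty$ uniformly in $x$ as $t\to0^+$ and $t\to T^-$ while $\alpha_t$ blows up only polynomially in $r$), the observation that the sign prescriptions \eqref{eq:dtetaneq}--\eqref{eq:dtetaneg} forbid any cancellation between $r'(e^{2\lambda\|\eta\|_\infty}-e^{\lambda\eta})$ and $-r\lambda\eta_t e^{\lambda\eta}$ on $\overline{(0,1)\setminus\omega_0(t)}$, hence $|\alpha_t|\geq c\lambda\xi$ there, and the splitting of $Q_T$ along $\omega_1(t)$ (legitimate because $\overline{\omega}_0\subset\mathring{\overline{\omega}}_1$ by \eqref{eq:subsetomega}) are exactly the right ingredients; the final absorption of the local part of $I(q)$ via $s\xi\geq1$ is fine, and your allowance $s\geq s_1(\lambda)$ matches the way the lemma is actually invoked in Step 1 of the proof of \Cref{prop:UniformCarleman}.

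One constant-tracking remark. On the transition band $\omega_1(t)\times\left([\tau/2,\tau]\cup[T-\tau,T-\tau/2]\right)$, where $r'\neq0$ and $\xi$ is only controlled in terms of $e^{\lambda\|\eta\|_\infty}$, your pointwise bound $|s^2\alpha_t^2-s\alpha_{tt}|\leq C\lambda^2(s\xi)^2$ holds only with $C=C(\lambda)$ (compare \eqref{eq:prop_weights}, where $|\alpha_t|\leq C(T+e^{2\lambda\|\eta\|_\infty})\lambda\xi^2$ and $|\alpha_{tt}|\leq C(T+T^2+e^{2\lambda\|\eta\|_\infty})\lambda^2\xi^3$), so as written your $C_1$ inherits a $\lambda$-dependence, whereas the statement nominally has $C_1$ uniform. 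This is harmless for the paper — in \Cref{prop:UniformCarleman} and afterwards $\lambda$ and $s$ are eventually fixed and only uniformity in $d_v$ matters — and it is easily repaired inside your own scheme: on that band the contribution $s^2\alpha_t^2|P|^2$ is nonnegative and can be kept, so one only needs $s|\alpha_{tt}|\leq\lambda^2(s\xi)^2$ there, which follows from the bound on $\alpha_{tt}$ above once $s\geq s_1(\lambda)$; the $\lambda$-dependence then sits entirely in $s_1(\lambda)$, and $C_1$ can be taken absolute.
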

Note that the important properties of the weights $\eta$ for obtaining the ODE Carleman estimate \eqref{eq:car_ODE} are \eqref{eq:dtetaneq}, \eqref{eq:dtetapos}, \eqref{eq:dtetaneg}.

\subsubsection{An uniform observability inequality}
Let us introduce the following useful notations 
\begin{equation}
\begin{split}
\label{eq:defminmax}
\alpha^\star(t)&=\min_{x\in[0,1]}\alpha(x,t), \quad \widehat{\alpha}(t)=\max_{x\in[0,1]}\alpha(x,t), \\
\xi^\star(t)&=\max_{x\in[0,1]}\xi(x,t), \quad \widehat{\xi}(t)=\min_{x\in[0,1]}\xi(x,t).
\end{split}
\end{equation}

%\begin{rmk}\label{rmk:bound_xi_hat}
%Observe that defined in this way and according to \eqref{eq:defweights}, there exists some positive constant $C>0$ only depending on $\omega_1(t)$ and $\lambda$ such that
%%
%\begin{equation}
%\label{eq:xixhat}
%\widehat{\xi}(t)\leq \xi(x,t) \leq C \widehat{\xi}(t).
%\end{equation}
%%
%\end{rmk}
We have the following uniform Carleman estimate for the solution to \eqref{eq:linear_adj}.
\begin{prop}
\label{prop:UniformCarleman}
There exist positive constants $\lambda_2>0$, $s_2>0$ and $C>0$, such that for any ${d_v} \geq 1$, $\lambda\geq\lambda_2$, $s\geq s_2(\lambda)$ and any initial data $(\phi_T,\psi_T) \in L^2(\Omega)^2$, the solution to \eqref{eq:linear_adj} verifies
\begin{align} \notag 
&s \iint_{Q_T}  e^{-2s \alpha} \xi |\phi|^2  \dx \dt + s^3\iint_{Q_T}e^{-2s\alpha}\xi^3|\psi|^2\dx\dt \\ \notag
&\quad \leq C \Bigg( s^8\iint_{\omega_2(t)\times(0,T)}  e^{-4s\alpha^\star+2s\widehat{\alpha}}({\xi}^\star)^{8}|\phi|^2\dx\dt \\
&\qquad\qquad + s^3\iint_{Q_T}e^{-2s\alpha}\xi^3|g_1|^2\dx\dt  + \iint_{Q_T}e^{-2s\alpha} |g_2|^2\dx\dt  \Bigg).
\label{eq:car_Prop}
\end{align}
\end{prop}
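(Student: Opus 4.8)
The plan is to combine the two building-block Carleman estimates already available, namely the uniform parabolic estimate \eqref{eq:car_PDE_unif} of \Cref{lem:CarlPDE} (applied with $\epsilon = 1/{d_v} \leq 1$) and the ODE Carleman estimate \eqref{eq:car_ODE}, and then to absorb the cross terms coming from the coupling coefficients $a_{12}$, $a_{21}$. Concretely, I would first apply \eqref{eq:car_PDE_unif} to the $\psi$-equation of \eqref{eq:linear_adj}, viewing its right-hand side as $f = a_{12}\phi + a_{22}\psi + g_2$; this produces $I(\psi;1/{d_v})$ on the left, and on the right a local term $s^3\lambda^4\iint_{\omega_1(t)\times(0,T)}\xi^3|\psi|^2$ together with $\epsilon^2\iint_{Q_T}(|a_{12}|^2|\phi|^2 + |a_{22}|^2|\psi|^2 + |g_2|^2)e^{-2s\alpha}$. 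Since $\epsilon \leq 1$, the $|\psi|^2$ piece is absorbed into the dominant $s^3\lambda^4\iint_{Q_T}\xi^3|\psi|^2 e^{-2s\alpha}$ term of $I(\psi;1/{d_v})$ for $s$ large, and the $g_2$ piece is already present in the desired right-hand side of \eqref{eq:car_Prop}. Crucially, $\epsilon^2 |a_{12}|^2 \iint_{Q_T}|\phi|^2 e^{-2s\alpha} \leq C \iint_{Q_T}\xi|\phi|^2 e^{-2s\alpha}$ (as $\epsilon \leq 1 \leq \xi$), which will later be absorbed by the $\phi$-estimate.

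Next I would apply \eqref{eq:car_ODE} to the $\phi$-equation, which reads $-\partial_t\phi = a_{11}\phi + a_{21}\psi + g_1$, so $q = \phi$ and $\partial_t\phi = -(a_{11}\phi + a_{21}\psi + g_1)$. This gives $s\lambda^2\iint_{Q_T}\xi|\phi|^2 e^{-2s\alpha}$ on the left, bounded by $C\iint_{Q_T}(|a_{11}|^2|\phi|^2 + |a_{21}|^2|\psi|^2 + |g_1|^2)e^{-2s\alpha} + C\lambda^2\iint_{\omega_1(t)\times(0,T)}(s\xi)^2|\phi|^2 e^{-2s\alpha}$. The $|\phi|^2$ term on the right is absorbed into the left by taking $\lambda$ large; the $|\psi|^2$ term is controlled by the $s^3\iint_{Q_T}\xi^3|\psi|^2 e^{-2s\alpha}$ part of the already-derived $\psi$-estimate (for $s$ large, since $\xi \geq$ const $> 0$ thanks to the structure of $r$ and $\eta$); and $|g_1|^2 e^{-2s\alpha} \leq \xi^3 |g_1|^2 e^{-2s\alpha}$ feeds into the desired right-hand side. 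Adding suitable multiples of the two resulting inequalities then yields the left-hand side of \eqref{eq:car_Prop}, $s\iint_{Q_T}e^{-2s\alpha}\xi|\phi|^2 + s^3\iint_{Q_T}e^{-2s\alpha}\xi^3|\psi|^2$, controlled by the two volume source terms and two \emph{local} terms: $s^3\lambda^4\iint_{\omega_1(t)\times(0,T)}\xi^3|\psi|^2 e^{-2s\alpha}$ and $\lambda^2 s^2\iint_{\omega_1(t)\times(0,T)}\xi^2|\phi|^2 e^{-2s\alpha}$.

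The main obstacle — and the place where \Cref{ass:moving_control}(c)--(d) and the sign properties \eqref{eq:dtetaneq}--\eqref{eq:deriv_w_1} of the weight really get used — is the last step: eliminating the local observation of $\psi$ in favor of a local observation of $\phi$ only, on the slightly larger set $\omega_2$. The standard device is to take a smooth cutoff $\chi$ supported in $\omega_2(t)$ with $\chi \equiv 1$ on $\omega_1(t)$, use the $\psi$-equation to write $s^3\lambda^4\iint \chi\,\xi^3 |\psi|^2 e^{-2s\alpha}$ as $\iint \chi\,\xi^3 \psi(-\partial_t\psi - \tfrac1{d_v}\partial_{xx}\psi - a_{12}\phi - a_{22}\psi)\cdot(\text{weight})$-type expression, integrate by parts in $x$ and $t$ to move derivatives off $\psi$, and invoke the coupling $a_{12} \neq 0$ to replace $\psi$ by $\phi$ plus lower-order contributions; the integrations by parts generate terms in $\partial_x\psi$, $\partial_{xx}\psi$ and $\partial_t\psi$ which, after Young's inequality with a small parameter, are absorbed by $I(\psi;1/{d_v})$ — here one must be careful that the $\epsilon = 1/{d_v}$ weights in $I(\psi;1/{d_v})$ are dimensionally consistent with what the integration by parts in $\partial_{xx}\psi$ produces, which is exactly why \Cref{lem:CarlPDE} was stated with explicit $\epsilon$-dependence and why the result is uniform in ${d_v} \geq 1$. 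The high powers $s^8$ and the asymmetric weight $e^{-4s\alpha^\star + 2s\widehat\alpha}$ in \eqref{eq:car_Prop} arise precisely from bounding these cross terms, replacing the pointwise-in-$x$ weights $\xi$, $\alpha$ by their extrema $\xi^\star$, $\alpha^\star$, $\widehat\alpha$ over $x \in [0,1]$ and accounting for the loss of powers of $s\xi$ in the integrations by parts. One then fixes $\lambda = \lambda_2$ and absorbs all remaining $\lambda$-dependent constants, obtaining the claimed inequality for $s \geq s_2$.
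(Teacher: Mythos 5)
Your Step 1 — adding the ODE Carleman estimate \eqref{eq:car_ODE} for $\phi$ to the uniform parabolic estimate \eqref{eq:car_PDE_unif} applied with $\epsilon=1/d_v$ to the $\psi$-equation, and absorbing the global coupling terms for $s,\lambda$ large — is exactly the paper's first step. The genuine gap is in the elimination of the local $\psi$-term. You propose to pair $\psi$ against its own parabolic equation and to ``invoke the coupling $a_{12}\neq 0$ to replace $\psi$ by $\phi$''. This mechanism cannot work: in the $\psi$-equation, $\psi$ appears only under $\partial_t$, under $\partial_{xx}$ and as a zero-order term with coefficient $a_{22}$, so there is no algebraic identity expressing $\psi$ in terms of $\phi$; pairing $\psi$ with its own equation merely yields the cross term $a_{12}\iint \zeta\,\xi^3 e^{-2s\alpha}\psi\phi$ together with sign-indefinite derivative contributions, from which the local integral of $|\psi|^2$ cannot be extracted. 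The coupling that is actually crucial is $a_{21}\neq 0$ (see \eqref{eq:couplage}), i.e.\ the coefficient of $\psi$ in the ODE: the paper substitutes $\psi = a_{21}^{-1}\left(-\phi_t - a_{11}\phi - g_1\right)$ into one factor of $|\psi|^2$ on $\omega_1(t)$ via a cutoff supported in $\omega_2(t)$, integrates by parts in time to move $\partial_t$ off $\phi$, and only then uses the $\psi$-equation to replace $\psi_t$. (Incidentally, the adjoint PDE carries $d_v\partial_{xx}\psi$, not $\tfrac{1}{d_v}\partial_{xx}\psi$, and \Cref{ass:moving_control} c)--d) enters through the construction of the weight $\eta$ in \Cref{lem:weight_function}, not through this step.)

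Even granting the correct substitution, your absorption claim for the generated derivative terms is precisely where uniformity in $d_v$ is at stake, and your proposal does not contain the argument needed there. Replacing $\psi_t$ via the PDE produces the cross term $\tfrac{d_v}{a_{21}}\,s^3\iint \zeta\, e^{-2s\alpha}\xi^3\,\psi_{xx}\,\phi$. If you apply Young's inequality so that the $|\psi_{xx}|^2$ piece matches the weight $s^{-1}\xi^{-1}e^{-2s\alpha}$ appearing in $I(\psi;1/d_v)$ (which carries no positive power of $d_v$), then the local $|\phi|^2$ piece inherits a factor $d_v^2$, destroying the uniform estimate; if instead you keep $d_v^2$ with $\psi_{xx}$, you are left with the global quantity $d_v^2 s^{-2}\iint e^{-2s\widehat{\alpha}}(\widehat{\xi})^{-2}|\psi_{xx}|^2$, which is \emph{not} dominated by $I(\psi;1/d_v)$. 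The paper resolves this with a separate step (its Step 4): a direct energy estimate for $d_v\iint |\psi_{xx}|^2 e^{-2s\widehat{\alpha}}(\widehat{\xi})^{-2}$ obtained by pairing the equation with $\psi_{xx}$ times the $x$-independent weight and integrating by parts twice, exploiting the Neumann conditions and the fact that $\widehat{\alpha},\widehat{\xi}$ do not depend on $x$, so that all remaining global terms carry a factor $d_v^{-1}$ and can be absorbed by the left-hand side for $s$ large. This is also where the asymmetric weight $e^{-4s\alpha^\star+2s\widehat{\alpha}}(\xi^\star)^8$ and the power $s^8$ in \eqref{eq:car_Prop} actually originate; without this step the proof does not yield an estimate uniform in $d_v\geq 1$.
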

\begin{proof}

We divide the proof in several steps. In the following, the positive constants $C>0$ vary from line to line and are independent of the parameters $d_v, \lambda, s$.

\subsubsection*{Step 1: First estimates}
We apply the Carleman estimate \eqref{eq:car_ODE} to the first equation of \eqref{eq:linear_adj} and obtain for $s$ and $\lambda$ large enough
\begin{align}
\label{eq:carODEProof1}
I(\phi) \leq C\left(\iint_{Q_T}e^{-2s\alpha}(|\phi|^2 + |\psi|^2+|{g_1}|^2\dx\dt+ \iint_{\omega_1(t)\times(0,T)}e^{-2s\alpha}(s\lambda\xi)^2\left(|\phi|^2 \right)\dx\dt\right).
\end{align}

We apply inequality \eqref{eq:car_PDE_unif} to the PDE in \eqref{eq:linear_adj} with $\epsilon=1/{d_v}$, to obtain
\begin{align} \label{eq:car_init_PDE}
I(\psi;{d_v}^{-1})\leq C\left({d_v}^{-2} \iint_{Q_T} \left(|\phi|^2+|\psi|^2+|g_2|^2\right)e^{-2s\alpha}\dx\dt+s^{3}\lambda^{4}\iint_{\omega_1(t)\times(0,T)}\xi^{3}|\psi|^2\dx\dt\right).
\end{align}

 Adding up \eqref{eq:car_init_PDE} and \eqref{eq:carODEProof1}, we can use the parameters $\lambda, s$ to absorb all the lower order terms. More precisely, we get for $\lambda,s$  sufficiently large.
\begin{align}\notag 
I(\phi)&+I(\psi;{d_v}^{-1})\\
& \leq C  \Bigg(\iint_{\omega_1\times(0,T)}e^{-2s\alpha}(s\lambda\xi)^2|\phi|^2\dx\dt\notag + s^3\lambda^4\iint_{\omega_1(t) \times(0,T)}e^{-2s\alpha} \xi^{3}|\psi|^2\dx\dt \notag\\  \label{eq:car_step2}
&\quad\quad+\iint_{Q_T}e^{-2s\alpha}\left(|g_1|^2+|{g_2}|^2\right)\dx\dt\Bigg).
\end{align}

\subsubsection*{Step 2. Local estimate for $\psi$}
%
%Let us set $\lambda=\lambda_2\geq\max\{\lambda_0,\lambda_1\}$ to a fixed value large enough such that \eqref{eq:car_step2} holds for $s \geq s_2$. We mention that now the constants appearing can depend on $\lambda$. \\

\indent From \eqref{eq:subsetomega}, let us consider a function $\zeta\in C^\infty([0,T]\times[0,1])$ verifying 
\begin{equation*}
\begin{cases}
0\leq \zeta \leq 1 &\forall (t,x)\in [0,T]\times[0,1], \\
\zeta(t,x)=1 &\forall t\in [0,T],  \;\; \forall x\in \omega_1(t),\\
\zeta(t,x)=0 &\forall t\in[0,T], \;\; \forall x\in[0,L]\setminus\overline{\omega_2(t)}.
\end{cases}
\end{equation*}

We have
\begin{align*}
s^3\iint_{\omega_1(t)\times(0,T)}e^{-2s\alpha}\xi^3|\psi|^2\dx\dt &\leq s^3 \iint_{Q_T}e^{-2s\alpha}\xi^3\zeta|\psi|^2\dx\dt \\
&= \frac{1}{a_{21}} s^3 \iint_{Q_T}e^{-2s\alpha}\xi^3\zeta\psi(-\phi_t-a_{11}\phi-g_1)\dx\dt.
\end{align*}
Observe that at this point is crucial to have $a_{21}\neq 0$ by \eqref{eq:couplage}.

Integrating by parts in time in the right-hand side yields
\begin{align*}\notag 
&s^3\iint_{\omega_1(t)\times(0,T)}e^{-2s\alpha}\xi^3|\psi|^2\dx\dt\\
& \leq \frac{1}{a_{21}}s^3 \iint_{Q_T}(e^{-2s\alpha}\xi^3\zeta)_{t}\psi\phi\dx\dt+\frac{1}{a_{21}}s^3 \iint_{Q_T}e^{-2s\alpha}\xi^3\zeta \psi_t\phi\dx\dt \notag \\
&-\frac{a_{11}}{a_{21}}s^3\iint_{Q_T}e^{-2s\alpha}\xi^3\zeta\psi\phi\dx\dt-\frac{1}{a_{21}}s^3\iint_{Q_T}e^{-2s\alpha}\xi^3\zeta\psi {g_1}\dx\dt.
\end{align*}

Using the equation verified by $\psi$ in the second term on the right-hand side of the above equation, we get
\begin{align}\notag 
s^3\iint_{\omega_1(t)\times(0,T)}&e^{-2s\alpha}\xi^3|\psi|^2\dx\dt \\ \notag
\leq &\frac{1}{a_{21}}s^3 \iint_{Q_T}(e^{-2s\alpha}\xi^3\zeta)_{t}\psi\phi\dx\dt-\frac{{d_v}}{a_{21}} s^3 \iint_{Q_T}e^{-2s\alpha}\xi^3\zeta \psi_{xx}\phi\dx\dt \\ \notag
&-\frac{a_{12}}{a_{21}}s^3\iint_{Q_T}e^{-2s\alpha}\xi^3 \zeta |\phi|^2\dx\dt - \frac{a_{22}}{a_{21}}s^3\iint_{Q_T}e^{-2s\alpha}\xi^3 \zeta \psi\phi \dx\dt \\ \notag
&-\frac{a_{11}}{a_{21}}s^3\iint_{Q_T}e^{-2s\alpha}\xi^3\zeta\psi\phi\dx\dt-\frac{1}{a_{21}}s^3\iint_{Q_T}e^{-2s\alpha}\xi^3\zeta\psi {g_1}\dx\dt \\ \label{eq:est_local_first}
&=: \sum_{i=1}^{6} K_i.
\end{align}
We bound each term $K_i$ for $1\leq i\leq 6$. For the first one, we have
\begin{align*}
K_1=\frac{1}{a_{21}}s^3\iint_{Q_T}e^{-2s\alpha}\xi^3\zeta_t\psi\phi\dx\dt+\frac{1}{a_{21}}s^3\iint_{Q_T}(e^{-2s\alpha}\xi^3)_t\zeta \psi\phi\dx\dt.
\end{align*}
Using the properties of the function $\zeta$ and  $|(e^{-2s\alpha}\xi^{3})_t|\leq Cs^2 e^{-2s{\alpha}} \xi^{5}$, we get after applying Cauchy-Schwarz and Young inequalities that
\begin{align}\label{eq:K1_est}
|K_1|\leq 2\delta s^3\iint_{Q_T}e^{-2s\alpha}\xi^3 |\psi|^2\dx\dt+C_{\delta}s^7\iint_{\omega_2(t)\times(0,T)}e^{-2s\alpha}\xi^7 |\phi|^2\dx\dt
\end{align}
for any $\delta>0$. 

We can use definitions \eqref{eq:defminmax} and Young's inequality to obtain 
\begin{align}\notag
|K_2|& \leq {d_v}^2 s^{-2} \iint_{Q_T} e^{-2s\widehat{\alpha}} (\widehat \xi)^{-2}|\psi_{xx}|^2\dx\dt+Cs^8\iint_{\omega_2(t)\times(0,T)}e^{-4s\alpha+2s\widehat{\alpha}}(\widehat{\xi})^2 (\xi)^6 |\phi|^2\dx\dt\\
& \leq {d_v}^2 s^{-2} \iint_{Q_T} e^{-2s\widehat{\alpha}} (\widehat{\xi})^{-2}|\psi_{xx}|^2\dx\dt+Cs^8\iint_{\omega_2(t)\times(0,T)}e^{-4s\alpha^\star+2s\widehat{\alpha}}({\xi}^\star)^8|\phi|^2\dx\dt.\label{eq:K2_est}
\end{align}
Observe that the constant $C>0$ is uniform with respect to ${d_v}$ and that also we have introduced a smaller weight accompanying the variable $\psi_{xx}$. In a future step, we will estimate uniformly this new term. 

For $3\leq i\leq 6$, we can bound easily $K_i$. Using Cauchy-Schwarz and Young inequalities, a straightforward computation gives
\begin{align}\notag 
\sum_{i=3}^{6}|K_i|\leq &\  C_{\delta}\left(s^3\iint_{Q_T}e^{-2s\alpha}\xi^3|g_1|^2\dx\dt+s^3\iint_{\omega_2(t)\times(0,T)}e^{-2s\alpha}\xi^3|\phi|^2\dx\dt\right) \\ \label{eq:K3_K6_est}
&+3\delta s^3\iint_{Q_T}e^{-2s\alpha}\xi^3 |\psi|^2\dx\dt
\end{align}
for all $\delta>0$. 

Putting together \eqref{eq:car_step2}, \eqref{eq:est_local_first}, \eqref{eq:K1_est}, \eqref{eq:K2_est}, and \eqref{eq:K3_K6_est}, we can take $\delta$ sufficiently small and obtain
\begin{align}\notag 
&I(\phi)+I(\psi;{d_v}^{-1})\\ \notag
&\leq C \Bigg(s^8\iint_{\omega_2\times(0,T)}  (e^{-4s\alpha^\star+2s\widehat{\alpha}}+e^{-2s\alpha})({\xi}^\star)^{8}|\phi|^2\dx\dt \\ \notag
&\qquad \quad + {d_v}^2 s^{-2} \iint_{Q_T} e^{-2s\widehat{\alpha}} (\widehat{\xi})^{-2}|\psi_{xx}|^2 \dx\dt + \iint_{Q_T}e^{-2s\alpha} |{g_2}|^2\dx\dt \notag\\
&\qquad\quad +s^3\iint_{Q_T}e^{-2s\alpha}\xi^3|{g_1}|^2 \dx \dt \Bigg)\label{eq:car_step_3} 
\end{align}
for all $\lambda$ and $s$ sufficiently large.

\subsubsection*{Step 4. Uniform global estimate of $\psi_{xx}$ and conclusion}
We devote this step to estimate the global term of $\psi_{xx}$ appearing in \eqref{eq:car_step_3}. Notice that this integral has a factor ${d_v}^2$ so we need to estimate it uniformly with respect to the parameter ${d_v}$. 

From the PDE verified in \eqref{eq:linear_adj}, we have
\begin{align}
{d_v} & \iint_{Q_T}|\psi_{xx}|^2e^{-2s\widehat{\alpha}}(\widehat\xi)^{-2} \dx\dt \notag\\
& =-\iint_{Q_T}\psi_{t}\psi_{xx}e^{-2s\widehat{\alpha}}(\widehat{\xi})^{-2}\dx\dt-a_{12}\iint_{Q_T}\phi \psi_{xx}e^{-2s\widehat{\alpha}}(\widehat\xi)^{-2} \dx\dt \notag\\
&\quad - a_{22}\iint_{Q_T}\psi \psi_{xx}e^{-2s\widehat{\alpha}}(\widehat\xi)^{-2} \dx\dt  - \iint_{Q_T}g_2 \psi_{xx}e^{-2s\widehat{\alpha}}(\widehat\xi)^{-2} \dx\dt.\label{step4:1}
\end{align}
Integrating by parts in space on the first term on the right-hand side of \eqref{step4:1} and using Cauchy-Schwarz and Young inequalities on the other two, we readily get
\begin{align}\notag 
{d_v}  \iint_{Q_T} & |\psi_{xx}|^2e^{-2s\widehat{\alpha}}(\widehat\xi)^{-2} \dx\dt \\ \notag
&\leq \frac{1}{2}\iint_{Q_T}\left(|\psi_x|^2\right)_{t} e^{-2s\widehat{\alpha}}(\widehat\xi)^{-2}\dx\dt+ 3\delta {d_v} \iint_{Q_T} |\psi_{xx}|^2e^{-2s\widehat{\alpha}}(\widehat\xi)^{-2} \dx\dt \\ \notag
&\quad +C_{\delta}{d_v}^{-1}\iint_{Q_T}|\phi|^2e^{-2s\widehat{\alpha}}(\widehat\xi)^{-2} \dx\dt +C_{\delta}{d_v}^{-1}\iint_{Q_T}|\psi|^2e^{-2s\widehat{\alpha}}(\widehat\xi)^{-2} \dx\dt \\
& \quad +C_{\delta}{d_v}^{-1}\iint_{Q_T}|g_2|^2e^{-2s\widehat{\alpha}}(\widehat\xi)^{-2} \dx\dt  \label{eq:est_inter_unif}
\end{align}
for any $\delta>0$. Observe that we have put the parameter ${d_v}^{-1}$ in front of three of the right hand side terms, however, the first one is still missing it. Further integration by parts in the time variable and then integrating in space yields
\begin{align*}
{d_v}  \iint_{Q_T} & |\psi_{xx}|^2e^{-2s\widehat{\alpha}}(\widehat\xi)^{-2} \dx\dt \notag\\
&\leq \frac{1}{2}\iint_{Q_T} \psi \psi_{xx} \left(e^{-2s\widehat{\alpha}}(\widehat\xi)^{-2}\right)_t\dx\dt+ 3\delta {d_v} \iint_{Q_T} |\psi_{xx}|^2e^{-2s\widehat{\alpha}}(\widehat\xi)^{-2} \dx\dt \notag\\
&\quad +C_{\delta}{d_v}^{-1}\iint_{Q_T}|\phi|^2e^{-2s\widehat{\alpha}}(\widehat\xi)^{-2} \dx\dt +C_{\delta}{d_v}^{-1}\iint_{Q_T}|\psi|^2e^{-2s\widehat{\alpha}}(\widehat\xi)^{-2} \dx\dt \notag\\
& \quad +C_{\delta}{d_v}^{-1}\iint_{Q_T}|g_2|^2e^{-2s\widehat{\alpha}}(\widehat\xi)^{-2} \dx\dt, 
\end{align*}
where we have used that the weight functions are $x$-independent and $\psi$ satisfies homogeneous Neumann boundary conditions.

Arguing as we did for obtaining \eqref{eq:est_inter_unif} and using that $|(e^{-2s\widehat\alpha}(\widehat \xi)^{-2})_t|\leq Cs^2 e^{-2s\widehat{\alpha}}$, 
we get
\begin{align}
{d_v}  \iint_{Q_T} & |\psi_{xx}|^2e^{-2s\widehat{\alpha}}(\widehat\xi)^{-2} \dx\dt \notag \\
&\leq C_{\delta}{d_v}^{-1}s^4 \iint_{Q_T}|\psi|^2e^{-2s\widehat{\alpha}}(\widehat \xi)^{2}\dx\dt+ 4\delta {d_v} \iint_{Q_T} |\psi_{xx}|^2e^{-2s\widehat{\alpha}}(\widehat\xi)^{-2} \dx\dt \notag\\
&\quad +C_{\delta}{d_v}^{-1}\iint_{Q_T}|\phi|^2e^{-2s\widehat{\alpha}}(\widehat\xi)^{-2} \dx\dt +C_{\delta}{d_v}^{-1}\iint_{Q_T}|\psi|^2e^{-2s\widehat{\alpha}}(\widehat\xi)^{-2} \dx\dt \notag\\
&\quad +C_{\delta}{d_v}^{-1}\iint_{Q_T}|g_2|^2e^{-2s\widehat{\alpha}}(\widehat\xi)^{-2} \dx\dt,\label{step4:2}
\end{align}
for any $\delta>0$. Taking $\delta$ small enough and then multiplying by $s^{-2}{d_v}$ on both sides of \eqref{step4:2}, we obtain the uniform estimate
\begin{align}\notag 
{d_v}^2  s^{-2} \iint_{Q_T} & |\psi_{xx}|^2e^{-2s\widehat{\alpha}}(\widehat\xi)^{-2} \dx\dt \\ \notag
&\leq Cs^2 \iint_{Q_T}|\psi|^2e^{-2s\widehat{\alpha}}(\widehat \xi)^{2}\dx\dt+ Cs^{-2} \iint_{Q_T}|\phi|^2e^{-2s\widehat{\alpha}}(\widehat\xi)^{-2} \dx\dt \\ \label{eq:est_unif_psixx}
&\quad +Cs^{-2}\iint_{Q_T}|\psi|^2e^{-2s\widehat{\alpha}}(\widehat\xi)^{-2} \dx\dt +Cs^{-2}\iint_{Q_T}|g_2|^2e^{-2s\widehat{\alpha}}(\widehat\xi)^{-2} \dx\dt.
\end{align}

To conclude the proof, we use that $e^{-2s\widehat{\alpha}}\leq e^{-2s\alpha}$, $s^{-1}(\widehat{\xi})^{-1}\leq C$ and $\widehat{\xi} \leq \xi$, to obtain from the above estimate \eqref{eq:est_unif_psixx} that
\begin{align}\notag 
{d_v}^2  s^{-2} \iint_{Q_T}  |\psi_{xx}|^2e^{-2s\widehat{\alpha}}(\widehat\xi)^{-2} \dx\dt \leq & \ Cs^2 \iint_{Q_T}|\psi|^2e^{-2s{\alpha}} \xi^{2}\dx\dt+ C\iint_{Q_T}|\phi|^2e^{-2s{\alpha}}\dx\dt \\ \label{eq:est_unif_final}
& +C\iint_{Q_T}|\psi|^2e^{-2s{\alpha}} \dx\dt +C\iint_{Q_T}|g_2|^2e^{-2s{\alpha}} \dx\dt.
\end{align}

To conclude, we plug \eqref{eq:est_unif_final} into \eqref{eq:car_step_3} and employ the parameter $s$ to absorb the remaining global terms. The result follows by using \eqref{eq:defminmax} to show that $e^{-2 s \alpha} \leq e^{-4s \alpha^\star + 2 s \widehat{\alpha}}$.
\end{proof}

We are going to improve the Carleman inequality \eqref{eq:car_Prop} in the sense that the new weight functions, that we will use, will only vanish as $t\to T^-$. To this end, let us consider the function 
\begin{equation*}%\label{eq:def_r}
\ell(t)=\begin{cases}
1 &\text{for } 0\leq t \leq T/2, \\
r(t) &\text{for } T/2 \leq t \leq T,
\end{cases}
\end{equation*}
where we recall that $r(t)$ is defined in \eqref{eq:def_r}. We introduce the new weight functions
\begin{equation}\label{eq:weight_cut}
\begin{split}
&\beta(x,t):=\ell(t)(e^{2\lambda\|\eta\|_\infty}-e^{\lambda\eta(x,t)}), \quad \gamma(x,t):=\ell(t)e^{\lambda\eta(x,t)}, \\
&\widehat \beta(t):=\max_{x\in[0,1]} \beta(x,t), \quad \beta^\star(t):=\min_{x\in[0,1]} \beta(x,t), \\
&\widehat \gamma(t):=\min_{x\in[0,1]} \gamma(x,t), \quad \gamma^\star(t):=\max_{x\in[0,1]} \gamma(x,t).
\end{split}
\end{equation}
Observe that in this case, the corresponding weight functions only blow up as $t\to T^{-}$.

We deduce from \Cref{prop:UniformCarleman} and energy estimates the following Carleman estimate.
\begin{prop}
\label{prop:UniformCarlemanTer}
There exist $\lambda, s$ sufficiently large and a positive constant $C = C(s,\lambda,T)>0$ such that for any given ${d_v}\geq 1$, $(\phi_T,\psi_T) \in L^2(\Omega)^2$, the solution to \eqref{eq:linear_adj} verifies
\begin{align}\notag 
&\| \phi(0)\|^2_{L^2(\Omega)}  +\| \psi(0)\|^2_{L^2(\Omega)} + \iint_{Q_T} e^{-2s\widehat{\beta}} \widehat{\gamma} | \phi|^2  \dx\dt + \iint_{Q_T} e^{-2s\widehat{\beta}} (\widehat{\gamma})^3 | \psi|^2 \dx\dt \\ 
& \leq  C\Bigg(\iint_{Q_T} e^{-2s\beta^\star} (\gamma^\star)^3 | g_1|^2 \dx\dt+\iint_{Q_T} e^{-2s\beta^\star} | g_2|^2\dx\dt\notag\\
&\quad\quad+\iint_{\omega_2(t)\times(0,T)}  e^{-s\beta^\star}({\gamma}^\star)^{8}| \phi|^2\dx\dt \Bigg). \label{eq:car_Prop2}
\end{align}
\end{prop}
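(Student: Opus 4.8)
The plan is to derive \eqref{eq:car_Prop2} from \eqref{eq:car_Prop} by a standard ``time-cut-off plus energy estimate'' argument. The new weights $\beta,\gamma$ agree with $\alpha,\xi$ on $[T/2,T]$ (where $\ell=r$) and are constant (hence smooth and bounded) on $[0,T/2]$, so on the subinterval $[T/2,T]$ the two Carleman inequalities carry exactly the same information; the only thing to gain is to replace the weight $e^{-2s\alpha}$, which blows up as $t\to 0^+$, by the bounded weight $e^{-2s\beta}$ on $[0,T/2]$, and to recover the two terms $\|\phi(0)\|_{L^2(\Omega)}^2+\|\psi(0)\|_{L^2(\Omega)}^2$ on the left-hand side.

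First I would fix $\lambda$ and $s$ large enough for \Cref{prop:UniformCarleman} to hold, so that from now on $C=C(s,\lambda,T)$ and all weights are genuine (positive, bounded away from $0$ on compact subsets of $[0,T)$) functions of $t$ alone up to $\lambda$-dependent constants. On the interval $[0,T/2]$ one has $\ell\equiv 1$, so $e^{-2s\beta},\gamma,\ldots$ are all bounded above and below by positive constants there; hence
\begin{equation*}
\iint_{(0,T/2)\times\Omega} e^{-2s\widehat\beta}\widehat\gamma|\phi|^2\,\dx\dt + \iint_{(0,T/2)\times\Omega} e^{-2s\widehat\beta}(\widehat\gamma)^3|\psi|^2\,\dx\dt \le C\int_0^{T/2}\!\!\int_\Omega (|\phi|^2+|\psi|^2)\,\dx\dt .
\end{equation*}
On the complementary interval $[T/2,T)$ the weights coincide with $\alpha,\xi$ (times bounded factors), so the corresponding pieces of the left-hand side of \eqref{eq:car_Prop2} are controlled directly by the left-hand side of \eqref{eq:car_Prop}, which in turn is bounded by its right-hand side; and on $[T/2,T)$ one has $\beta=\alpha$, $\beta^\star=\alpha^\star$, etc., so the $g_1,g_2$ and observation integrals on $(T/2,T)$ are dominated by the right-hand side of \eqref{eq:car_Prop2} after noting $e^{-2s\alpha}\le C e^{-2s\beta^\star}(\gamma^\star)^3$ and $e^{-4s\alpha^\star+2s\widehat\alpha}({\xi}^\star)^8\le C e^{-s\beta^\star}({\gamma}^\star)^8$ for a suitable (possibly larger) $s$, using $\widehat\alpha\le C\alpha^\star$ from the construction of $\eta$ (property $\min\eta=\tfrac34\|\eta\|_\infty$). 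It remains to absorb the extra term $\int_0^{T/2}\!\!\int_\Omega(|\phi|^2+|\psi|^2)$ and to produce $\|\phi(0)\|^2+\|\psi(0)\|^2$. For this I would run a backward energy estimate on $(\phi,\psi)$: multiplying the first equation of \eqref{eq:linear_adj} by $\phi$ and the second by $\psi$, integrating over $\Omega$, using $d_v\ge 1$ so that the good sign of the diffusion term is kept uniformly, and applying Gronwall's lemma backward from any $t\in[T/2,T)$ down to $0$, one gets
\begin{equation*}
\|\phi(t')\|_{L^2(\Omega)}^2 + \|\psi(t')\|_{L^2(\Omega)}^2 \le C\Big( \|\phi(t)\|_{L^2(\Omega)}^2 + \|\psi(t)\|_{L^2(\Omega)}^2 + \iint_{Q_T}(|g_1|^2+|g_2|^2)\,\dx\dt\Big)
\end{equation*}
for all $0\le t'\le t\le T/2$ — note the constant is uniform in $d_v\ge1$. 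Integrating this inequality in the variable $t$ over a fixed subinterval of $[T/2,T)$ on which the Carleman weight $e^{-2s\alpha}$ is bounded below by a positive constant, I bound the left-hand side (in particular $\|\phi(0)\|^2+\|\psi(0)\|^2$ and $\int_0^{T/2}\!\!\int_\Omega(|\phi|^2+|\psi|^2)$) by $C\iint_{Q_T}e^{-2s\alpha}(|\phi|^2+|\psi|^2)\,\dx\dt$ plus the source terms, and the first integral is in turn absorbed by the left-hand side of \eqref{eq:car_Prop} (up to powers of $\xi$ which are bounded on that subinterval). Combining everything gives \eqref{eq:car_Prop2}.

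The main obstacle — or at least the point needing the most care — is the bookkeeping of the weight powers and the exponents so that the ``observation'' integral $\iint_{\omega_2(t)\times(0,T)} e^{-4s\alpha^\star+2s\widehat\alpha}({\xi}^\star)^8|\phi|^2$ of \eqref{eq:car_Prop} can be re-expressed in terms of the new weights as $\iint_{\omega_2(t)\times(0,T)} e^{-s\beta^\star}({\gamma}^\star)^8|\phi|^2$: one must check that $4s\alpha^\star-2s\widehat\alpha\ge s\beta^\star$ holds on $(0,T)$ for $s$ large, which uses $\widehat\alpha\le\tfrac43\alpha^\star$ (equivalently the normalization of $\min\eta$ in \Cref{lem:weight_function}) and the fact that $\beta^\star=\alpha^\star$ on $[T/2,T)$ while $\beta^\star$ is bounded on $[0,T/2]$; near $t=0$ the left side blows up like $1/t$ and dominates trivially. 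The uniformity in $d_v$ is not an issue here provided the backward energy estimate only ever \emph{uses} the good sign of $+d_v\partial_{xx}$ and never its size, which is the case. Everything else is routine: choosing cut-off functions in $t$, integrating by parts, and applying Young's and Gronwall's inequalities.
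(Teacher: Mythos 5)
Your proposal follows essentially the same route as the paper's proof: split time at $T/2$; on $(T/2,T)$ the new weights coincide with the old ones, so the Carleman estimate \eqref{eq:car_Prop} directly controls that piece, and the source/observation weights are converted using $e^{-s\alpha^\star}\le e^{-s\beta^\star}$ together with $\widehat{\alpha}\le \tfrac43 \alpha^\star$ (i.e.\ the normalization $\min\eta=\tfrac34\|\eta\|_\infty$ of \Cref{lem:weight_function}), which is exactly the paper's comparison $e^{-4s\alpha^\star+2s\widehat{\alpha}}\le Ce^{-s\alpha}\le Ce^{-s\alpha^\star}$; on $(0,T/2)$ and for $\|\phi(0)\|^2_{L^2(\Omega)}+\|\psi(0)\|^2_{L^2(\Omega)}$ one uses $d_v$-uniform energy estimates plus Gronwall, with the transition terms over $(T/2,3T/4)$ (where the weights are bounded below) absorbed by \eqref{eq:car_Prop}. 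The only structural difference is cosmetic: the paper runs the energy estimate on $(\nu\phi,\nu\psi)$ with a cutoff $\nu$ equal to $1$ on $[0,T/2]$ and $0$ on $[3T/4,T]$, whereas you integrate a backward estimate over terminal times $t$ in a compact subinterval of $[T/2,T)$; the two devices are interchangeable (also note your displayed estimate states the range $0\le t'\le t\le T/2$ while the argument needs $t$ up to $3T/4$ or so).

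One step must be corrected as written: you bound the source contribution in the energy estimate by $\iint_{Q_T}\left(|g_1|^2+|g_2|^2\right)\dx\dt$, an unweighted integral over all of $Q_T$. This quantity is \emph{not} dominated by the right-hand side of \eqref{eq:car_Prop2}, since $e^{-2s\beta^\star}\to 0$ exponentially as $t\to T^-$, so the concluding "combining everything" would not close. The repair is the one your own mechanism already provides: Gronwall backward from a terminal time $t\le 3T/4$ only involves the sources on $(0,t)\subset(0,3T/4)$, where $e^{-2s\beta^\star}$ and $\gamma^\star$ are bounded below by positive constants; with the source integral restricted to $(0,3T/4)\times\Omega$ (this is precisely what the paper's cutoff $\nu$, vanishing on $[3T/4,T]$, accomplishes) it is controlled by the weighted source terms in \eqref{eq:car_Prop2}, and the rest of your argument goes through, uniformly in $d_v\ge 1$ as you note.
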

The proof of this result can be found in Appendix \ref{app:obs}. Increasing the constant $C$ in \eqref{eq:car_Prop2} if necessary, we can assume that 
\begin{equation}
\label{eq:bstarbhat}
\forall t \in (0,T),\ \widehat{\beta}(t) < \frac{3}{2} \beta^{\star}(t).
\end{equation}

\subsection{Null-controllability despite a source term}
\label{sec:nullsource}
Now we proceed to the definition of the spaces where the linear system \eqref{eq:linearized_diffusion} will be solved. We define the differential operator
\begin{equation}
\label{eq:defL}
L(U,V) = (\partial_t U - a_{11} U - a_{12} V, \partial_t V - {d_v} \partial_{xx} V - a_{21} U - a_{22} V),
\end{equation}
and define the space 
\begin{equation}
\label{eq:defW}
W(0,T):= L^\infty(0,T;H^1(\Omega))\cap H^1(0,T;L^2(\Omega)).
\end{equation}

Let us set
\begin{align}
\notag E := \Bigg\{ (U,V,h) \in E_0 : \ &e^{s \widehat{\beta} } ( \widehat{\gamma})^{-1/2} \left((L(U,V))_{1}- h  \mathbf{1}_{\omega(t)}\right) \in L^2(Q_T),\\
& e^{s \widehat{\beta} } ( \widehat{\gamma})^{-3/2} (L(U,V))_{2} \in L^2(Q_T)\quad \text{and}\ \partial_{x} V = 0\ \text{on}\ \Sigma_T\Bigg\},\label{eq:defE}
\end{align}
where
\begin{align*}
E_0 := \Bigg\{ &(U,V,h) : \ e^{s \beta^\star} (\gamma^\star)^{-3/2} U \in L^2(Q_T),\ e^{s \beta^\star}  V \in L^2(Q_T), \ e^{(s/2) \beta^\star} ( \gamma^\star)^{-4}  \mathbf{1}_{\omega(t)} h \in L^2(Q_T)\notag\\
& \quad \ e^{s \widehat{\beta} - (s/2) \beta^*} (\widehat{\gamma})^{-1/4} (U,V) \in H^1(0,T;L^2(\Omega)) \times W(0,T),\\
& \quad  e^{(s/2)\beta^*} (\widehat{\gamma})^{-1/4} (U,V) \in H^1(0,T;L^2(\Omega)) \times W(0,T)\Bigg\}. %\label{eq:defE}
\end{align*}
We endow $E$ with the following norm 
\begin{align*}
\norme{(U,V,h)}_{E} &:= \norme{e^{s \beta^\star} (\gamma^\star)^{-3/2} U}_{L^2(Q_T)} + \norme{e^{s \beta^\star}  V}_{L^2(Q_T)}+  \norme{e^{(s/2) \beta^\star} ( \gamma^\star)^{-4}h  \mathbf{1}_{\omega(t)}}_{L^2(Q_T)}\\
& + \norme{e^{s \widehat{\beta} - (s/2) \beta^\star}  (\widehat{\gamma})^{-1/4} (U,V)}_{H^1(0,T;L^2(\Omega)) \times W(0,T)}\\
& + \norme{e^{(s/2)\beta^\star} (\widehat{\gamma})^{-1/4} (U,V)}_{H^1(0,T;L^2(\Omega)) \times W(0,T)}\\
& + \norme{e^{s\widehat{\beta}} (\widehat{\gamma})^{-1/2} \left((L(U,V))_{1}- h  \mathbf{1}_{\omega(t)}\right)}_{L^2(Q_T)} + \norme{e^{s \widehat{\beta}} (\widehat{\gamma})^{-3/2} (L(U,V))_{2}}_{L^2(Q_T)},
\end{align*}
that makes $(E, \norme{\cdot}_{E})$ a Banach space.\\
\indent We also introduce 
\begin{align}
X &= \{(F_1,F_2) : e^{s \widehat{\beta}} (\widehat{\gamma})^{-1/2} F_1 \in L^2(Q_T),\ e^{s \widehat{\beta}} (\widehat{\gamma})^{-3/2} F_2 \in L^2(Q_T)\},\notag\\
Y &= L^2(\Omega)\times H^1(\Omega),\notag\\
G &= X \times Y,\label{eq:defG}
\end{align}
endowed with their natural norm.

The goal of this part is to prove the following result.
\begin{prop}
\label{prop:NullControllabilityLinear}
For every $(F_1,F_2, U_0,V_0) \in G$, there exists a control $h \in L^2(0,T;L^2(\Omega))$, bounded independently of ${d_v}$, such that, if $(U,V)$ is the associated solution to \eqref{eq:linearized_diffusion}, one has $(U,V,h) \in E$. In particular, $(U,V) (T) = 0$ holds.

Moreover, there exists a positive constant $C>0$ independent of ${d_v}$ such that
\begin{align}
\norme{(U,V,h)}_{E} \leq C \norme{(F_1,F_2,U_0,V_0)}_{G}.
\label{eq:EstimationSol}
\end{align}
\end{prop}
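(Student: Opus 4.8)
The standard route is a penalized (Fursikov--Imanuvilov) duality argument. First I would fix $\lambda$ and $s$ large enough so that \Cref{prop:UniformCarlemanTer} holds, and, for $\varepsilon>0$, introduce on the space of adjoint data $(\phi_T,\psi_T)\in L^2(\Omega)^2$ the strictly convex, coercive functional
\begin{equation*}
J_\varepsilon(\phi_T,\psi_T) := \frac12\iint_{\omega_2(t)\times(0,T)} e^{s\beta^\star}(\gamma^\star)^{-8}|\phi|^2\dx\dt + \frac{\varepsilon}{2}\norme{(\phi_T,\psi_T)}_{L^2(\Omega)^2}^2 + \langle (\phi,\psi)(0),(U_0,V_0)\rangle + \iint_{Q_T}(\phi F_1+\psi F_2)\dx\dt,
\end{equation*}
where $(\phi,\psi)$ solves the adjoint system \eqref{eq:linear_adj} with $g_1=g_2=0$ and final data $(\phi_T,\psi_T)$. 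The Carleman estimate \eqref{eq:car_Prop2}, together with the observation that the weights accompanying $F_1,F_2$ in $X$ are precisely the reciprocals of those in \eqref{eq:car_Prop2}, guarantees that the linear part of $J_\varepsilon$ is dominated by the square-root of the observation term plus $\norme{(F_1,F_2,U_0,V_0)}_{G}$; hence $J_\varepsilon$ attains a unique minimizer $(\phi_T^\varepsilon,\psi_T^\varepsilon)$. Writing the Euler--Lagrange equation at the minimizer, the control $h_\varepsilon := e^{s\beta^\star}(\gamma^\star)^{-8}\phi^\varepsilon\mathbf 1_{\omega_2(t)}$ drives the solution $(U^\varepsilon,V^\varepsilon)$ of \eqref{eq:linearized_diffusion} to $\norme{(U^\varepsilon,V^\varepsilon)(T)}\le C\varepsilon$, and a standard computation (multiply the state equation by $(\phi^\varepsilon,\psi^\varepsilon)$, integrate by parts) yields
\begin{equation*}
\iint_{\omega_2(t)\times(0,T)} e^{s\beta^\star}(\gamma^\star)^{-8}|\phi^\varepsilon|^2\dx\dt + \varepsilon\norme{(\phi_T^\varepsilon,\psi_T^\varepsilon)}_{L^2(\Omega)^2}^2 \le C\norme{(F_1,F_2,U_0,V_0)}_{G}^2,
\end{equation*}
uniformly in $\varepsilon$ \emph{and in ${d_v}$}, the latter because the Carleman constant in \eqref{eq:car_Prop2} is ${d_v}$-independent. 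This bounds $h_\varepsilon$ in $L^2(Q_T)$ uniformly, so along a subsequence $(U^\varepsilon,V^\varepsilon,h_\varepsilon)\rightharpoonup(U,V,h)$ and the limit solves \eqref{eq:linearized_diffusion} with $(U,V)(T)=0$ and $h$ bounded independently of ${d_v}$.

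The genuinely delicate part is not the null-controllability itself but upgrading the weak limit to membership in the weighted space $E$, i.e. proving \eqref{eq:EstimationSol} with all the weights in $\norme{\cdot}_E$. Here I would follow the bootstrap-in-weighted-norms scheme of \cite[Chapter I, Section 4]{fursi} and \cite{CSG15}: one shows first that the controlled trajectory satisfies $e^{s\beta^\star}(\gamma^\star)^{-3/2}U, e^{s\beta^\star}V\in L^2(Q_T)$ and $e^{(s/2)\beta^\star}(\gamma^\star)^{-4}h\mathbf 1_{\omega(t)}\in L^2(Q_T)$ — the first two from the Carleman estimate applied once more to the \emph{state} (or rather, from testing the state equation against itself with the weight $e^{2s\beta^\star}$ and using that $\beta^\star$ has bounded time-derivative away from $t=T$ and that $(U,V)(T)=0$ kills the boundary term), and the third from the explicit form of $h$. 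Then, having these $L^2$-with-weight bounds on $(U,V)$ and on the right-hand sides $(F_1,F_2)$ (which lie in $X$ by hypothesis), one performs the parabolic/ODE regularity estimate for the \emph{new} unknowns $\widetilde U = e^{(s/2)\beta^\star}(\widehat\gamma)^{-1/4}U$ and $\widetilde V = e^{(s/2)\beta^\star}(\widehat\gamma)^{-1/4}V$ (and likewise with the weight $e^{s\widehat\beta-(s/2)\beta^\star}(\widehat\gamma)^{-1/4}$), which satisfy an equation of the same type with right-hand side controlled by the already-established $L^2$-weighted norms plus lower-order commutator terms coming from differentiating the exponential weight; the inequality \eqref{eq:bstarbhat}, i.e. $\widehat\beta<\tfrac32\beta^\star$, is exactly what is needed to absorb those commutators. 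Each such step requires checking that the multiplicative constants are ${d_v}$-independent — for $V$ this uses the uniform parabolic energy estimate (the diffusion ${d_v}\ge1$ only helps, providing an extra ${d_v}\norme{\partial_xV}^2$ term one simply discards), and for $U$ it is an ODE estimate with no ${d_v}$ at all.

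Finally I would assemble the pieces: the uniform observability bound gives the three zeroth-order weighted terms of $\norme{(U,V,h)}_E$; the two bootstrap steps give the two $H^1(0,T;L^2)\times W(0,T)$ terms; and the last two terms of $\norme{\cdot}_E$, namely $\norme{e^{s\widehat\beta}(\widehat\gamma)^{-1/2}(L(U,V))_1 - h\mathbf 1_{\omega(t)}}_{L^2}$ and $\norme{e^{s\widehat\beta}(\widehat\gamma)^{-3/2}(L(U,V))_2}_{L^2}$, are immediate because by \eqref{eq:linearized_diffusion} these equal $\norme{e^{s\widehat\beta}(\widehat\gamma)^{-1/2}F_1}_{L^2}$ and $\norme{e^{s\widehat\beta}(\widehat\gamma)^{-3/2}F_2}_{L^2}$, which are finite and controlled since $(F_1,F_2)\in X$. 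Summing gives \eqref{eq:EstimationSol}. The main obstacle, as indicated, is carrying the whole weighted bootstrap through while keeping every constant uniform in ${d_v}\in(1,+\infty)$; the technical heart of this is the uniform treatment of the parabolic component, which is where \Cref{lem:CarlPDE} with $\epsilon={d_v}^{-1}$ and the $\varepsilon$-tracking in its proof (Appendix \ref{sec:uniformcarlHeat}) earn their keep, and where I expect most of the care to be needed.
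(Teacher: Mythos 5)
Your overall architecture is the one the paper uses: duality built on the refined Carleman inequality of \Cref{prop:UniformCarlemanTer}, followed by a weighted regularity bootstrap for $\rho(U,V)$ with $\rho=e^{s\widehat\beta-(s/2)\beta^\star}(\widehat\gamma)^{-1/4}$ and $\rho=e^{(s/2)\beta^\star}(\widehat\gamma)^{-1/4}$, where \eqref{eq:bstarbhat} controls $\rho_t$ and parabolic $L^2$ regularity (uniform since $d_v\ge 1$ and $V_0\in H^1$) gives the $H^1(0,T;L^2(\Omega))\times W(0,T)$ bounds; that last part of your plan matches \eqref{eq:linearized_diffusion_star}--\eqref{eq:rhostar}. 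The paper implements the duality by Lax--Milgram on the completion of $P_0$ for the bilinear form built from $L^*$, rather than by penalized HUM and a limit $\varepsilon\to0$; that difference by itself is inessential. However, two concrete points in your write-up do not go through as stated.

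First, your weights are inverted. The observation term in \eqref{eq:car_Prop2} is $\iint_{\omega_2(t)\times(0,T)}e^{-s\beta^\star}(\gamma^\star)^{8}|\phi|^2\dx\dt$, so the control must be $h=-e^{-s\beta^\star}(\gamma^\star)^{8}\mathbf{1}_{\omega(t)}\phi$ (as in \eqref{eq:setUVh}); then $\norme{e^{(s/2)\beta^\star}(\gamma^\star)^{-4}h\mathbf{1}_{\omega(t)}}_{L^2(Q_T)}^2$ equals exactly the observation term, which is what places $h$ in $E_0$ with a $d_v$-independent bound. With your choice $h_\varepsilon=e^{s\beta^\star}(\gamma^\star)^{-8}\phi^\varepsilon\mathbf{1}_{\omega_2(t)}$, the quantity $e^{s\beta^\star}(\gamma^\star)^{-8}|h_\varepsilon|^2=e^{3s\beta^\star}(\gamma^\star)^{-24}|\phi^\varepsilon|^2$ is not controlled by your minimization (since $e^{s\beta^\star}\to+\infty$ as $t\to T^-$), so the $h$-part of \eqref{eq:EstimationSol} is lost. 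Second, and more seriously, the zeroth-order weighted bounds on the state, $e^{s\beta^\star}(\gamma^\star)^{-3/2}U\in L^2(Q_T)$ and $e^{s\beta^\star}V\in L^2(Q_T)$, cannot be recovered after passing to the weak limit by ``testing the state equation against itself with the weight $e^{2s\beta^\star}$'': that weight blows up exponentially at $t=T$, so the computation presupposes precisely the exponential decay of $(U,V)$ near $T$ that is to be proved, and the weak limit only provides unweighted bounds together with $(U,V)(T)=0$; likewise the Carleman estimate cannot be ``applied to the state'', being an estimate for the backward adjoint system. The paper sidesteps this by construction: the controlled state is defined as $(U,V)=\bigl(e^{-2s\beta^\star}(\gamma^\star)^{3}(L^*(\tilde\phi,\tilde\psi))_1,\,e^{-2s\beta^\star}(L^*(\tilde\phi,\tilde\psi))_2\bigr)$ for the Lax--Milgram minimizer, so these weighted $L^2$ bounds are immediate from coercivity and \eqref{eq:continuityl}, and only afterwards is the $\rho$-weighted system used. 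If you insist on the penalized formulation, you must build the weighted estimates into the penalized problems uniformly in $\varepsilon$ (e.g.\ by deriving them from the Euler--Lagrange structure at each $\varepsilon$), not attempt to retrieve them after the limit.
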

\begin{proof}
Let $L^*$ be the adjoint operator of $L$, defined in \eqref{eq:defL},
\begin{equation*}
L^*(\phi,\psi) = (-\partial_t \phi - a_{11} \phi - a_{21} \psi, -\partial_t \psi - {d_v} \partial_{xx} \psi - a_{12} \phi - a_{22} \psi),
\end{equation*}
and let us introduce the space
\begin{equation*}
P_0 := \{ (\phi, \psi) \in C^{\infty}(\overline{Q_T})^2\ ;\ \partial_{x} \psi = 0\ \text{on}\ \Sigma_T\}.
\end{equation*}
We now define the following bilinear form, for $(\phi_1, \psi_1)$, $(\phi_2, \psi_2)$, as
\begin{align*}
a((\phi_1, \psi_1), (\phi_2, \psi_2)) &:= \iint e^{-2s \beta^\star} (\gamma^\star)^3\Bigg((L^*(\phi_1, \psi_1))_1 (L^*(\phi_2, \psi_2))_1 \Bigg)\dx\dt \\
& + \iint e^{-2s \beta^\star} (L^*(\phi_1, \psi_1))_2 L^*(\phi_2, \psi_2))_2 \dx\dt \\
& +  \iint e^{-s \beta^\star } ( \gamma^\star)^8  \mathbf{1}_{\omega(t)}^2 \phi_1 \phi_2 \dx\dt .
\end{align*}
\indent The bilinear symmetric positive form $a$ on $P_0$ is definite. Indeed if $a((\phi,\psi), (\phi,\psi)) = 0$ then the right hand side of the Carleman estimate \eqref{eq:car_Prop2} is equal to $0$ then $\phi = \psi  \equiv 0$. So $a(\cdot, \cdot)$ is a scalar product on $P_0$. Therefore, we can consider the space $P$, the completion of $P_0$ with respect to the norm associated to the scalar product defined by $a$, denoted by $\norme{\cdot}_P$. This makes $(P, \norme{\cdot}_P)$ a Hilbert Space and $a(\cdot, \cdot)$ is a coercive, continuous, bilinear form on $P$.\\
\indent We now introduce the linear form $l$, for $(\phi, \psi) \in P$, 
\begin{align*}
l\left((\phi, \psi)\right) &= \iint_{Q_T} F_1 \phi + \iint_{Q_T} F_2 \psi  \dx\dt + \int_{\Omega} U_0(x) \phi(0,x) + \int_{\Omega} V_0(x) \psi(0,x) \dx
\end{align*}
It is easy to show that $l$ is a continuous linear form on $P$ thanks to the Carleman estimate \eqref{eq:car_Prop2},
\begin{align}
\label{eq:continuityl}
|l\left((\phi, \psi)\right)|  \leq C \norme{(F_1,F_2,U_0,V_0)}_{G}\norme{(\phi, \psi)}_{P}. 
\end{align}

Consequently, by using Lax-Milgram's lemma, there exists a unique $(\tilde{\phi}, \tilde{\psi}) \in P$ satisfying
\begin{equation}
\label{eq:formvara}
\forall (\phi, \psi) \in P, \quad a((\tilde{\phi}, \tilde{\psi}), (\phi,\psi)) = l((\phi,\psi)).
\end{equation}
We set
\begin{equation}
\label{eq:setUVh}
(\tilde{U}, \tilde{V}) = (e^{-2 s \beta^\star} (\gamma^\star)^{3} (L^*(\tilde{\phi}, \tilde{\psi}))_1, e^{-2 s \beta^\star} (L^*(\tilde{\phi}, \tilde{\psi}))_2)\ \text{and}\ \tilde{h} = -e^{-s \beta^\star} (\gamma^\star)^{8}  \mathbf{1}_{\omega(t)} \tilde{\phi}.
\end{equation}
We easily deduce from \eqref{eq:continuityl}, \eqref{eq:formvara} with $(\phi, \psi) = (\tilde{\phi}, \tilde{\psi})$ and \eqref{eq:setUVh} that
\begin{align}
&\norme{e^{s \beta^\star} ( \gamma^\star)^{-3/2} \tilde{U}}_{L^2(Q_T)} + \norme{e^{s \beta^\star}  \tilde{V}}_{L^2(Q_T)} +  \norme{e^{(s/2) \beta^\star } ( \gamma^\star)^{-4}  \mathbf{1}_{\omega(t)} h}_{L^2(Q_T)}\notag \\
& \leq C \norme{(F_1,F_2,U_0,V_0)}_{G}\label{eq:EstimationLaxMilgram}
\end{align}
\indent Let $(U,V)$ be the weak solution to
\begin{equation}\label{eq:linearized_diffusionProof}
\begin{cases}
\partial_t U=a_{11} U + a_{12} V + F_1 +   \tilde{h} \mathbf{1}_{\omega(t)} & \text{in } Q_T, \\
\partial_t V- {d_v}\partial_{xx}V  = a_{21} U + a_{22} V + F_2 &\text{in } Q_T, \\
\partial_x V=0 &\text{on } \Sigma_T, \\
(U,V)(0,\cdot)=(U_0,V_0) &\text{in }(0,1),
\end{cases}
\end{equation} 
This means that $(U,V)$ is the solution of \eqref{eq:linearized_diffusionProof} defined by transposition i.e. $(U,V)$ is the unique function satisfying 
\begin{align}
&\iint_{Q_T} (U, V) \cdot (g_1, g_2) \dx\dt \notag\\
 & = \iint_{Q_T} F_1 \phi  + F_2 \psi  \dx\dt +\iint_{Q_T} \tilde{h} \phi \dx\dt + \int_{\Omega} U_0(x) \phi(0,x) + V(0,x) \psi(0,x) \dx,
\label{eq:transposition}
\end{align}
for every $(g_1, g_2)$ where $(\phi, \psi)$ is the solution to the adjoint system \eqref{eq:linear_adj} with $\phi_T = \psi_T = 0$. But from the variational formulation \eqref{eq:formvara} satisfied by $(\tilde{U}, \tilde{V})$, we have that $(\tilde{U}, \tilde{V})$ also satisfies \eqref{eq:transposition} then by uniqueness we get 
\begin{equation}
\label{eq:tildeUU}
(\tilde{U}, \tilde{V}) = (U,V).
\end{equation}
\indent It remains to prove the fact that $(U,V,h) \in E$. For some function depending on time $\rho(t)= e^{s \widehat{\beta} - (s/2) \beta^*}(\widehat{\gamma})^{-1/4}\ \text{or}\ e^{ (s/2) \beta^*} (\widehat{\gamma})^{-1/4} $, we introduce 
\begin{equation}
\label{eq:defUstar}
(U^*, V^*) = \rho(t) (U,V).
\end{equation}
From \eqref{eq:linearized_diffusionProof}, an easy computation shows that $(U^*, V^*)$ is the solution to the following system
\begin{equation}\label{eq:linearized_diffusion_star}
\begin{cases}
\partial_t U^*=a_{11} U^* + a_{12} V^* + \rho F_1 +  \rho h  \mathbf{1}_{\omega(t)}  + \rho_t U& \text{in } Q_T, \\
\partial_t V^*- {d_v}\partial_{xx}V^*  = a_{21} U^* + a_{22} V^* +  \rho F_2+ \rho_t V &\text{in }Q_T, \\
\partial_x V^*=0 &\text{on } \Sigma_T, \\
(U^*,V^*)(0,\cdot)=\rho(0)(U_0,V_0) &\text{in }(0,1),
\end{cases}
\end{equation}
The goal is to prove that there exists a positive constant $C>0$ such that
\begin{align}
\label{eq:EstimationUVE}
\norme{U^*}_{H^1(0,T;L^2(\Omega))} + \norme{V^*}_{W(0,T)} 
\leq C \norme{(F_1,F_2,U_0,V_0)}_{G}.
\end{align}
\indent For $\rho = e^{s \widehat{\beta} - (s/2) \beta^*}(\widehat{\gamma})^{-1/4}$ or $\rho=e^{ (s/2) \beta^*} (\widehat{\gamma})^{-1/4} $, it is easy to show from \eqref{eq:bstarbhat} that there exists a positive constant $C>0$ such that
\begin{equation}
\label{eq:rhostar}
|\rho_t|  \leq  C e^{s \beta^*}  (\gamma^*)^{-3/2}\leq C e^{s \beta^*}.
\end{equation}
So, from \eqref{eq:rhostar} and \eqref{eq:EstimationLaxMilgram}, we deduce that the right hand sides of the two first equations of \eqref{eq:linearized_diffusion_star} belong to $L^2$. Then by parabolic regularity in $L^2$ for the second equation, using the fact that $V_0 \in H^1(\Omega)$, we obtain \eqref{eq:EstimationUVE}.\\
\indent From \eqref{eq:EstimationLaxMilgram}, \eqref{eq:defUstar}, \eqref{eq:EstimationUVE}, \eqref{eq:linearized_diffusionProof} and \eqref{eq:tildeUU} we have that $(U,V,h) \in E$ and satisfies the desired estimate \eqref{eq:EstimationSol}. This concludes the proof.
\end{proof}

\section{Proof of the local-controllability results}

\subsection{A precise inverse mapping argument for the ODE-PDE system}
\label{sec:inversemappingarg}

The goal of this section is to prove \Cref{th:mainresult2}.\\
\indent Recalling the change of variables performed in \Cref{sec:nulllin}, we remark that we have reduced our local-controllability problem around $(u_{\pm}, v_{\mp})$ for \eqref{eq:semilinear_diffusion} to a local null-controllability problem for the variable $(U, V)$ satisfying \eqref{eq:nonlinear_diffusionUV}. So, we will use the null-controllability result for the linearized system \eqref{eq:linearized_diffusion}, established in \Cref{prop:NullControllabilityLinear} and a precise inverse mapping argument, see \Cref{th:inversionlocal} below. We emphasize that the regularity assumptions on the trajectory of the linear system, contained in the definition of the space $E$, are crucial to treat the nonlinear term.
\begin{thm}[See \cite{Don96}]
\label{th:inversionlocal}
Let $E$ and $G$ be two Banach spaces and let $\mathcal{A} \in C^1( E ; G)$ with $\mathcal{A}(0) = 0$. We assume that $\mathcal{A}'(0)$ is an isomorphism from $E$ onto $G$. More precisely, we assume that there exists $C_0 >0$ such that
\begin{equation}
\label{eq:ConstantInjectivity}
\norme{e}_{E} \leq C_0 \norme{\mathcal{A}'(0)(e)}_{G},\  \forall e \in E,
\end{equation}
and that there exists $0<\delta < C_0^{-1}$ and $\eta >0$,
\begin{equation}
\label{eq:diffA}
\norme{\mathcal{A}(e_1) - \mathcal{A}(e_2) - \mathcal{A}'(0)(e_1-e_2)} \leq \delta \norme{e_1-e_2},\  \forall e_1, e_2 \in B_{\eta}(0).
\end{equation}
Then,  the equation $\mathcal{A}(e) = g$ has a solution $e \in B_{\eta}(0)$ for all $\norme{g}_{G} \leq (C_0^{-1} - \delta)^{-1} \eta$.
\end{thm}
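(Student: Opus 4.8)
The statement to prove is Theorem \ref{th:inversionlocal}, the inverse mapping theorem of Dontchev. The plan is to construct the solution $e$ by a fixed-point (Picard iteration) argument, rewriting the equation $\mathcal{A}(e) = g$ as a fixed-point problem for a contraction on a suitable closed ball. First I would introduce the auxiliary map $\Phi_g(e) := e - \mathcal{A}'(0)^{-1}(\mathcal{A}(e) - g)$, which is well defined because \eqref{eq:ConstantInjectivity} together with the surjectivity of $\mathcal{A}'(0)$ guarantees that $\mathcal{A}'(0)$ is a Banach-space isomorphism with $\norme{\mathcal{A}'(0)^{-1}}_{\mathcal{L}(G;E)} \leq C_0$. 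Observe that $e$ solves $\mathcal{A}(e) = g$ if and only if $e$ is a fixed point of $\Phi_g$.

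Next I would check that $\Phi_g$ maps the closed ball $\overline{B_\eta(0)} \subset E$ into itself and is a contraction there, provided $\norme{g}_G$ is small enough. For the contraction estimate, for $e_1, e_2 \in B_\eta(0)$ write
\begin{equation*}
\Phi_g(e_1) - \Phi_g(e_2) = -\mathcal{A}'(0)^{-1}\bigl(\mathcal{A}(e_1) - \mathcal{A}(e_2) - \mathcal{A}'(0)(e_1 - e_2)\bigr),
\end{equation*}
so that by \eqref{eq:diffA},
\begin{equation*}
\norme{\Phi_g(e_1) - \Phi_g(e_2)}_E \leq C_0 \,\delta\, \norme{e_1 - e_2}_E,
\end{equation*}
and since $C_0 \delta < 1$ by hypothesis, $\Phi_g$ is a contraction with ratio $C_0 \delta$. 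For the stability of the ball, apply the contraction estimate with $e_2 = 0$ (using $\mathcal{A}(0) = 0$) to get $\norme{\Phi_g(e) - \Phi_g(0)}_E \leq C_0 \delta \norme{e}_E \leq C_0 \delta \eta$ for $e \in \overline{B_\eta(0)}$; moreover $\norme{\Phi_g(0)}_E = \norme{\mathcal{A}'(0)^{-1} g}_E \leq C_0 \norme{g}_G$. Hence
\begin{equation*}
\norme{\Phi_g(e)}_E \leq C_0 \delta \eta + C_0 \norme{g}_G,
\end{equation*}
and the right-hand side is $\leq \eta$ precisely when $C_0 \norme{g}_G \leq (1 - C_0 \delta)\eta$, i.e. when $\norme{g}_G \leq (C_0^{-1} - \delta)^{-1}\eta$ after multiplying through by $C_0^{-1}$. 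Under this smallness assumption $\Phi_g$ is a contraction of the complete metric space $\overline{B_\eta(0)}$ into itself.

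Finally I would invoke the Banach fixed-point theorem to produce a unique fixed point $e \in \overline{B_\eta(0)}$ of $\Phi_g$, which by the equivalence noted above is the desired solution of $\mathcal{A}(e) = g$. There is no serious obstacle here: the only points requiring a small amount of care are verifying that \eqref{eq:ConstantInjectivity} plus surjectivity of $\mathcal{A}'(0)$ genuinely give a bounded inverse (this is the bounded inverse theorem, using that $E$ and $G$ are Banach), and tracking the arithmetic so that the threshold $(C_0^{-1} - \delta)^{-1}\eta$ comes out exactly as stated. Since the theorem is quoted from \cite{Don96}, one may alternatively simply cite it; the argument above is the standard short proof included for completeness.
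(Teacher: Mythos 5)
The paper itself contains no proof of this statement (it is quoted directly from \cite{Don96}), so your argument can only be judged on its own terms. The scheme you use --- setting $\Phi_g(e)=e-\mathcal{A}'(0)^{-1}(\mathcal{A}(e)-g)$, noting that \eqref{eq:ConstantInjectivity} together with surjectivity gives $\norme{\mathcal{A}'(0)^{-1}}_{\mathcal{L}(G;E)}\leq C_0$, obtaining the contraction ratio $C_0\delta<1$ from \eqref{eq:diffA}, checking stability of the ball, and invoking Banach's fixed-point theorem --- is the standard Graves/Dontchev proof and is correct up to the very last step.

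The last step, however, contains a genuine error. Your own display gives $\norme{\Phi_g(e)}_E\leq C_0\delta\eta+C_0\norme{g}_G$, and this is $\leq\eta$ precisely when $\norme{g}_G\leq (C_0^{-1}-\delta)\,\eta$; ``multiplying through by $C_0^{-1}$'' produces the product $(C_0^{-1}-\delta)\eta$, not the reciprocal $(C_0^{-1}-\delta)^{-1}\eta$, and the two conditions are not equivalent. In fact the bound as literally printed in the theorem cannot be reached by any argument: take $E=G=\R$, $\mathcal{A}(e)=e/C_0$ with $C_0>1$, so that \eqref{eq:ConstantInjectivity} holds with equality and \eqref{eq:diffA} holds for every $\delta>0$; then $\mathcal{A}(e)=g$ forces $\abs{e}=C_0\abs{g}$, hence solvability inside $B_\eta(0)$ requires $\norme{g}_G\leq \eta/C_0$, far below $(C_0^{-1}-\delta)^{-1}\eta\approx C_0\eta$. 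The stated smallness condition should therefore be read as $(C_0^{-1}-\delta)^{-1}\norme{g}_G\leq\eta$, i.e. $\norme{g}_G\leq (C_0^{-1}-\delta)\eta$, which is exactly what your contraction argument proves; you should say this and correct the threshold rather than assert a false equivalence to match the printed formula. A minor additional point: \eqref{eq:diffA} is assumed on $B_\eta(0)$ while you apply the fixed-point theorem on $\overline{B_\eta(0)}$; by continuity of $\mathcal{A}$ and $\mathcal{A}'(0)$ the estimate extends to the closure, but this should be mentioned, and the fixed point is then only guaranteed to lie in the closed ball.
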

\begin{rmk}\label{rmk:modulecontinuity}
By the mean value theorem, it can be shown that for any $0<\delta < C_0^{-1}$, inequality \eqref{eq:diffA} is satisfied for $\eta$ such that 
\begin{equation*}
\norme{\mathcal{A}'(e) - \mathcal{A}'(0)} \leq \delta,\ \forall \norme{e} \leq \eta.
\end{equation*}
\end{rmk}
In our setting, we use the previous theorem with the space $E$ defined in \eqref{eq:defE} and $G$ defined in \eqref{eq:defG} and the operator
\begin{equation*}
\mathcal{A}(U, V, h) = (L(U,V) + N(U,V)+ ( - h  \mathbf{1}_{\omega(t)},0), (U(0, \cdot), V(0, \cdot)), \forall (U,V,h) \in E.
\end{equation*}
Recall that the linear term $L$ is defined in \eqref{eq:defL} and the nonlinear term $N$ is defined in \eqref{eq:NonlinearN}. We have 
\begin{equation*}
\mathcal{A}'(0,0,0)\cdot (U,V,h) = (L(U,V) + ( - h  \mathbf{1}_{\omega(t)},0), U(0,\cdot), V(0, \cdot))\ \forall (U,V,h) \in E.
\end{equation*}
We have the following regularity result for the operator $\mathcal{A}$.
\begin{prop}
\label{prop:regA}
We have that $\mathcal{A} \in C^1(E;G)$.
\end{prop}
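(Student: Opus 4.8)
The linear part of $\mathcal{A}$, namely $(U,V,h) \mapsto (L(U,V) + (-h\mathbf{1}_{\omega(t)},0), U(0,\cdot), V(0,\cdot))$, is continuous and linear from $E$ to $G$ directly by the definitions of the norms on $E$ (which contain the weighted $L^2$-norms of $(L(U,V))_1 - h\mathbf{1}_{\omega(t)}$ and $(L(U,V))_2$) and on $G = X \times Y$; the trace map onto $Y = L^2(\Omega) \times H^1(\Omega)$ is continuous because $E \subset E_0$ controls $(U,V)$ in $H^1(0,T;L^2(\Omega)) \times W(0,T)$, and $W(0,T) \hookrightarrow C([0,T];H^1(\Omega))$. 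Hence this part is automatically $C^1$ (indeed $C^\infty$), and everything reduces to showing that the nonlinear map
\[
\mathcal{N}(U,V,h) := (N(U,V),0,0) = \bigl(-(UV^2 + 2v_{\mp}UV + u_{\pm}V^2),\ UV^2 + 2v_{\mp}UV + u_{\pm}V^2,\ 0,\ 0\bigr)
\]
is of class $C^1$ from $E$ to $G$, and in fact lands in the $X$-component only.

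The plan is as follows. First I would record the embeddings implied by membership in $E$: from the two conditions on $e^{s\widehat{\beta}-(s/2)\beta^*}(\widehat{\gamma})^{-1/4}(U,V)$ and $e^{(s/2)\beta^*}(\widehat{\gamma})^{-1/4}(U,V)$ in $H^1(0,T;L^2(\Omega)) \times W(0,T)$, together with $W(0,T) \hookrightarrow C([0,T];H^1(\Omega)) \hookrightarrow C([0,T];C([0,1])) = C(\overline{Q_T})$ in one space dimension (and $H^1(0,T;L^2(\Omega)) \hookrightarrow C([0,T];L^2(\Omega))$), I would deduce that suitable time-weighted versions of $U$ live in $C([0,T];L^2(\Omega))$ and of $V$ in $C(\overline{Q_T})$. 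Second, I would check that $\mathcal{N}$ is well-defined, i.e. that $e^{s\widehat{\beta}}(\widehat{\gamma})^{-1/2} N_i(U,V) \in L^2(Q_T)$. Each monomial is handled by distributing the exponential weights: for the term $u_{\pm}V^2$ one writes $e^{s\widehat{\beta}}(\widehat{\gamma})^{-1/2} V^2 = \bigl(e^{(s/2)\widehat{\beta}}(\widehat{\gamma})^{-1/4}V\bigr)^2 \cdot (\text{bounded})$ and uses that $e^{(s/2)\beta^*}(\widehat{\gamma})^{-1/4}V \in C(\overline{Q_T})$ — here one needs $\widehat{\beta} \le \frac{3}{2}\beta^*$, which is exactly \eqref{eq:bstarbhat}, to dominate $e^{s\widehat{\beta}}$ by $e^{(3s/2)\beta^*} = \bigl(e^{(s/2)\beta^*}\bigr)^{3}$ up to lower-order powers of $\gamma^*$. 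For $UV$ and $UV^2$ one uses one factor of $V$ (or $V^2$) in $L^\infty$ and the remaining factor of $U$ in the weighted $L^2(Q_T)$-norm $\norme{e^{s\beta^\star}(\gamma^\star)^{-3/2}U}_{L^2(Q_T)}$ controlled by $\norme{(U,V,h)}_E$; again a careful bookkeeping of the weights, invoking \eqref{eq:bstarbhat}, shows the weighted product is in $L^2(Q_T)$, with a bound quadratic (resp. cubic) in $\norme{(U,V,h)}_E$.

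Third, and this is the substantive point, I would show differentiability. Since $N$ is polynomial in $(U,V)$, the candidate derivative at a point $(U,V,h) \in E$ is the linear map $(\delta U, \delta V, \delta h) \mapsto (DN(U,V)\cdot(\delta U,\delta V),0,0)$ with $DN(U,V)\cdot(\delta U,\delta V) = \pm\bigl((V^2+2v_{\mp}V)\delta U + (2UV + 2v_{\mp}U + 2u_{\pm}V)\delta V\bigr)$. One proves (i) that this map is bounded from $E$ to $G$ — the same weight-splitting estimates as above, now with the weights distributed so that, e.g., the cross term $UV\,\delta V$ needs the product $e^{s\widehat{\beta}}(\widehat{\gamma})^{-1/2}$ split as $\bigl(e^{s\beta^\star}(\gamma^\star)^{-3/2}U\bigr)$ times bounded multiples of the $L^\infty$-weighted factors for $V$ and $\delta V$, using \eqref{eq:bstarbhat} once more; (ii) that $(U,V,h) \mapsto DN(U,V)$ is continuous from $E$ into $\mathcal{L}(E;G)$, again by multilinearity and the product estimates; and (iii) the first-order Taylor remainder estimate, $\norme{N(U+\delta U, V+\delta V) - N(U,V) - DN(U,V)\cdot(\delta U,\delta V)}_X = o(\norme{(\delta U,\delta V,\delta h)}_E)$, which is immediate because the remainder is a polynomial each of whose monomials contains at least two factors from the increment $(\delta U,\delta V)$, so its weighted norm is $O(\norme{(\delta U,\delta V,\delta h)}_E^2)$ by the very same estimates. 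Putting (i)–(iii) together gives Fréchet differentiability with continuous derivative, i.e. $\mathcal{N} \in C^1(E;G)$, and adding back the linear part yields $\mathcal{A} \in C^1(E;G)$.

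The main obstacle is purely bookkeeping: one must verify that in every monomial the product of the target weight $e^{s\widehat{\beta}}(\widehat{\gamma})^{-1/2}$ (or $e^{s\widehat{\beta}}(\widehat{\gamma})^{-3/2}$) with the reciprocals of the source weights appearing in $\norme{\cdot}_E$ is bounded on $Q_T$; the only non-trivial ingredient there is the inequality $\widehat{\beta}(t) < \tfrac{3}{2}\beta^\star(t)$ from \eqref{eq:bstarbhat}, which is precisely what lets one absorb one power of the "bad" weight $e^{s\widehat{\beta}}$ into three powers of $e^{(s/2)\beta^\star}$ coming from the three $C(\overline{Q_T})$-controlled factors in a cubic term (and $\widehat{\gamma} \le \gamma^\star$, $s^{-1}(\widehat{\gamma})^{-1} \le C$, $\gamma^\star \le C$ on compact time subintervals away from $T$, with the blow-up as $t \to T^-$ exactly compensated by the powers of $\widehat{\gamma}$ in the numerator of the source weights). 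No step requires anything beyond the one-dimensional Sobolev embedding $H^1(\Omega) \hookrightarrow C(\overline{\Omega})$, parabolic $L^2$-regularity (already used in the proof of \Cref{prop:NullControllabilityLinear}), and Hölder's inequality.
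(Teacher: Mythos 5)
Your proposal is correct and follows essentially the same route as the paper: isolate the (trivially smooth) linear part, then estimate each monomial of $N(U,V)$ by distributing the exponential weights between the two weighted families built into the $E$-norm and using $W(0,T)\hookrightarrow L^{\infty}(Q_T)$, multilinearity then yielding $C^1$. The only differences are cosmetic: the paper uses the exact factorization $e^{s\widehat{\beta}}(\widehat{\gamma})^{-1/2}=e^{s\widehat{\beta}-(s/2)\beta^{\star}}(\widehat{\gamma})^{-1/4}\cdot e^{(s/2)\beta^{\star}}(\widehat{\gamma})^{-1/4}$ (so it never needs \eqref{eq:bstarbhat} in this proof), whereas you invoke \eqref{eq:bstarbhat} and absorb leftover polynomial factors, and you additionally spell out the derivative, its continuity and the quadratic remainder bound, which the paper leaves implicit.
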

\begin{proof}
First, we remark that the linear terms in the definition of $\mathcal{A}$ are continuous then continuously differentiable because of the definition of the space $E$.

Let us show that the nonlinear term $N(U,V) \in C^1(E;G)$. For increasing positive constants $\overline{C} > 0$, we have that
\begin{align*}
\norme{N(U,V)}_{G} &\leq \overline{C}  \norme{e^{s \widehat{\beta}} (\widehat{\gamma})^{-1/2}  (UV^2 + 2 v_{\mp} UV + U_{\pm} V^2)}_{L^2(Q_T)}\\
 & \leq \overline{C} \left( \norme{ e^{s \widehat{\beta}} (\widehat{\gamma})^{-1/2} U V^2}_{L^2(Q_T)} + \norme{ e^{s \widehat{\beta}} (\widehat{\gamma})^{-1/2} U V}_{L^2(Q_T)}  + \norme{ e^{s \widehat{\beta}} (\widehat{\gamma})^{-1/2} V^2}_{L^2(Q_T)} \right).
 \end{align*}
\indent For the cubic term, we have
\begin{align*}
\norme{ e^{s \widehat{\beta}} (\widehat{\gamma})^{-1/2} U V^2}_{L^2(Q_T)} & \leq \overline{C} \norme{ e^{s \widehat{\beta} -(s/2)\beta^\star} (\widehat{\gamma})^{-1/4} U}_{L^{2}(Q_T)} \norme{ e^{(s/2)\beta^\star} (\widehat{\gamma})^{-1/4} V^2}_{L^{\infty}(Q_T)}\\
& \leq \overline{C} \norme{ e^{s \widehat{\beta} -(s/2)\beta^\star} (\widehat{\gamma})^{-1/4} U}_{L^{2}(Q_T)} \norme{ e^{(s/2)\beta^\star} (\widehat{\gamma})^{-1/4} V}_{L^{\infty}(Q_T)}^2\\
& \leq \overline{C} \norme{ e^{s \widehat{\beta} -(s/2)\beta^\star} (\widehat{\gamma})^{-1/4} U}_{L^{2}(Q_T)} \norme{ e^{(s/2)\beta^\star} (\widehat{\gamma})^{-1/4} V}_{W(0,T)}^2\\
& \leq \overline{C} \norme{(U,V,h)}_{E}^3.
\end{align*}
%
%Now we look to the spatial derivative \rouge{ Discuss this estimate:
%\begin{align*}
%& \norme{ e^{s \widehat{\beta}} (\widehat{\gamma})^{-1/2} \partial_{x} (U^2 V)}_{L^2(Q_T)}\\
%& \leq \norme{ e^{s \widehat{\beta}} (\widehat{\gamma})^{-1/2}(2 U \partial_{x}(U) V + U^2 \partial_{x} V}_{L^2(Q_T)}\\ 
%& \leq C \Bigg( \norme{ e^{s \widehat{\beta} -(s/2)\beta^*} (\widehat{\gamma})^{-1/4} U}_{L^{\infty}(Q_T)} \norme{ e^{(s/4)\beta^*} (\widehat{\gamma})^{-1/8} \partial_{x} U}_{L^2(Q_T)} \norme{ e^{(s/4)\beta^*} (\widehat{\gamma})^{-1/8} V}_{L^{\infty}(Q_T)} \\
%& \quad + \norme{ e^{s \widehat{\gamma} -(s/2)\beta^*} (\widehat{\gamma})^{-1/4} U^2}_{L^{\infty}(Q_T)} \norme{ e^{(s/2)\beta^*} (\widehat{\gamma})^{-1/4} \partial_{x} V}_{L^2(Q_T)}\Bigg)\\
%& \leq C \norme{(U,V,h)}_{E}^3.
%\end{align*}
%}
Note that we have used the embedding $W(0,T) \hookrightarrow L^{\infty}(0,T)$, see \eqref{eq:defW} for the defintion of $W(0,T)$.\\
\indent For the two quadratic terms, we easily have, using same type of arguments,
\begin{equation*}
 \norme{ e^{s \widehat{\beta}} (\widehat{\gamma})^{-1/2} U V}_{L^2(Q_T)}  + \norme{ e^{s \widehat{\beta}} (\widehat{\gamma})^{-1/2} V^2}_{L^2(Q_T)} \leq \overline{C} \norme{(U,V,h)}_{E}^2.
\end{equation*}
 This proves that $N(U,V) \in C^1(E;G)$ and concludes the proof.
\end{proof}
\begin{proof}[Proof of \Cref{th:mainresult1}]
An application of \Cref{th:inversionlocal} gives the existence of $\delta, \eta>0$, which a priori depend on ${d_v}$, such that if $$\norme{(u_0-u_{\pm}, v_0-v_{\pm})}_{L^2(\Omega) \times H^1(\Omega)} \leq (C_0^{-1} - \delta)^{-1}\eta,$$ there exists a control $h = h({d_v})$ such that the associated solution $(U,V)$ to \eqref{eq:nonlinear_diffusionUV} verifies $$(U,V)(T) =0\ \text{and}\ \norme{(U,V,h)}_{E} \leq \eta.$$
\indent To finish the proof of \Cref{th:mainresult2}, we must show that the constants $C_0$, $\eta$ and $\delta$ do not depend on ${d_v}$. This is actually a direct consequence from the fact that the constant $C_0$ in \eqref{eq:ConstantInjectivity}, which is actually the constant $C$ appearing in \Cref{prop:NullControllabilityLinear}, does not depend on ${d_v}$. So from \Cref{rmk:modulecontinuity} we can take $\delta < C_0^{-1}$ and $\eta$ can be chosen smaller than $\delta/\overline{C}$, $\sqrt{\delta/\overline{C}}$, where $\overline{C}$ is the maximal constant appearing in the proof of \Cref{prop:regA}.\\
\indent The expected bound \eqref{eq:boundedsigmauvh} follows from $\norme{(U,V,h)}_{E} \leq \eta$ and the definition of the space $E$. 
\end{proof}

\subsection{The shadow limit ${d_v} \rightarrow + \infty$ to reduce to the ODE-ODE system}

The goal of this section is to prove \Cref{th:mainresult1}. In order to do this, we will use \Cref{th:mainresult2} and the following result, which deals with the convergence of the system \eqref{eq:semilinear_diffusion} to the system \eqref{eq:nonlinear_shadow_SC} in the limit ${d_v} \rightarrow + \infty$.

\begin{prop}
\label{prop:ConvSol}
Let $(u_{{d_v}}, v_{{d_v}}, h_{{d_v}})\in H^1(0,T;L^2(\Omega)) \times L^{\infty}(Q_T) \times L^{2}(Q_T)$ be a solution of \eqref{eq:semilinear_diffusion} associated to $(u_0, v_0) \in L^2(\Omega)\times H^1(\Omega)$ such that
\begin{align}
&u_{{d_v}} \rightharpoonup u\ \text{in}\ H^1(0,T;L^2(\Omega))\ \text{as}\ {d_v} \rightarrow + \infty,\label{eq:convu}\\
&v_{{d_v}} \rightarrow v\ \text{in}\ L^{\infty}(Q_T)\ \text{as}\ {d_v} \rightarrow + \infty,\label{eq:convv}\\
&h_{{d_v}}\rightharpoonup h\ \text{in}\ L^{2}(Q_T)\ \text{as}\ {d_v} \rightarrow + \infty.\label{eq:convh}
\end{align}
Then, $(u,w):=(u,v)$ is the solution to \eqref{eq:nonlinear_shadow} associated to $h$ and $(u(0,\cdot),w(0))=(u_0, \int_{\Omega} v_0)$.
\end{prop}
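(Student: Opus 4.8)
The statement is a passage-to-the-limit result: given that the controlled trajectories $(u_{d_v},v_{d_v},h_{d_v})$ of the ODE-PDE system \eqref{eq:semilinear_diffusion} converge in the indicated topologies, one must identify the limit $(u,v,h)$ as a solution of the ODE-ODE shadow system \eqref{eq:nonlinear_shadow}, with the nonlocal term appearing as $\int_\Omega v_0$ in the initial condition for $w$. The plan is to pass to the limit in the weak (or mild) formulation of each equation of \eqref{eq:semilinear_diffusion} separately, and then to identify the limit of $v_{d_v}$ as being $x$-independent — which is exactly what forces the integral term in the shadow equation.

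\emph{Step 1: the first equation.} I would test the first equation of \eqref{eq:semilinear_diffusion} against a smooth function $\varphi(t,x)$ and pass to the limit. The linear terms $F(1-u_{d_v})$ and $h_{d_v}\mathbf 1_{\omega(t)}$ pass trivially using \eqref{eq:convu}, \eqref{eq:convh} and weak convergence. The delicate term is the nonlinearity $u_{d_v} v_{d_v}^2$: here I would use that \eqref{eq:convv} gives \emph{strong} convergence of $v_{d_v}$ in $L^\infty(Q_T)$ (hence of $v_{d_v}^2$), combined with the weak convergence of $u_{d_v}$, so the product $u_{d_v} v_{d_v}^2 \rightharpoonup u v^2$ in, say, $L^2(Q_T)$. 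Since the first equation is an ODE in $t$ pointwise in $x$ and $u_{d_v}$ is bounded in $H^1(0,T;L^2(\Omega))$, the limit $u$ satisfies $\partial_t u = -uv^2 + F(1-u) + h\mathbf 1_{\omega(t)}$, and the trace $u_{d_v}(0,\cdot)\rightharpoonup u(0,\cdot) = u_0$ survives by continuity of the trace map on $H^1(0,T;L^2(\Omega))$.

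\emph{Step 2: identifying $v$ as $x$-independent.} This is the heart of the argument and the step I expect to be the main obstacle. The second equation of \eqref{eq:semilinear_diffusion} reads $\partial_t v_{d_v} - d_v \partial_{xx} v_{d_v} = u_{d_v} v_{d_v}^2 - (F+k)v_{d_v}$ with homogeneous Neumann conditions. Multiplying by $d_v^{-1}$ and using that the right-hand side is bounded in $L^2(Q_T)$ (by the uniform bounds from \Cref{th:mainresult2}, i.e. \eqref{eq:boundedsigmauvh}) while $\partial_t v_{d_v}$ is also bounded there, one gets $\partial_{xx} v_{d_v} \to 0$ in $L^2(Q_T)$; together with the Neumann conditions and a Poincaré–Wirtinger inequality applied to $v_{d_v}(t,\cdot) - \fint_\Omega v_{d_v}(t,\cdot)$, this yields $v_{d_v} - \fint_\Omega v_{d_v} \to 0$ in $L^2(Q_T)$ (in fact in $L^2(0,T;H^1(\Omega))$). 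Combined with \eqref{eq:convv}, the limit $v$ is independent of $x$, i.e. $v(t,x) = w(t)$ for some $w \in L^\infty(0,T)$.

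\emph{Step 3: the shadow ODE for $w$.} Now integrate the second equation of \eqref{eq:semilinear_diffusion} over $x \in \Omega$; the diffusion term vanishes by the Neumann boundary conditions, giving $\tfrac{d}{dt}\int_\Omega v_{d_v}(t,x)\dx = \int_\Omega u_{d_v}(t,x) v_{d_v}(t,x)^2 \dx - (F+k)\int_\Omega v_{d_v}(t,x)\dx$. Passing to the limit: the left side converges (in $\mathcal D'(0,T)$, say) to $w'(t)$ since $\int_\Omega v_{d_v} \to w$ strongly; on the right, using that $v_{d_v}^2 \to w^2$ strongly in $L^\infty$ and $u_{d_v}\rightharpoonup u$ weakly in $L^2$, we get $\int_\Omega u_{d_v} v_{d_v}^2 \dx \to w(t)^2 \int_\Omega u(t,x)\dx$, and $(F+k)\int_\Omega v_{d_v} \to (F+k) w$. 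Hence $w' = \big(\int_\Omega u(t,x)\dx\big) w^2 - (F+k) w$, which is exactly the second equation of \eqref{eq:nonlinear_shadow}. For the initial condition, integrating $v_{d_v}(0,\cdot) = v_0$ over $\Omega$ and using that $t \mapsto \int_\Omega v_{d_v}(t,\cdot)$ converges in $C([0,T])$ (the sequence is bounded in $H^1(0,T)$, hence relatively compact in $C([0,T])$), we obtain $w(0) = \int_\Omega v_0$. Finally, uniqueness of solutions to \eqref{eq:nonlinear_shadow} in the stated class (noted in the remark following \Cref{th:mainresult1}, via Gronwall) ensures $(u,w)$ is \emph{the} solution, completing the proof. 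The only real subtlety is making sure that the compactness/strong-convergence ingredients of Steps 2–3 are genuinely available from the uniform-in-$d_v$ bounds of \Cref{th:mainresult2}; one may need a standard Aubin–Lions type argument to upgrade the weak convergence of $\int_\Omega v_{d_v}$ to convergence in $C([0,T])$.
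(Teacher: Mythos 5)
Your proof is essentially correct in the setting where the paper uses the proposition, but it follows a genuinely different route. The paper introduces the auxiliary pair $(\tilde u,\tilde v)$ solving the limit system \eqref{eq:solutionIntTilde} driven by the limit data $(u,v,h)$, shows $u_{d_v}-\tilde u\rightharpoonup 0$ by an argument on $\partial_t(u_{d_v}-\tilde u)$, and identifies $v=\tilde v$ through the Duhamel representation of $v_{d_v}$ together with the spectral-gap decay of the Neumann heat semigroup (\Cref{lem:semi_properties}); this uses only the boundedness implied by the assumed convergences and gives quantitative rates of order $d_v^{-1/2}$ and convergence in $C([\delta,T];L^2(\Omega))$. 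You instead pass to the limit in the weak formulation of each equation, prove that the limit $v$ is $x$-independent, and recover the shadow ODE by averaging in $x$; this is more elementary, needs no semigroup machinery, and makes transparent why the nonlocal term and the initial datum $\int_\Omega v_0$ appear.

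One caveat concerns your Step 2, the only place that goes beyond the hypotheses of the proposition as stated: you bound $d_v\partial_{xx}v_{d_v}$ in $L^2(Q_T)$ using a uniform bound on $\partial_t v_{d_v}$, i.e. \eqref{eq:boundedsigmauvh}, whereas the proposition only assumes the convergences \eqref{eq:convu}--\eqref{eq:convh}, which do not control $\partial_t v_{d_v}$ uniformly in $d_v$. This is harmless where the proposition is applied, since the controlled trajectories of \Cref{th:mainresult2} do satisfy \eqref{eq:boundedsigmauvh}, but if you want a proof within the stated hypotheses, replace that step by the basic energy estimate: multiplying the second equation of \eqref{eq:semilinear_diffusion} by $v_{d_v}$ and integrating gives $d_v\iint_{Q_T}|\partial_x v_{d_v}|^2\dx\dt\le \tfrac12\norme{v_0}_{L^2(\Omega)}^2+\norme{u_{d_v}v_{d_v}^2-(F+k)v_{d_v}}_{L^2(Q_T)}\norme{v_{d_v}}_{L^2(Q_T)}\le C$, so $\partial_x v_{d_v}\to 0$ in $L^2(Q_T)$ and the limit is $x$-independent by Poincar\'e--Wirtinger. (In your version, note also that passing from $\partial_{xx}v_{d_v}\to 0$ to $\partial_x v_{d_v}\to 0$ needs one extra integration by parts using the Neumann condition.) Your Step 3, including $w(0)=\int_\Omega v_0$ via the equicontinuity of $t\mapsto\int_\Omega v_{d_v}(t,\cdot)$, uses only the assumed convergences and is fine.
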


\begin{proof}
For the purpose of the proof, let us denote
$$ f(u,v) = -uv^2 + F(1-u),\ g(u,v) = uv^2 - (F+k) v.$$  

Let $(\tilde{u}, \tilde{v})$ be the solution to 
\begin{equation}\label{eq:solutionIntTilde}
\begin{cases}
\partial_t \tilde{u}=f( u, v) + h \mathbf{1}_{\omega(t)} & \text{in } Q_T, \\
\D  \tilde {v}^\prime=\int_{\Omega} g( u, v)\dx &\text{in }(0,T), \\
\D \tilde{u}(0,x)=u_0(x)\ \text{in }\Omega, \quad  \tilde{v}(0)=\int_{\Omega} v_0,
\end{cases}
\end{equation}
where $(u,v,h)$ are the limits coming from \eqref{eq:convu}, \eqref{eq:convv}, \eqref{eq:convh}.

Taking the difference between the first equations of systems \eqref{eq:semilinear_diffusion} and \eqref{eq:solutionIntTilde}, we easily obtain that 
\begin{align*}
\partial_t u_{d_v} - \partial_t u &=f(u_{{d_v}}, v_{{d_v}}) - f(u,v) + (h_{d_v} - h) 1_{\omega(t)}\\
 &= - v_{d_v}^2u_{d_v} + F(1-u_{d_v})+ v^2u-F (1-u)+ (h_{d_v} - h) 1_{\omega(t)}\\
& = - v^2(u_{d_v}-u)- u_{d_v}(v_{d_v}^2-v^2)-F(u_{{d_v}}-u)+ (h_{d_v} - h) 1_{\omega(t)}.
\end{align*}
By using \eqref{eq:convu}, \eqref{eq:convv} and \eqref{eq:convh}, it is straightforward to see that 
\begin{equation}
\label{eq:convpartialtu}
 \partial_t u_{d_v}- \partial_t \tilde{u}  \rightharpoonup 0 \quad\text{in } L^2(Q_T)\ \text{as}\ {d_v} \rightarrow + \infty.
\end{equation}
Then, by writing for every $\chi \in L^2(Q_T)$, using Fubini's theorem and \eqref{eq:convpartialtu}, we have
\begin{align*} 
\int_{Q_T} (u_{d_v}(t,x) - \tilde{u}(t,x)) \chi(t,x) \d{t} \d{x} &= \int_0^T \int_0^1 \left(\int_0^t (\partial_t u_{d_v}- \partial_t \tilde{u})(s,x) \d{s} \right) \chi(t,x)  \d{x} \d{t}\\
& = \int_0^T \int_0^1 (\partial_t u_{d_v}- \partial_t \tilde{u})(s,x)\left( \int_s^T \chi(t,x) \d{t} \right) \d{s}\d{x}\\
\int_{Q_T}(u_{d_v}(t,x) - \tilde{u}(t,x))  \chi(t,x) \d{t} \d{x}& \rightharpoonup 0 \ \text{as}\ {d_v} \rightarrow + \infty,
\end{align*}
so
\begin{equation}\label{eq:conv_u_tilde}
u_{d_v}-\tilde{u}  \rightharpoonup 0 \quad\text{in } H^1(0,T;L^2(\Omega))\ \text{as}\ {d_v} \rightarrow + \infty.
\end{equation}
Therefore, by uniqueness, recalling \eqref{eq:convu} and \eqref{eq:conv_u_tilde}, we have 
\begin{equation}
\label{eq:equalityuutilde}
u = \tilde{u}.
\end{equation}

For the second equation, we begin by writing the solution of \eqref{eq:solutionIntTilde} and \eqref{eq:semilinear_diffusion} as follows
\begin{align}
\label{eq:explicitv}&\tilde v(t)=\int_{\Omega}v_0+\int_0^t\int_\Omega g( u(x,s), v(s))\dx \d{s}, \\
\label{eq:explicitvsigma}&v_{d_v}(t)=e^{t{d_v}\partial_{xx}}v_0+\int_0^{t}e^{(t-s){d_v}\partial_{xx}}g(u_{d_v}(s),v_{d_v}(s))\d{s}.
\end{align}
Taking the difference between \eqref{eq:explicitvsigma} and \eqref{eq:explicitv} and computing the $L^2$-norm, we get
\begin{align*}\notag 
\|v_{d_v}(t)-\tilde v(t)\|_{L^2(\Omega)}\leq & \norme{e^{t{d_v}\partial_{xx}}\left(v_0-\int_{\Omega}v_0\dx\right)}_{L^2(\Omega)} \\
&+\norme{\int_{0}^{t}e^{(t-s){d_v}\partial_{xx}}\left(g(u_{d_v}(s),v_{d_v}(s))-\int_\Omega g( u(s), v(s))\dx\right)\d{s}}_{L^2(\Omega)},
\end{align*}
where $\{e^{t{d_v}\partial_{xx}}\}_{t\geq 0}$ stands for the heat semigroup associated to the diffusion parameter $d_v >0 $ with homogeneous Neumann boundary conditions on the interval $(0,1)$.
Employing property $a.$ of \Cref{lem:semi_properties} with $K=\int_{\Omega}g(u_{d_v}(s),v_{d_v}(s)) - g(u(s),v(s))\dx$, we can easily deduce that
\begin{align}\notag 
\|v_{d_v}(t)-\tilde v(t)\|_{L^2(\Omega)}\leq & \norme{e^{t{d_v}\partial_{xx}}\left(v_0-\int_{\Omega}v_0\dx\right)}_{L^2(\Omega)} \\ \notag
&+\norme{\int_{0}^{t}e^{(t-s){d_v}\partial_{xx}}\left(g(u_{d_v}(s),v_{d_v}(s))-\int_\Omega g(u_{d_v}(s), v_{d_v}(s))\dx\right)\d{s} }_{L^2(\Omega)} \\ \label{eq:est_diff_v_vtilde}
&+\left| \int_0^{t}\int_{\Omega}g(u_{d_v}(s),v_{d_v}(s))-g( u(s), v(s))\dx\d{s} \right|.
\end{align}
Let us treat the three terms in the right hand side of \eqref{eq:est_diff_v_vtilde} separately.\\
\indent For the first one, we can use property $b.$ in \Cref{lem:semi_properties} with $z_0=v_0$ to obtain 
\begin{align}\label{eq:first_term}
t^{1/2}\norme{e^{t{d_v}\partial_{xx}}\left(v_0-\int_{\Omega}v_0\dx\right)}_{L^2(\Omega)}\leq C t^{1/2} e^{-\lambda_1 d_v t} &\leq C (d_v)^{-1/2} (d_v t)^{1/2} e^{-\lambda_1 d_v t}\notag\\
 & \leq C{d_v}^{-1/2} \rightarrow 0,\ \text{as}\ {d_v}\to +\infty.
\end{align}

For the second term, noting that it has zero-mean and that $g(u_{d_v},v_{d_v})-\int_{\Omega}g(u_{d_v},v_{d_v})\dx$ is bounded in $L^2$, using again $b.$ in \Cref{lem:semi_properties}, \eqref{eq:estisemigroupNeumann}, we get
\begin{align}\notag 
& \norme{\int_0^t e^{(t-s) {d_v} \partial_{xx}} \left(g(u_{{d_v}}(s), v_{{d_v}}(s)) -  \int_{\Omega} g(u_{{d_v}}(s), v_{{d_v}}(s)) \d{s}\right)}_{L^{2}(\Omega)} \\
& \leq C \int_0^t e^{-\lambda_1 {d_v} (t-s)} ds \leq C \frac{1}{\lambda_1 {d_v}} (1 - e^{-\lambda_1 {d_v} t}) \leq \frac{C}{\lambda_1 {d_v}} \rightarrow 0\ \text{ as }\ {d_v} \rightarrow + \infty \label{eq:est_second_term}
\end{align}
\indent For the last one, we write
\begin{equation*}
\left| \int_0^{t}\int_{\Omega}g(u_{d_v}(s),v_{d_v}(s))-g( u(s), v(s))\right| = \left|\int_{0}^{t}\int_{\Omega}v_{d_v}^2u_{d_v}-(F+k)v_{d_v}-v^2u+(F+k)v\right|
\end{equation*}
Adding and subtracting $v^2u_{d_v}$ and rearranging terms, we get
\begin{align}
&\left| \int_0^{t}\int_{\Omega}g(u_{d_v}(s),v_{d_v}(s))-g( u(s), v(s))\right|\notag\\
&=\left|\int_{0}^{t}\int_{\Omega}v^2(u_{d_v}-u)+\int_{0}^{t}\int_\Omega u_{d_v}(v_{d_v}^2-v^2)+(F+k)\int_{0}^{t}\int_{\Omega} (v-v_{d_v})\right| \label{eq:lasttermthree}
\end{align}
We use triangle inequality and see the behavior of each term in the right hand side of \eqref{eq:lasttermthree}. For the first one, by using \eqref{eq:convu} and \eqref{eq:convv}, we have
\begin{equation}
\lim_{{d_v}\to \infty}\left|\int_{0}^{t}\int_{\Omega}v^2(u_{d_v}-u)\right|=\left|\lim_{{d_v}\to \infty}\int_{0}^{t}\int_{\Omega}v^2(u_{d_v}-u)\right|=0. \label{eq:lastermuno}
\end{equation}
For the second one, using again \eqref{eq:convu} and \eqref{eq:convv} we have
\begin{align}
\left |\int_{0}^t\int_{\Omega}u_{d_v}(v_{d_v}^2-v^2)\right|\leq \|u_{d_v}\|_{L^2(Q_T)}\|v_{d_v}^2-v^2\|_{L^2(Q_T)}  \leq C\|v_{d_v}-v\|_{L^\infty(Q_T)} \rightarrow 0.\label{eq:lasttermbis}
\end{align}
For the last one, from \eqref{eq:convu}, we have 
\begin{equation}
\label{eq:lastermthreeter}
\left|(F+k)\int_{0}^{t}\int_{\Omega} (v-v_{d_v})\right| \rightarrow 0\ \text{as}\ {d_v} \rightarrow + \infty.
\end{equation}
From \eqref{eq:lasttermthree}, \eqref{eq:lastermuno}, \eqref{eq:lasttermbis}, \eqref{eq:lastermthreeter}, we deduce that
\begin{equation}
\left| \int_0^{t}\int_{\Omega}g(u_{d_v}(s),v_{d_v}(s))-g( u(s), v(s))\right| \rightarrow 0\ \text{as}\ {d_v} \rightarrow + \infty.
\label{eq:est_three_term}
\end{equation}
\indent So from \eqref{eq:first_term}, \eqref{eq:est_second_term} and \eqref{eq:est_three_term}, we obtain that
\begin{equation}\label{eq:conv_delta}
\|v_{d_v}-\tilde v\|_{C([\delta,T];L^2(\Omega))} \to 0 \quad \text{ as } {d_v}\to+\infty,
\end{equation}
for any $\delta>0$.
So, by uniqueness, recalling \eqref{eq:convv} and \eqref{eq:conv_delta}, we have 
\begin{equation}
\label{eq:equalityvvtilde}
v = \tilde{v}.
\end{equation}
Hence, from \eqref{eq:solutionIntTilde}, \eqref{eq:equalityvvtilde} and \eqref{eq:equalityuutilde}, we obtain the conclusion of the proof.
\end{proof}
Now, we prove \Cref{th:mainresult1}.
\begin{proof}[Proof of \Cref{th:mainresult1}]
From \Cref{th:mainresult2}, we deduce that one can find $h_{{d_v}}$ such that \eqref{eq:boundedsigmauvh} and \eqref{eq:finaluv} hold. Using that $H^1(0,1) \hookrightarrow L^\infty(0,1) \hookrightarrow L^2(0,1)$, the first embedding being compact and the second one continuous, we can apply Aubin-Lions Theorem (see \cite[Section 8, Corollary 4]{Sim87}) to deduce that $W(0,T)\hookrightarrow L^\infty (Q_T)$ compactly. So, at least for subsequence, we have \eqref{eq:convu}, \eqref{eq:convv} and \eqref{eq:convh}. Therefore, the conclusion of \Cref{th:mainresult1} follows from \Cref{prop:ConvSol}. Note that \eqref{eq:finaluw} is guaranteed by \eqref{eq:finaluv} and the continuous embedding $H^1(0,T;L^2(\Omega)) \hookrightarrow C([0,T];L^2(\Omega))$.
\end{proof}

\section{Comments and open problems}

\subsection{Spatial dimension $N>1$ and Dirichlet boundary conditions}

In this paper, we focus on the spatial dimension $N=1$. This is due to the Carleman estimate for heat equation with homogeneous Neumann boundary conditions from \Cref{lem:CarlPDE}. Indeed, this inequality is only proved in $1$-D, see \cite[Appendix A]{KSD18}. This restriction comes from the properties \eqref{eq:deriv_w_0}, \eqref{eq:deriv_w_1} of the weight $\eta$. Constructing such a weight in the multidimensional case is actually an interesting open problem.\\
\indent Note that this type of Carleman estimate from \Cref{lem:CarlPDE} is valid in the multidimensional case for homogeneous Dirichlet boundary conditions, see \cite[Lemma 4.4]{CSRZ14}. Therefore, by a small adaptation of the proof of \Cref{th:mainresult2}, we can also obtain \Cref{th:mainresult2} for homogeneous Dirichlet boundary conditions on the component $v$. Unfortunately, by sending $d_v \rightarrow + \infty$ in the system \eqref{eq:semilinear_diffusion}, $v_{d_v} \rightarrow 0$, so we cannot hope to extend \Cref{th:mainresult1} by this methodology.

\subsection{Other type of nonlinearities}

In this paper, we deal with nonlinear integro-differential equations with cubic nonlinearity, coming from the Gray-Scott model. Actually, we crucially use the fact that the cubic term $uv^2$ is linear in $u$. Indeed, by looking at the definition the space $E$ in \eqref{eq:defE}, we observe that the linear controlled trajectory $(U,V)$, forgetting the weights, satisfy
$$ (U,V) \in H^1(0,T;L^2(\Omega)) \times W(0,T).$$
The extra regularity on the component $V$ is the crucial point to treat the cubic term $UV^2$, see the proof of \Cref{prop:regA}. Moreover, we do not know how to obtain extra regularity on the component $U$ by looking carefully to the proof of \Cref{prop:NullControllabilityLinear}. This is due to the fact that ODEs do not imply automatically spatial regularity, contrary to parabolic PDEs. To sum up, obtaining \Cref{th:mainresult1} and \Cref{th:mainresult2} replacing the Gray-Scott nonlinearity by a nonlinearity superlinear in the variable $u$ is an interesting open problem.

\subsection{Uniform controllability with respect to $d_u\to 0$}

In this paper, we have studied control properties for the ODE-PDE system \eqref{eq:semilinear_diffusion} and obtained a uniform controllability result with respect to the diffusion parameter ${d_v}\to+\infty$, leading us to the ODE-ODE system \eqref{eq:nonlinear_shadow}. An analogous result, where the starting point is \eqref{eq:classical_GSS} (with control on the first equation) and leading us to \eqref{eq:semilinear_diffusion} is in fact an interesting and open question.

On the very recent article \cite{CLW20}, the authors study the limit behavior as $\epsilon \rightarrow 0$ of a chemotaxis problem of the form
\begin{equation}
\begin{cases}
\partial_t u-\Delta u=-\textnormal{div}(u\nabla f(v)) & \text{in } (0,T)\times \Omega, \\
\partial_t v-\epsilon \Delta v=g(u,v) & \text{in } (0,T)\times \Omega,
\end{cases}
\end{equation}
complemented with homogeneous Neumann boundary conditions and where $f\in C^2(\mathbb R)$ and $g\in C^2(\mathbb R^2;\mathbb R)$ are suitable nonlinear functions. Under various assumptions on the initial data and the growth of the nonlinearities, they are able to establish a convergence result  as $\epsilon \to 0$ (see \cite[Theorem 1.4]{CLW20}) towards the system 
\begin{equation}
\begin{cases}
\partial_t u-\Delta u=-\textnormal{div}(u\nabla f(v)) & \text{in } (0,T)\times \Omega, \\
\partial_t \tilde v =g(u,\tilde v) & \text{in } (0,T)\times \Omega.
\end{cases}
\end{equation}

At first glance, it is reasonable to expect that such a result can be adapted to our case to deduce conditions for which the system \eqref{eq:classical_GSS} converges to the uncontrolled version (i.e. $h\equiv0$) of \eqref{eq:semilinear_diffusion}. Nevertheless, designing a sequence of controls $\{h_{\epsilon}\}_{\epsilon}$ for system \eqref{eq:classical_GSS} which is uniformly bounded with respect to $\epsilon$ seems to be a difficult question. 

In fact, from classical papers \cite{CG05,GL07} studying the uniform controllability of vanishing viscosity of parabolic-hyperbolic equation, we can deduce that this is not the case when the support of the control $\omega$ is fixed. This opens up the possibility of obtaining uniform bounds by using moving controls. However, the Carleman estimates developed in \cite{CSRZ14} are not a priori well suited to treat this case. A first approach might be to study a simple 1-D case for a single heat equation using the tools in  \cite{MRR13} and see if a moving control with uniform bounds can be built. The answer is far from obvious.

\appendix

\section{Uniform parabolic Carleman estimate}
\label{sec:uniformcarlHeat}
In this part, we prove \Cref{lem:CarlPDE}. The proof follows the arguments presented in \cite[Appendix A]{KSD18} with some remarks coming grom \cite{chaves_uniform}. For completeness, we give a sketch of the proof. 

We start by giving some useful properties on the weight function $\alpha$ and its derivatives. By simple computations using \eqref{eq:defweights}, we have
\begin{equation} \label{eq:prop_weights}
\begin{split}
\alpha_x&=-\lambda\xi\eta_x, \\ 
\alpha_{xx}&=\lambda^2\xi(-\eta_x^2-\lambda^{-1}\eta_{xx}), \\
|\alpha_{xx}|&\leq C\lambda^2\xi, \\
|\alpha_{xxx}|&\leq C\lambda^3\xi, \\ 
|\alpha_{xxxx}|&\leq C\lambda^4\xi, \\ 
|\alpha_t|&\leq C(T+e^{2\lambda\|\eta\|_\infty})\lambda\xi^2, \\
|\alpha_{xt}|&\leq C(T+1)\lambda^2\xi^2, \\
|\alpha_{tt}|&\leq C(T+T^2+e^{2\lambda\|\eta\|_\infty})\lambda^2\xi^3.
\end{split}
\end{equation}

As usual, we set $w=e^{-s\alpha}\psi$ and compute
\begin{equation*}
w_{x}=-s\alpha_x w+e^{-s\alpha}\psi_x
\end{equation*}
Using the boundary conditions of $\psi$, we deduce $w_x=-s\alpha_xw$ on $(0,T)\times\{0,1\}$. We introduce the parabolic operator $P=\partial_t+\frac{1}{\epsilon}\partial_{xx}$. Then, we have
\begin{equation}\label{eq:iden_carleman}
e^{-s\alpha}P(e^{s\alpha}w)=P_e^{\epsilon}w+P_{k}^{\epsilon}w
\end{equation}
where
\begin{align}\label{eq:p_e}
P_e^{\epsilon}w&=\frac{1}{\epsilon} w_{xx}+s\alpha_t w+\frac{1}{\epsilon}s^2\alpha_x^2 w, \\ \label{eq:p_k}
P_k^{\epsilon}w&=w_t+\frac{2}{\epsilon}s\alpha_xw_x+\frac{1}{\epsilon}s\alpha_{xx}w. 
\end{align}

We take the $L^2$-norm in both sides of \eqref{eq:iden_carleman}, thus obtaining 
\begin{equation}\label{eq:identity_carleman}
\|e^{-s\alpha} P(e^{s\alpha}w)\|_{L^2(Q_T)}^2=\|P_{e}^\epsilon w\|^2_{L^2(Q_T)}+\|P_k^\epsilon w\|^2_{L^2(Q_T)}+2\left(P_e^\epsilon w,P_k^\epsilon w\right)_{L^2(Q_T)}
\end{equation}

A very long, but straightforward computation gives that
\begin{align*}
2\left(P_e^\epsilon w,P_k^\epsilon w\right)_{L^2(Q_T)}= BT+DT_1+DT_2,
\end{align*}
where
\begin{align*}
BT:=& \frac{1}{\epsilon}\left.\int_{0}^{T}s\alpha_{xt}w^2\dt\right|_{0}^{1}+\frac{2}{\epsilon^2}\left.\int_{0}^{T}s\alpha_{x}w_x^2\dt\right|_{0}^{1}+\frac{2}{\epsilon^2}\left.\int_{0}^{T}s\alpha_{xx}w_x w\dt\right|_{0}^{1} \\
&-\frac{1}{\epsilon^2}\left.\int_{0}^{T}s\alpha_{xxx}w^2\dt\right|_{0}^{1}+\frac{2}{\epsilon}\left.\int_{0}^{T}s^2\alpha_t\alpha_{x}w^2\dt\right|_{0}^{1}+\frac{2}{\epsilon^2}\left.\int_{0}^{T}s^3\alpha_{x}^3 w^2\dt\right|_{0}^{1}, \\
DT_1:=&-\frac{4}{\epsilon^2}\iint_{Q_T} s\alpha_{xx}w_x^2\dx\dt, \\
DT_2:=& -\frac{4}{\epsilon}\iint_{Q_T}s^2\alpha_{tx}\alpha_x w^2\dx\dt+\frac{1}{\epsilon^2}\iint_{Q_T}s\alpha_{xxxx}w^2\dx\dt-\iint_{Q_T}s\alpha_{tt}w^2\dx\dt\\
&-\frac{4}{\epsilon^2}\iint_{Q_T} s^3\alpha_x^2\alpha_{xx}w^2\dx\dt.
\end{align*}

Using that $w_{x}=-s\alpha_x w$ in $(0,T)\times\{0,1\}$, we can obtain
\begin{align*}
BT=&\frac{4}{\epsilon^2}\left.\int_{0}^T s^3\alpha_x^3 w^2 \dt\right|_{0}^{1}+\frac{2}{\epsilon}\left.\int_{0}^{T}s^2\alpha_t\alpha_{x}w^2\dt\right|_{0}^{1}-\frac{2}{\epsilon^2}\left.\int_{0}^{T}s^2\alpha_x\alpha_{xx}w^2\dt\right|_{0}^{1} \\
&+\frac{1}{\epsilon}\left.\int_{0}^{T}s\alpha_{xt}w^2\dt\right|_{0}^{1}-\frac{1}{\epsilon^2}\left.\int_{0}^{T}s\alpha_{xxx}w^2\dt\right|_{0}^{1}.
\end{align*}
Moreover, using properties \eqref{eq:prop_weights} together with \eqref{eq:deriv_w_0}--\eqref{eq:deriv_w_1}, it is not difficult to see that
\begin{align*}
BT\geq& \frac{4C}{\epsilon^2}\int_0^Ts^3\lambda^3\xi^3w^2\dt\Big|_{x=1}+\frac{4C}{\epsilon^2}\int_0^Ts^3\lambda^3\xi^3w^2\dt\Big|_{x=0} \\
&-\frac{2C}{\epsilon^2}\int_{0}^{T}s^2\lambda^2\xi^3(T+e^{2\lambda\|\eta\|_\infty})w^2\dt\Big|_{x=1}-\frac{2C}{\epsilon^2}\int_{0}^{T}s^2\lambda^2\xi^3(T+e^{2\lambda\|\eta\|_\infty})w^2\dt\Big|_{x=0} \\
&-\frac{2}{\epsilon^2}\int_{0}^{T}s^2\lambda^3\xi^2w^2\dt\Big|_{x=1}-\frac{2}{\epsilon^2}\int_{0}^{T}s^2\lambda^3\xi^2w^2\dt\Big|_{x=0} \\
&-\frac{C}{\epsilon^2}\int_{0}^{T}s(T+1)\lambda^2\xi^2w^2\dt\Big|_{x=1}-\frac{C}{\epsilon^2}\int_{0}^{T}s(T+1)\lambda^2\xi^2w^2\dt\Big|_{x=0} \\
&-\frac{C}{\epsilon^2}\int_{0}^{T}s\lambda^3\xi w^2\dt\Big|_{x=1}-\frac{C}{\epsilon^2}\int_{0}^{T}s\lambda^3\xi w^2\dt\Big|_{x=0}
\end{align*}
where we have used that $0<\epsilon\leq 1$ to adjust the powers of $\epsilon$. Finally, taking $\lambda\geq C$ and $s\geq C(1+T+e^{2\lambda\|\eta\|_\infty})$, we get
\begin{equation}\label{eq:estimate_BT}
BT\geq \frac{C}{\epsilon^2}\int_0^Ts^3\lambda^3\xi^3w^2\dt\Big|_{x=1}+\frac{C}{\epsilon^2}\int_0^Ts^3\lambda^3\xi^3w^2\dt\Big|_{x=0}.
\end{equation}

Let us focus now on the distributed terms. Using \eqref{eq:prop_weights}, we can rewrite
\begin{equation*}
DT_1=\frac{4}{\epsilon^2}\iint_{Q_T}s\lambda^2\xi\eta_x^2w_x^2\dx\dt+\frac{4}{\epsilon^2}\iint_{Q_T}s\lambda\xi\eta_{xx}w_x^2\dx\dt
\end{equation*}
and
\begin{align*}
DT_2=&-\frac{4}{\epsilon}\iint_{Q_T}s^2\alpha_{xt}\alpha_x w^2\dx\dt+\frac{1}{\epsilon^2}\iint_{Q_T}s\alpha_{xxxx} w^2\dx\dt \\
&-\iint_{Q_T}s\alpha_{tt}w^2\dx\dt+\frac{4}{\epsilon^2}\iint_{Q_T}s^3\lambda^3\xi^3\eta_x^2\eta_{xx}w^2\dx\dt+\frac{4}{\epsilon^2}\iint_{Q_T}s^3\lambda^4\xi^3\eta_x^4w^2\dx\dt.
\end{align*}
Using estimates \eqref{eq:prop_weights}, we can bound by below as follows
\begin{equation}\label{eq:est_DT1}
DT_1\geq \frac{4}{\epsilon^2}\iint_{Q_T}s\lambda^2\xi\eta_x^2w_x^2\dx\dt-\frac{C}{\epsilon^2}\iint_{Q_T}s\lambda\xi w_x^2\dx\dt.
\end{equation}
and
\begin{align}\notag
DT_2\geq & \frac{4}{\epsilon^2}\iint_{Q_T}s^3\lambda^4\xi^3\eta_x^4w^2\dx\dt-\frac{C(T+1)}{\epsilon}\iint_{Q_T}s^2\lambda^3\xi^3 w^2\dx\dt -\frac{C}{\epsilon^2}\iint_{Q_T}s\lambda^4\xi w^2\dx\dt \\ \label{eq:est_DT2}
&-C\left(T+T^2+e^{2\lambda\|\eta\|_\infty}\right)\iint_{Q_T}s\lambda^2\xi^3w^2\dx\dt-\frac{C}{\epsilon^2}\iint_{Q_T}s^3\lambda^3\xi^3w^2\dx\dt.
\end{align}
Collecting estimates \eqref{eq:estimate_BT}--\eqref{eq:est_DT2} yield
\begin{align*}
2\left(P_e^\epsilon w,P_k^\epsilon w\right)_{L^2(Q_T)} \geq &  \frac{C}{\epsilon^2}\int_0^Ts^3\lambda^3\xi^3w^2\dt\Big|_{x=1}+\frac{C}{\epsilon^2}\int_0^Ts^3\lambda^3\xi^3w^2\dt\Big|_{x=0} \\
&+\frac{4}{\epsilon^2}\iint_{Q_T}s\lambda^2\xi\eta_x^2w_x^2\dx\dt+\frac{4}{\epsilon^2}\iint_{Q_T}s^3\lambda^4\xi^3\eta_x^4w^2\dx\dt-\frac{C}{\epsilon^2}\iint_{Q_T}s\lambda\xi w_x^2\dx\dt \\
&-\frac{C(T+1)}{\epsilon}\iint_{Q_T}s^2\lambda^3\xi^3 w^2\dx\dt -\frac{C}{\epsilon^2}\iint_{Q_T}s\lambda^4\xi w^2\dx\dt \\
&-C\left(T+T^2+e^{2\lambda\|\eta\|_\infty}\right)\iint_{Q_T}s\lambda^2\xi^3w^2\dx\dt-\frac{C}{\epsilon^2}\iint_{Q_T}s^3\lambda^3\xi^3w^2\dx\dt.
\end{align*}
Using property \eqref{eq:deriv_eta} and taking $\lambda\geq C$ and $s\geq C(T+T^2)$, we deduce
\begin{equation}\label{eq:est_prod}
\begin{split}
2\left(P_e^\epsilon w,P_k^\epsilon w\right)_{L^2(Q_T)}&+\frac{1}{\epsilon^2}\iint_{\omega_{0}(t)\times(0,T)}s^3\lambda^4\xi^3 w^2 \dx\dt+\frac{1}{\epsilon^2}\iint_{\omega_{0}(t)\times(0,T)}s\lambda^2\xi w_x^2 \dx\dt \\
\geq &  \frac{C}{\epsilon^2}\int_0^Ts^3\lambda^3\xi^3w^2\dt\Big|_{x=1}+\frac{C}{\epsilon^2}\int_0^Ts^3\lambda^3\xi^3w^2\dt\Big|_{x=0} \\
&+\frac{C}{\epsilon^2}\iint_{Q_T}s\lambda^2\xi w_x^2\dx\dt+\frac{C}{\epsilon^2}\iint_{Q_T}s^3\lambda^4\xi^3 w^2\dx\dt.
\end{split}
\end{equation}

Combining \eqref{eq:est_prod} with \eqref{eq:identity_carleman}, we get
\begin{equation}\label{eq:car_ineq_0}
\begin{split}
C\|e^{-s\alpha} &P(e^{s\alpha}w)\|_{L^2(Q_T)}^2+\frac{C}{\epsilon^2}\iint_{\omega_{0}(t)\times(0,T)}s^3\lambda^4\xi^3 w^2 \dx\dt+\frac{C}{\epsilon^2}\iint_{\omega_{0}(t)\times(0,T)}s\lambda^2\xi w_x^2 \dx\dt \\
\geq& \ \|P_{e}^\epsilon w\|^2_{L^2(Q_T)}+\|P_k^\epsilon w\|^2_{L^2(Q_T)}+\frac{1}{\epsilon^2}\int_0^Ts^3\lambda^3\xi^3w^2\dt\Big|_{x=1}+\frac{1}{\epsilon^2}\int_0^Ts^3\lambda^3\xi^3w^2\dt\Big|_{x=0} \\
&+\frac{1}{\epsilon^2}\iint_{Q_T}s\lambda^2\xi w_x^2\dx\dt+\frac{1}{\epsilon^2}\iint_{Q_T}s^3\lambda^4\xi^3 w^2\dx\dt.
\end{split}
\end{equation}

We will add terms corresponding to $w_{xx}$ and $w_t$ to the right-hand side of \eqref{eq:car_ineq_0}. For this, we multiply \eqref{eq:p_e} by $s^{-1/2}\xi^{-1/2}$ and take the $L^2$-norm, that is,
\begin{align}\notag 
\frac{1}{\epsilon^2}s^{-1}\|\xi^{-1/2}w_{xx}\|^2_{L^2(Q_T)}&=s^{-1}\|\xi^{-1/2}(P_e^{\epsilon}w-s\alpha_tw-\frac{1}{\epsilon}s^2\alpha_x^2 w)\|^2_{L^2(Q_T)} \\ \notag 
&\leq C s^{-1}\iint_{Q_T}\xi^{-1}|P_e^{\epsilon}w|^2\dx\dt+CT^2\iint_{Q_T}s^2\lambda^2\xi^3w^2\dx\dt\\ \notag
&\quad +\frac{C}{\epsilon^2}s^3\iint_{Q_T}\lambda^4\xi^3 w^2\dx\dt \\ \label{eq:est_wxx}
& \leq C s^{-1}\iint_{Q_T}\xi^{-1}|P_e^{\epsilon}w|^2\dx\dt+\frac{C}{\epsilon^2}\iint_{Q_T}s^3\lambda^4\xi^3w^2\dx\dt
\end{align}
where we have used that $\epsilon\leq 1$ and $s\geq CT^2$ in the last line. Arguing in the same way and considering \eqref{eq:p_k}, it is not difficult to see that
\begin{align}\notag
s^{-1}\|\xi^{-1/2}w_t\|^2_{L^2(Q_T)}\leq&\  Cs^{-1}\iint_{Q_T}\xi^{-1}|P_k^{\epsilon}w|^2\dx\dt+\frac{C}{\epsilon^2}\iint_{Q_T}s\lambda^2\xi w_{x}^2\dx\dt \\\label{eq:est_wt}
& +\frac{C}{\epsilon^2}\iint_{Q_T}s\lambda^4\xi w^2\dx\dt.
\end{align}
Using that $s^{-1}\xi^{-1}\leq C$ for all $(t,x)\in Q_T$, we can use \eqref{eq:est_wxx} and \eqref{eq:est_wt} to estimate from below in \eqref{eq:car_ineq_0} and obtain
\begin{align}\notag 
C&\|e^{-s\alpha} P(e^{s\alpha}w)\|_{L^2(Q_T)}^2+\frac{C}{\epsilon^2}\iint_{\omega_{0}(t)\times(0,T)}s^3\lambda^4\xi^3 w^2 \dx\dt+\frac{C}{\epsilon^2}\iint_{\omega_{0}(t)\times(0,T)}s\lambda^2\xi w_x^2 \dx\dt \\ \notag
\geq& \ \frac{1}{\epsilon^2}\int_0^Ts^3\lambda^3\xi^3w^2\dt\Big|_{x=1}+\frac{1}{\epsilon^2}\int_0^Ts^3\lambda^3\xi^3w^2\dt\Big|_{x=0} +\frac{1}{\epsilon^2}\iint_{Q_T}s\lambda^2\xi w_x^2\dx\dt\\ \label{eq:des_car_fm}
& +\frac{1}{\epsilon^2}\iint_{Q_T}s^3\lambda^4\xi^3w^2\dx\dt+ \frac{1}{\epsilon^2}\iint_{Q_T}s^{-1}\xi^{-1}|w_{xx}|^2\dx\dt+\iint_{Q_T}s^{-1}\xi^{-1}|w_{t}|^2\dx\dt.
\end{align}

To conclude the proof, we need to eliminate the local term of $w_x$ in the above equation. For this, consider a function $\zeta\in C^\infty([0,T]\times[0,L])$ verifying 
\begin{equation*}
\begin{cases}
0\leq \zeta \leq 1 &\forall (t,x)\in [0,T]\times[0,L], \\
\zeta(t,x)=1 &\forall t\in [0,T],  \;\; \forall x\in \omega_0(t),\\
\zeta(t,x)=0 &\forall t\in[0,T], \;\; \forall x\in[0,L]\setminus\overline{\omega_1(t)}.
\end{cases}
\end{equation*}
We have
\begin{align*}
\frac{1}{\epsilon^2}\iint_{\omega_0(t)\times(0,T)}s\lambda^2\xi w_{x}^2\dx\dt & \leq \frac{1}{\epsilon^2}\iint_{Q_T}\zeta s\lambda^2\xi w_{x}^2\dx\dt \\
&=-\frac{1}{\epsilon^2}\int_{0}^{T}s\lambda^2\xi\zeta w w_{x}\dt\Big|_{0}^{1}-\frac{1}{\epsilon^2}\iint_{Q_T}s\lambda^2\xi ww_{xx}\zeta\dx\dt \\
&\quad -\frac{1}{\epsilon^2}\iint_{Q_T}s\lambda^2\xi_{x} ww_{x}\zeta\dx\dt-\frac{1}{\epsilon^2}\iint_{Q_T}s\lambda^2\xi ww_{x}\zeta_{x}\dx\dt.
\end{align*}
Using the fact that $w_x=-s\alpha_x w$ on $(0,T)\times\{0,1\}$ together with properties \eqref{eq:deriv_w_0}--\eqref{eq:deriv_w_1} and Cauchy-Schwarz and Young inequalities, we get
\begin{align*}
\frac{C}{\epsilon^2}\iint_{w_0(t)\times(0,T)}s\lambda^2\xi w_x^2\leq& \ \frac{1}{2\epsilon^2}\iint_{Q_T}s^{-1}\xi^{-1}|w_{xx}|^2\dx\dt+\frac{1}{2\epsilon^2}\iint_{Q_T}s\lambda^2\xi w_x^2\dx\dt \\
&+\frac{C}{\epsilon^2}\iint_{\omega_1(t)\times(0,T)}s^3\lambda^4\xi^3w^2\dx\dt.
\end{align*}
Using the above estimate in \eqref{eq:des_car_fm} and recalling the change of variables $w=e^{-s\alpha}\psi$ gives the desired result. This concludes the proof.

\section{Proof of a precise observability inequality from the Carleman estimate}\label{app:obs}
The goal of this part is to prove \Cref{prop:UniformCarlemanTer}.
\begin{proof}
The proof is by now standard and relies on well-known arguments, we follow here the presentation in \cite[Lemma 4.1]{Gue13}. We fix $\lambda, s$ large enough such that the Carleman estimate from \Cref{prop:UniformCarleman} holds and such that we have the following estimate
\begin{equation}
\label{eq:compareweightslambda}
e^{-4s\alpha^\star+2s\widehat{\alpha}}\leq C e^{-s\alpha}\leq C e^{-s\alpha^\star}
\end{equation}
All the following positive constants $C>0$ can now depend on $\lambda,s$.\\
\indent By construction, $\alpha=\beta$ and $\xi=\gamma$ in $(T/2,T)\times (0,1)$, therefore
\begin{align}\notag 
&\int_{T/2}^{T} \int_0^1  \gamma |\phi|^2 e^{-2s \beta} \dx \dt +\int_{T/2}^{T} \int_0^1  \gamma^3 |\psi|^2 e^{-2s \beta} \dx \dt \\ \label{eq:iden_weights}
=& \int_{T/2}^{T} \int_0^1  \xi |\phi|^2 e^{-2s \alpha} \dx \dt+\int_{T/2}^{T} \int_0^1  \xi^3 |\psi|^2 e^{-2s \alpha} \dx \dt
\end{align}
Moreover, from the definition of $\beta^\star$ and $\alpha^\star$, we readily see that $e^{-s\alpha^\star}\leq e^{-s\beta^\star}$. 

From this fact and noting that $\beta$ (resp. $\alpha$) blows up exponentially as $t\to T^{-}$ (resp. $t\to 0^+$ and $t\to T^{-}$) while $\gamma$ (resp. $\xi$) blows up polynomially as $t\to T^{-}$ (resp. $t\to 0^+$ and $t\to T^{-}$), we can use \eqref{eq:iden_weights}, \eqref{eq:compareweightslambda} and our Carleman inequality \eqref{eq:car_Prop} to deduce that
\begin{align}\notag 
\int_{T/2}^{T} \int_0^1&  \gamma |\phi|^2 e^{-2s \beta} \dx \dt  + \int_{T/2}^{T} \int_0^1  \gamma^3 |\psi|^2 e^{-2s \beta} \dx \dt \\ \notag
\leq & \ C(s,\lambda) \Bigg( \iint_{Q_T}e^{-2s\beta}\gamma^3|g_1|^2\dx\dt + \iint_{Q_T}e^{-2s\beta} |g_2|^2\dx\dt \\ \label{eq:estimate_T2_T} 
&\qquad\qquad + \iint_{\omega_2\times(0,T)}  e^{-s\beta^\star}({\gamma}^\star)^{8}|\phi|^2\dx\dt \Bigg) .
\end{align}

For the set $(0,T/2)\times(0,1)$, we will use energy estimates for the system \eqref{eq:linear_adj}. More precisely, consider the function $\nu\in C^1([0,T])$ such that 
\begin{equation}
\nu=1 \text{ in } [0,T/2], \quad \nu=0 \text{ in }[3T/4,T/4], \quad |\nu^\prime(t)|\leq C/T.
\end{equation}
Setting  $(\widetilde \phi,\widetilde \psi)=(\nu\phi,\nu\psi)$, it is not difficult to see that the new variables verify the system
\begin{equation}\label{eq:adj_cut}
\begin{cases}
-\widetilde{\phi}_t=a_{11} \widetilde \phi + a_{21}\widetilde \psi + \nu g_1-\nu^\prime\phi & \text{in } Q_T, \\
-\widetilde \psi_t={d_v}\widetilde \psi_{xx} +a_{12}\widetilde \phi+a_{22} \widetilde \psi +\nu g_2 - \nu^\prime \psi &\text{in } Q_T, \\
\partial_x \widetilde \psi=0 &\text{on } \Sigma_T, \\
(\widetilde \phi,\widetilde \psi)(T,\cdot)=(0,0) &\text{in }(0,1).
\end{cases}
\end{equation}
From standard energy estimates, we deduce that system \eqref{eq:adj_cut} verifies
\begin{align} \notag
\int_{\Omega}&|\widetilde \phi(t)|^2\dx+\int_{\Omega}|\widetilde \psi(t)|^2\dx+{d_v} \int_{t}^{T}\!\!\!\int_{\Omega}|\widetilde \psi_x|^2\dx\dt \\ \notag 
\leq & \ C\left(\int_{t}^{T}\!\!\!\int_{\Omega}|\widetilde \phi|^2\dx\dt+\int_{t}^{T}\!\!\!\int_{\Omega}|\widetilde \psi|^2\dx\dt+\int_{t}^{T}\!\!\!\int_{\Omega}|\eta g_1|^2\dx\dt+\int_{t}^{T}\!\!\!\int_{\Omega}|\eta g_2|^2\dx\dt\right. \\ %\label{eq:ener_cut_system}
&\quad \left. + \int_{t}^{T}\!\!\!\int_{\Omega}|\eta^\prime \phi|^2\dx\dt +\int_{t}^{T}\!\!\!\int_{\Omega}|\eta^\prime \psi|^2\dx\dt  \right), \quad \forall t\in [0,T],\notag
\end{align}
where $C$ is a positive constant only depending on $a_{ij}$. 

Dropping the third term in the left-hand side of the above expression, we use Gronwall's inequality to deduce
\begin{align*}
&\|\widetilde \phi(0)\|^2_{L^2(\Omega)}  + \|\widetilde \psi(0)\|^2_{L^2(\Omega)}  + \iint_{Q_T}|\widetilde \phi|^2 \dx\dt + \iint_{Q_T}|\widetilde \psi|^2\dx\dt \\
& \leq  C\left(\iint_{Q_T} |\eta g_1|^2\dx\dt+\iint_{Q_T}|\eta g_2|^2\dx\dt + \iint_{Q_T}|\eta^\prime \phi|^2 \dx\dt+\iint_{Q_T}|\eta^\prime \psi|^2\dx\dt \right)
\end{align*}
for some constant $C>0$ only depending on $T$ and $a_{ij}$.

Recalling the definition of $\eta$, we obtain from the above expression 
\begin{align*}
\| \phi(0)\|^2_{L^2(\Omega)} & + \| \psi(0)\|^2_{L^2(\Omega)}  + \int_{0}^{T/2}\!\!\!\!\int_{\Omega} | \phi|^2  \dx\dt + \int_{0}^{T/2}\!\!\!\!\int_{\Omega}| \psi|^2\dx\dt \\
& \leq  C\left(\int_{0}^{3T/4}\!\!\!\!\int_{\Omega}| g_1|^2 \dx\dt+\int_{0}^{3T/4}\!\!\!\!\int_{\Omega}| g_2|^2\dx\dt\right) \\
&\quad  + \frac{C}{T^2}\left( \int_{T/2}^{3T/4}\!\!\!\!\int_{\Omega} |\phi|^2 \dx\dt+\int_{T/2}^{3T/4}\!\!\!\!\int_{\Omega}| \psi|^2\dx\dt \right).
\end{align*}
Since the domain of integration in the above integrals is away from the singularity of the weight functions \eqref{eq:weight_cut} at $t=T$ (and therefore they are bounded), we can introduce them in the above inequality as follows
\begin{align*}
&\| \phi(0)\|^2_{L^2(\Omega)}  + \| \psi(0)\|^2_{L^2(\Omega)}  \\
&\quad + \int_{0}^{T/2}\!\!\!\!\int_{\Omega} e^{-2s\beta}  \gamma \left(| \phi|^2 + | \phi_x|^2 \right)\dx\dt + \int_{0}^{T/2}\!\!\!\!\int_{\Omega} \gamma^3 | \psi|^2 e^{-2s\beta}\dx\dt \\
& \leq  C(s,\lambda,T)\left(\int_{0}^{3T/4}\!\!\!\!\int_{\Omega} e^{-2s\beta}   \gamma^3 | g_1|^2 \dx\dt+\int_{0}^{3T/4}\!\!\!\!\int_{\Omega}e^{-2s\beta} | g_2|^2\dx\dt\right) \\
&\quad  + C(s,\lambda,T)\left( \int_{T/2}^{3T/4}\!\!\!\!\int_{\Omega} e^{-2s\beta}\gamma |\phi|^2 \dx\dt +\int_{T/2}^{3T/4}\!\!\!\!\int_{\Omega} e^{-2s\beta}\gamma^3| \psi_2|^2\dx\dt \right).
\end{align*}

Using estimate \eqref{eq:estimate_T2_T} to bound all the terms on the last line of the above inequality and adding up the resulting expression to \eqref{eq:estimate_T2_T} yields
\begin{align*}
&\| \phi(0)\|^2_{L^2(\Omega)} +\| \psi(0)\|^2_{L^2(\Omega)} + \iint_{Q_T} e^{-2s\beta}  \gamma | \phi|^2 \dx\dt + \iint_{Q_T} \gamma^3 | \psi|^2 e^{-2s\beta}\dx\dt \\
& \leq  C\left(\iint_{Q_T} e^{-2s\beta}   \gamma^3 | g_1|^2 \dx\dt+\iint_{Q_T} e^{-2s\beta} | g_2|^2\dx\dt\right) \\
& \quad +C\left(\iint_{\omega_2(t)\times(0,T)}  e^{-s\beta^\star}({\gamma}^\star)^{8}|\phi|^2 \dx\dt\right).
\end{align*}
To conclude, it is enough to use definitions \eqref{eq:weight_cut} in the above inequality. This ends the proof. 
\end{proof}

\section{Some properties of the heat semigroup}
We recall in the next result some well-known facts about the heat semigroup with a diffusion parameter $d_v >0 $ with homogeneous Neumann boundary conditions on the interval $(0,1)$, denoted by $\{e^{t{d_v}\partial_{xx}}\}_{t\geq 0}$. The proof can be found for instance in \cite[Lemma A.1]{MCHKS18}.
\begin{lem}\label{lem:semi_properties}The following properties hold true.
\begin{enumerate}
\item[a.] For every constant $K\in\mathbb R$, we have $e^{t{d_v} \partial_{xx}}K=K$ for all $t\geq 0$.
\item[b.] For every $z_0\in L^2(\Omega)$, there exits a constant $C>0$ only depending on $z_0$ such that for every $t \geq 0$,
\begin{equation}
\label{eq:estisemigroupNeumann}
\norme{e^{t{d_v}\partial_{xx}}\left(z_0-\int_{\Omega}z_0\dx\right)}_{L^2(\Omega)}\leq C e^{-\lambda_1 d_v t} \norme{z_0}_{L^2(\Omega)},
\end{equation}
where $\lambda_1 >0$ is the first positive eigenvalue of the Neumann Laplacian operator $-\partial_{xx}$ on $(0,1)$.
\end{enumerate}
\end{lem}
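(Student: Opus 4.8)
The plan is to diagonalize the Neumann Laplacian and read off both claims from the spectral decomposition. Recall that $A := -\partial_{xx}$ with homogeneous Neumann boundary conditions on $\Omega = (0,1)$ is a nonnegative self-adjoint operator with compact resolvent; hence it has discrete spectrum $0 = \mu_0 < \mu_1 < \mu_2 < \cdots$, namely $\mu_k = (k\pi)^2$, so that $\lambda_1 = \mu_1 = \pi^2$, together with an $L^2(\Omega)$-orthonormal basis of eigenfunctions $(e_k)_{k\geq 0}$ with $e_0 \equiv 1$ and $e_k(x) = \sqrt{2}\cos(k\pi x)$ for $k \geq 1$. Consequently, for every $f \in L^2(\Omega)$ and $t \geq 0$ one has $e^{t{d_v}\partial_{xx}}f = \sum_{k\geq 0} e^{-{d_v}\mu_k t}\langle f, e_k\rangle e_k$, where $\langle \cdot, \cdot\rangle$ denotes the $L^2(\Omega)$-inner product; the series converges in $L^2(\Omega)$ and defines the standard analytic $C_0$-semigroup generated by ${d_v}\partial_{xx}$.

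For part a., it suffices to note that a constant $K \in \R$ is proportional to $e_0$: $\langle K, e_0\rangle = K$ and $\langle K, e_k\rangle = 0$ for all $k \geq 1$ by orthogonality to the constants. Since $\mu_0 = 0$, the above series collapses to $e^{t{d_v}\partial_{xx}}K = K e_0 = K$ for every $t \geq 0$. Equivalently, the constant function is a stationary solution of the heat equation with Neumann data, and one concludes by uniqueness.

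For part b., the key point is that $z_0 - \int_\Omega z_0 \dx$ is $L^2(\Omega)$-orthogonal to $e_0 \equiv 1$, since $\langle z_0 - \int_\Omega z_0\dx, e_0\rangle = \int_\Omega z_0 \dx - \int_\Omega z_0 \dx = 0$. Hence only the modes $k \geq 1$ contribute, and Parseval's identity together with $\mu_k \geq \mu_1 = \lambda_1$ for $k \geq 1$ gives
\[
\norme{e^{t{d_v}\partial_{xx}}\left(z_0 - \int_\Omega z_0 \dx\right)}_{L^2(\Omega)}^2 = \sum_{k\geq 1} e^{-2{d_v}\mu_k t}\,\abs{\langle z_0, e_k\rangle}^2 \leq e^{-2{d_v}\lambda_1 t}\sum_{k\geq 1}\abs{\langle z_0, e_k\rangle}^2 \leq e^{-2{d_v}\lambda_1 t}\norme{z_0}_{L^2(\Omega)}^2 ,
\]
where the last step is Bessel's inequality; taking square roots yields the claimed inequality, in fact with $C = 1$.

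An alternative, more elementary route for part b. --- closer to the energy methods used elsewhere in the paper --- avoids the explicit eigenbasis: set $z(t) := e^{t{d_v}\partial_{xx}}(z_0 - \int_\Omega z_0\dx)$, observe that $\frac{d}{dt}\int_\Omega z\dx = {d_v}\int_\Omega \partial_{xx}z\dx = 0$ by the Neumann condition, so $z(t)$ has zero mean for every $t$; multiply the heat equation by $z$, integrate over $\Omega$, and use the Poincar\'e--Wirtinger inequality $\norme{z}_{L^2(\Omega)}^2 \leq \lambda_1^{-1}\norme{\partial_x z}_{L^2(\Omega)}^2$ to get $\frac{d}{dt}\norme{z}_{L^2(\Omega)}^2 \leq -2{d_v}\lambda_1\norme{z}_{L^2(\Omega)}^2$; Gronwall's lemma then concludes. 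There is no genuine difficulty in this lemma: the only facts requiring a word of justification are the self-adjointness and discreteness of the spectrum of the Neumann Laplacian on an interval, and the identification of $\lambda_1$ as the optimal constant in the Poincar\'e--Wirtinger inequality on $(0,1)$ --- both entirely classical.
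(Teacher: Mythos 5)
Your proof is correct. Note that the paper does not actually prove this lemma: it simply cites \cite[Lemma A.1]{MCHKS18}, so there is no internal argument to compare against. Your spectral decomposition is the standard self-contained proof: the Neumann eigenpairs $e_0\equiv 1$, $e_k(x)=\sqrt{2}\cos(k\pi x)$, $\mu_k=(k\pi)^2$ give part a.\ immediately (the constant is the $\mu_0=0$ mode), and for part b.\ the zero-mean function has no $e_0$ component, so every surviving mode decays at least like $e^{-d_v\lambda_1 t}$ with $\lambda_1=\pi^2$; Bessel's inequality then yields the bound with $C=1$, which is in fact slightly stronger than the statement (the constant need not depend on $z_0$). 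Your alternative energy route via conservation of the mean, the Poincar\'e--Wirtinger inequality with optimal constant $\lambda_1^{-1}$, and Gronwall is equally valid and closer in spirit to the estimates used elsewhere in the paper; the only point deserving a word there is that for $z_0\in L^2(\Omega)$ the identity $\frac{d}{dt}\norme{z(t)}_{L^2}^2=-2d_v\norme{\partial_x z(t)}_{L^2}^2$ is justified for $t>0$ by parabolic smoothing (or by density of smooth data), after which the estimate extends to $t=0$ by continuity --- a standard remark, and one the spectral argument bypasses entirely.
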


%
%Property $a$ is immediate from the properties of the heat semigroup with homogeneous Neumann boundary conditions. 
%
%For deducing property $b$, we use the classical estimate for the heat semigroup with homogeneous Neumann boundary conditions
%%
%\begin{equation}\label{eq:est_semigroup}
%\norme{e^{t{d_v}\partial_{xx}}w_0}_{L^p(\Omega)}\leq C\left(1+(t{d_v})^{-(1/2)(1/q-1/p)}\right)e^{-\lambda_1{d_v} t}\|w_0\|_{L^q(\Omega)},
%\end{equation}
%%
%for all $t>0$, where $\lambda_1$ stands for the first nonzero eigenvalue of $-\partial_{xx}$ and $C>0$ is a constant independent of $t$, ${d_v}$ and $w_0$. Applying \eqref{eq:est_semigroup} with $p=\infty$, $q=2$ and $w_0=z_0-\int_\Omega z_0\dx$, 
%%
%\begin{equation}
%\norme{e^{t{d_v}\partial_{xx}}w_0}_{L^\infty(\Omega)}\leq C\left(1+(t{d_v})^{-1/4}\right)e^{-\lambda_1{d_v} t}\|w_0\|_{L^2(\Omega)},
%\end{equation}
%%
%By multiplying both sides of the above expression by $t^{1/2}$ and setting the change of variables $s=t{d_v}$, we obtain the desired result. 
%\renewcommand{\abstractname}{Acknowledgements}
%\begin{abstract}
%\end{abstract}
%\vspace{-0.5cm}

\bibliographystyle{alpha}
\small{\bibliography{bib_ODE_ODE}}

\end{document}